\theoremstyle{plain}
    \newtheorem{thm}{Theorem}[section]
    \newtheorem{corollary}[thm]{Corollary}
    \newtheorem{example}[thm]{Example}
    \newtheorem{lemma}[thm]{Lemma}
    \newtheorem{proposition}[thm]{Proposition}
    \newtheorem{question}[thm]{Question}
    \newtheorem{theorem}[thm]{Theorem}
\theoremstyle{definition}
    \newtheorem{definition}[thm]{Definition}
    \newtheorem{remark}[thm]{Remark}
\theoremstyle{remark}
\newcommand{\Q}{\mathbb{Q}}
\newcommand{\R}{\mathbb{R}}
\newcommand{\Z}{\mathbb{Z}}
\newcommand{\PER}{\operatorname{Per}}
\newcommand{\alb}{\operatorname{alb}}
\newcommand{\Amp}{\operatorname{Amp}}
\newcommand{\Aut}{\operatorname{Aut}}
\newcommand{\diag}{\operatorname{diag}}
\newcommand{\id}{\operatorname{id}}
\newcommand{\NE}{\overline{\operatorname{NE}}}
\newcommand{\Nef}{\operatorname{Nef}}
\newcommand{\NS}{\operatorname{NS}}
\newcommand{\PE}{\operatorname{PE}}
\newcommand{\PEC}{\operatorname{PEC}}
\newcommand{\Sing}{\operatorname{Sing}}
\newcommand{\Supp}{\operatorname{Supp}}
\newcommand{\Codim}{\operatorname{codim}}
\newcommand{\N}{\operatorname{N}}
\newcommand{\Alb}{\operatorname{Alb}}
\newcommand{\Pic}{\operatorname{Pic}}
\newcommand{\alg}{\mathrm{alg}}
\newcommand{\reg}{\mathrm{reg}}
\begin{document}

\title[Building blocks of polarized endomorphisms]
{Building blocks of polarized endomorphisms of normal projective varieties}

\author{Sheng Meng, De-Qi Zhang}

\address
{
\textsc{Department of Mathematics} \endgraf
\textsc{National University of Singapore, 10 Lower Kent Ridge Road,
Singapore 119076
}}
\email{ms@u.nus.edu}
\address
{
\textsc{Department of Mathematics} \endgraf
\textsc{National University of Singapore, 10 Lower Kent Ridge Road,
Singapore 119076
}}
\email{matzdq@nus.edu.sg}

\begin{abstract}
An endomorphism $f$ of a projective variety $X$ is polarized (resp. quasi-polarized) if
$f^\ast H \sim qH$ (linear equivalence) for some ample (resp. nef and big) Cartier divisor $H$ and integer $q > 1$.
First, we use cone analysis to show that a quasi-polarized endomorphism is always polarized,
and the polarized property descends via any equivariant dominant rational map.
Next, we show that a suitable maximal rationally connected fibration (MRC) can be made
$f$-equivariant using a construction of N.~Nakayama,
that $f$ descends to a polarized endomorphism of the base $Y$ of this MRC
and that this $Y$ is a $Q$-abelian variety (quasi-\'etale quotient
of an abelian variety).
Finally, we show that we can run the minimal model program (MMP) $f$-equivariantly for mildly singular $X$
and reach either a $Q$-abelian variety or a Fano variety of Picard number one.

As a consequence, the building blocks of polarized endomorphisms are those of $Q$-abelian varieties and those of Fano varieties of Picard number one.

Along the way, we show that $f$ always descends to a polarized endomorphism of the Albanese variety $\Alb(X)$ of $X$, and that the pullback of a power of $f$ acts as a scalar multiplication on the Neron-Severi group of $X$ (modulo torsion) when $X$ is smooth and rationally connected.

Partial answers about $X$ being of Calabi-Yau type, or Fano type are also given with an extra primitivity
assumption on $f$ which seems necessary by an example.
\end{abstract}

\subjclass[2010]{
14E30,   
32H50, 
08A35.  
}

\keywords{polarized endomorphism, minimal model program, Q-abelian variety, Fano variety}

\maketitle
\tableofcontents

\section{Introduction}
%
%
%
%

We work over an algebraically closed field $k$ which has characteristic zero, and is uncountable (only used to guarantee the birational invariance of the rational connectedness property).
Let $f$ be a surjective endomorphism of a projective variety $X$. We say
that $f$ is {\it polarized} (resp. {\it quasi-polarized}), if there is an ample (resp. nef and big) Cartier divisor $H$ such that $f^{\ast}H \sim qH$ (linear equivalence) for some integer $q>1$.
If $X$ is a point, then the only trivial endomorphism is polarized by convention.


Let $X$ be a projective variety of dimension $n$. We refer to Definition \ref{def-equ-car} for the numerical equivalence ($\equiv$) of $\R$-Cartier divisors and Definition \ref{def-num-cyc} for the weak numerical equivalence ($\equiv_w$) of $r$-cycles with real coefficients.
Denote by $\N^1(X):=\NS(X) \otimes_{\Z} \mathbb{R}$ for the N\'eron-Severi group $\NS(X)$.
One can also regard $\N^1(X)$ as the quotient vector space of $\R$-Cartier divisors modulo the numerical equivalence; see Definition \ref{def-equ-car}.
Denote by $\N_r(X)$ the quotient vector space of $r$-cycles modulo the weak numerical equivalence.

Suppose further $X$ is normal.
Then the numerical equivalence and the weak numerical equivalence are the same for $\R$-Cartier divisors;
in particular,
the natural map $\N^1(X) \to \N_{n-1}(X)$ is well defined and an injection (cf. Definition \ref{def-num-cyc} and Lemma \ref{nrn1}).
A Weil $\mathbb{R}$-divisor $F$ is said to be {\it big} if $F=A+E$ for some ample $\mathbb{Q}$-Cartier divisor $A\in \N^1(X)$ and pseudo-effective Weil $\mathbb{R}$-divisor $E$; see Definition \ref{def-cones}.

A surjective endomorphism $f:X\to X$ of a projective variety $X$ is a finite morphism.
In fact, $f$ induces an automorphism $f^{\ast}:\N^1(X)\to \N^1(X)$.
So an ample divisor is the pull back of some divisor, which, together with the projection formula, imply the finiteness of $f$.
Suppose further $f^{\ast}H\sim qH$ for some nef and big Cartier divisor $H$ and $q>0$,
then, by taking top self-intersection, the projection formula implies
the relation between $\deg f$ and $q$: $\,\, \deg f=q^{\dim(X)}.$

Now we state our main results.

\begin{proposition}\label{big-polar-THM}(cf.~Proposition \ref{big-polar})  Let $f:X\to X$ be a surjective endomorphism of a projective variety $X$ and $q>0$ a rational number.
Assume one of the following two conditions.
\begin{itemize}
\item[(1)] $f^\ast H \equiv qH$
for some big $\mathbb{R}$-Cartier divisor $H$.
\item[(2)] $X$ is normal and $f^\ast H\equiv_w qH$ for some big Weil $\mathbb{R}$-divisor $H$.
\end{itemize}
Then $q$ is an integer and $f^\ast A\equiv qA$ for some ample Cartier
divisor $A$.
Further, if $q>1$, then $f$ is polarized.
In particular, quasi-polarized endomorphisms are polarized.
\end{proposition}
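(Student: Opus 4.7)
The plan is to use cone analysis on $\N^1(X)$ via Perron–Frobenius together with Hodge-index arguments. I treat case (1) (with $H$ big $\R$-Cartier) first; case (2) reduces to it using that numerical and weak numerical equivalences agree on $\R$-Cartier divisors on a normal variety, and that a big Weil divisor decomposes as an ample $\Q$-Cartier part plus a pseudo-effective summand, producing an $\R$-Cartier representative of the eigenrelation.

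To show $q\in\Z_{>0}$: the operator $f^\ast$ preserves the pseudo-effective cone $\PE(X)\subseteq\N^1(X)$, whose interior is the big cone, and $[H]$ is an interior eigenvector. Perron–Frobenius theory for cone-preserving operators with an interior eigenvector gives $q=\rho(f^\ast)$ (the spectral radius), together with semisimplicity of $q$ and of all spectral-circle eigenvalues. Since $f^\ast$ acts by an integer matrix on $\NS(X)/\text{torsion}$, all its eigenvalues are algebraic integers, so the hypothesis $q\in\Q_{>0}$ forces $q\in\Z_{>0}$.

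To find an ample Cartier $A$ with $f^\ast A\equiv qA$, I work inside the $q$-eigenspace $V_q\subseteq\N^1(X)$. Step (i): produce a nef-and-big class $H_1\in V_q$ by Cesaro-averaging. For an arbitrary ample Cartier $A_0$, the averages $C_N=\frac{1}{N}\sum_{k=0}^{N-1}(f^k)^\ast A_0/q^k$ are bounded by the semisimplicity above, and subsequential limits lie in $V_q\cap\Nef(X)$; as $A_0$ varies over the ample cone, these limits sweep out an open subcone of $V_q$, so $V_q\cap\Nef(X)$ has nonempty relative interior in $V_q$. Adding a small multiple of $[H]\in V_q$ to a class in this relative interior yields a nef-and-big $H_1\in V_q$; in particular $H_1^n>0$, which forces $q^n=\deg f$ by the projection formula. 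Step (ii): upgrade to $(f^m)^\ast\equiv q^m\cdot\id$ on all of $\N^1(X)$ for some $m\geq 1$. Since $H_1\in V_q$, the projection formula shows $T:=f^\ast/q$ preserves the symmetric bilinear form $q_{H_1}(D_1,D_2):=D_1\cdot D_2\cdot H_1^{n-2}$. Hodge index for nef-and-big $H_1$ gives $q_{H_1}$ signature $(1,\rho-1)$ modulo its kernel; $T$ fixes the timelike class $[H_1]$, so it acts as a real orthogonal isometry on the negative-definite orthogonal complement. Eigenvalues of $T$ are algebraic (integrality of $f^\ast$) and of modulus one there, hence roots of unity by Kronecker, giving $T^m=\id$ on $\N^1/\ker q_{H_1}$. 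A further cone-theoretic argument on the $f^\ast$-invariant kernel (which corresponds to the null locus of $H_1$) promotes this to $T^m=\id$ on all of $\N^1(X)$. Finally, the explicit divisor $B:=\sum_{k=0}^{m-1}q^{m-1-k}(f^k)^\ast A_0$ is a positive-integer combination of pullbacks of an ample Cartier, hence itself ample Cartier, and a direct computation using $(f^m)^\ast A_0\equiv q^m A_0$ gives $f^\ast B\equiv qB$.

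The main obstacle is step (ii), specifically controlling $T$ on $\ker q_{H_1}$ when $H_1$ is merely nef-and-big and not ample; abstract cone dynamics alone is insufficient, and one must exploit the $f^\ast$-invariance of the null locus of $H_1$ and its intersection-theoretic positivity. For the final upgrade from numerical to linear equivalence when $q>1$: after passing to a multiple of $A$ to kill torsion in $\Pic^\tau(X)/\Pic^0(X)$, set $N:=f^\ast A-qA\in\Pic^0(X)$ and solve $(f^\ast-q\cdot\id)M=-N$ in $\Pic^0(X)$---possible because eigenvalues of $f^\ast$ on $\Pic^0(X)$ have modulus at most $\sqrt{q}<q$ by Weil-type estimates on abelian varieties. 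Replacing $A$ by $A+M$ gives $f^\ast A'\sim qA'$ with $A'$ ample Cartier, establishing that $f$ is polarized.
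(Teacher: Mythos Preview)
Your Step~(ii) contains a genuine error, and its conclusion is in fact false. Kronecker's theorem applies to algebraic \emph{integers} all of whose conjugates lie on the unit circle; the eigenvalues of $T=f^\ast/q$ are of the form $\lambda/q$ with $\lambda$ an algebraic integer, but dividing by $q$ destroys integrality. Concretely, the paper's Example~\ref{ex1} exhibits a polarized endomorphism $f$ of an abelian surface $Z$ with $\rho(Z)=3$ for which $(f^i)^\ast|_{\N^1(Z)}$ is \emph{never} a scalar multiplication: the non-real eigenvalues of $f^\ast/6$ are $(-2\pm i\sqrt{5})/3$, which have modulus~$1$ but are not roots of unity. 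So your assertion $(f^m)^\ast\equiv q^m\cdot\id$ fails in general, and with it your construction of the ample Cartier divisor $B$. Your Hodge-index argument correctly shows $T$ acts orthogonally on $[H_1]^\perp/\ker q_{H_1}$, but an orthogonal operator with algebraic (non-integer) eigenvalues need not have finite order.

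The paper's route bypasses this entirely. Its Proposition~\ref{cone} establishes the equivalence: $f(x)=qx$ for some $x$ in the \emph{interior} of an $f^{\pm1}$-invariant closed salient cone $C$ spanning $V$ $\Longleftrightarrow$ $\|f^i\|/q^i$ is bounded for all $i\in\Z$. The point is that the right-hand side is intrinsic to $f$, so once you have an interior eigenvector in \emph{one} invariant cone (here $\PEC(X)$, with $[H]$ big, i.e.\ interior), you immediately get one in \emph{any other} (here $\Nef(X)$, whose interior is $\Amp(X)$). The nontrivial implication $(2)\Rightarrow(1)$ is proved via Brouwer's fixed-point theorem together with a careful analysis of extremal faces (Lemmas~\ref{face} and~\ref{dis}); this is precisely the step your argument is missing, as your Step~(i) only reaches a nef-and-big class, not an ample one. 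Your reduction of case~(2) to case~(1) is also not right: the eigenrelation $f^\ast H\equiv_w qH$ lives in $\N_{n-1}(X)$, and writing $H=A+E$ does not produce an $\R$-Cartier divisor satisfying the same eigenrelation. The paper instead applies Proposition~\ref{cone} directly to $\PE(X)\subset\N_{n-1}(X)$, obtains the norm bound there, and restricts it to the subspace $\N^1(X)\hookrightarrow\N_{n-1}(X)$.
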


Given a normal projective variety $X$, denote by $\Aut(X)$ the full automorphism group of $X$ and $\Aut_0(X)$ its neutral connected component.
Let $B$ be a Cartier divisor.
Denote by $\Aut_{[B]}(X):=\{g\in \Aut(X)\,|\, g^\ast B\equiv B\}$.
If $X$ is smooth and $B$ is ample, then $[\Aut_{[B]}(X):\Aut_0(X)]<\infty$ by \cite[Proposition 2.2]{Li}.
Generally, let $G$ be a subgroup of $\Aut(X)$, such that for any $g\in G$, $g^\ast B_g \equiv B_g$ for some big Cartier divisor $B_g$.
Then $[G:G\cap \Aut_0(X)]<\infty]$ by \cite[Theorem 2.1]{DHZ}.
Now applying Proposition \ref{big-polar-THM}, we can further generalize this result to the following.

\begin{theorem}(cf.~Theorem \ref{aut}) Let $X$ be a normal projective variety. Let $G$ be a subgroup of $\Aut(X)$, such that for any $g\in G$, $g^\ast B_g \equiv_w B_g$ for some big Weil $\mathbb{R}$-divisor $B_g$.  Then $[G:G\cap \Aut_0(X)]<\infty$.
\end{theorem}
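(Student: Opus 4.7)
The plan is to reduce the statement to the Cartier-divisor version already handled by \cite[Theorem 2.1]{DHZ}, by applying Proposition \ref{big-polar-THM} pointwise on $G$ to promote the weak numerical equivalence of a big Weil $\mathbb{R}$-divisor to a numerical equivalence of an honest ample Cartier divisor.

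Concretely, fix $g \in G$. Since $X$ is normal and $B_g$ is a big Weil $\mathbb{R}$-divisor, the relation $g^\ast B_g \equiv_w B_g$ places the triple $(g, B_g, q=1)$ into case (2) of Proposition \ref{big-polar-THM}. The value $q=1$ is a positive rational number, as required by the hypothesis of that proposition, and the conclusion then produces an ample Cartier divisor $A_g$ on $X$ with $g^\ast A_g \equiv A_g$. In particular, $A_g$ is a \emph{big Cartier} divisor fixed by $g^\ast$ up to numerical equivalence, so the family $\{A_g\}_{g \in G}$ satisfies the hypothesis of \cite[Theorem 2.1]{DHZ}. Invoking that theorem gives $[G : G \cap \Aut_0(X)] < \infty$.

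The main obstacle has effectively been resolved upstream, in Proposition \ref{big-polar-THM}, whose cone-analytic argument is what actually converts the weak-numerical/Weil hypothesis into the numerical/ample-Cartier one. Given this, the present theorem is essentially a corollary. The only fine point worth checking is that Proposition \ref{big-polar-THM} applies with $q=1$: its hypothesis only asks for $q > 0$ rational, and its first conclusion ("$q$ is an integer and $f^\ast A \equiv qA$ for some ample Cartier $A$") holds in that range, so no gap arises from the automorphism case where the degree formula forces $q^{\dim X} = \deg g = 1$.
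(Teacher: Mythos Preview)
Your argument follows the same route as the paper: use Proposition~\ref{big-polar-THM} (case~(2), $q=1$) to upgrade each $g^\ast B_g \equiv_w B_g$ to $g^\ast A_g \equiv A_g$ for an ample Cartier divisor $A_g$, and then appeal to \cite[Theorem~2.1]{DHZ}. The one technical point you skip, and which the paper handles, is that \cite[Theorem~2.1]{DHZ} is formulated for compact K\"ahler manifolds, hence for \emph{smooth} $X$. The paper therefore first passes to an $\Aut(X)$-equivariant projective resolution $\pi : X' \to X$, identifies $\Aut_0(X')$ with $\Aut_0(X)$ via \cite[Proposition~2.1]{Br11}, pulls back the ample $A_g$ on $X$ to a big and nef class $\pi^\ast A_g$ on the smooth $X'$, and only then invokes \cite{DHZ} on $X'$. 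Once you insert this resolution step, your proof matches the paper's.
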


The polarized property descends via any equivariant dominant rational map. Indeed, we prove:

\begin{theorem}\label{descend-thm}(cf.~Theorem \ref{desends-polar}) Let $\pi:X\dashrightarrow Y$ be a dominant rational map between two projective varieties and let $f:X\to X$ and $g:Y\to Y$ be two surjective endomorphisms such that $g\circ\pi=\pi\circ f$.
Suppose $f$ is polarized. Then $g$ is polarized; and  $(\deg f)^{\dim(Y)}=(\deg g)^{\dim(X)}$.
\end{theorem}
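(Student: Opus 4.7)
The plan is first to make $\pi$ a morphism: let $W$ be the normalization of the closure of the graph of $\pi$ in $X\times Y$. The relation $g\circ\pi=\pi\circ f$ forces the endomorphism $f\times g$ of $X\times Y$ to preserve this graph and lift to a surjective endomorphism $F:W\to W$, equivariant over $f$ via the birational projection $p:W\to X$ and over $g$ via the dominant morphism $q:W\to Y$. Since $f$ is polarized with $f^{\ast}H\sim qH$ for ample Cartier $H$ on $X$ and integer $q>1$, one gets $F^{\ast}p^{\ast}H\sim q\cdot p^{\ast}H$ with $p^{\ast}H$ nef and big. Proposition \ref{big-polar-THM}(1) then upgrades $F$ to a polarized endomorphism: there is an ample Cartier divisor $A_W$ on $W$ with $F^{\ast}A_W\equiv qA_W$, and $\deg F=q^n$ where $n=\dim W=\dim X$.

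Set $d=\dim Y$ and $e=n-d$; assume $Y$ is normal (replacing $Y$ by its normalization if necessary, as the endomorphism $g$ lifts there by functoriality). Form the $(d-1)$-cycle $\alpha:=A_W^{e+1}\in\N_{d-1}(W)$, for which $F^{\ast}\alpha\equiv_w q^{e+1}\alpha$. I would establish the numerical base-change identity
\[
q_{\ast}\circ F^{\ast}\;\equiv_w\;(\deg F/\deg g)\cdot g^{\ast}\circ q_{\ast}
\]
as follows: for any test class $\gamma$ on $Y$ one has $\int_Y q_{\ast}F^{\ast}\alpha\cdot\gamma=\int_W F^{\ast}\alpha\cdot q^{\ast}\gamma=\int_W\alpha\cdot F_{\ast}q^{\ast}\gamma$ by the projection formula for the finite morphism $F$; then $F^{\ast}q^{\ast}=q^{\ast}g^{\ast}$ together with $g_{\ast}g^{\ast}=(\deg g)\id$ gives $F_{\ast}q^{\ast}=(\deg F/\deg g)\,q^{\ast}g_{\ast}$; finally $\int_W\alpha\cdot q^{\ast}g_{\ast}\gamma=\int_Y q_{\ast}\alpha\cdot g_{\ast}\gamma=\int_Y g^{\ast}q_{\ast}\alpha\cdot\gamma$ by a second projection formula. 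Applied to $\alpha=A_W^{e+1}$ this yields
\[
g^{\ast}(q_{\ast}\alpha)\equiv_w c\cdot q_{\ast}\alpha,\qquad c:=q^{e+1}\deg g/\deg F .
\]

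Next I would verify that $q_{\ast}\alpha=q_{\ast}(A_W^{e+1})$ is a big Weil $\mathbb{R}$-divisor on $Y$. Fix an ample Cartier $B$ on $Y$; for sufficiently small $\varepsilon>0$ the class $A_W-\varepsilon q^{\ast}B$ remains ample on $W$, so $(A_W-\varepsilon q^{\ast}B)^{e+1}$ is pseudo-effective (indeed effective up to a positive multiple), and its $q$-pushforward is a pseudo-effective Weil $\mathbb{R}$-divisor on $Y$. Expanding the binomial and using $q_{\ast}(q^{\ast}B^k\cdot\beta)=B^k\cdot q_{\ast}\beta$, observe that $q_{\ast}(A_W^{e+1-k})$ has cycle dimension $d-1+k>d$ on $Y$ for $k\geq 2$ and therefore vanishes; only the $k=0,1$ terms survive. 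Writing $q_{\ast}(A_W^e)=c_e[Y]\in\N_d(Y)$ with $c_e=A_W^e\cdot[q^{-1}(y)]>0$ (ample self-intersection on a general fibre), one obtains
\[
q_{\ast}\bigl((A_W-\varepsilon q^{\ast}B)^{e+1}\bigr)\;=\;q_{\ast}(A_W^{e+1})-\varepsilon(e+1)c_e\,B ,
\]
so $q_{\ast}(A_W^{e+1})=\varepsilon(e+1)c_e\,B+E$ for some pseudo-effective Weil $\mathbb{R}$-divisor $E$, i.e.\ $q_{\ast}\alpha$ is big.

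Now Proposition \ref{big-polar-THM}(2) applies to $g$ and $q_{\ast}\alpha$: the scalar $c$ is a positive integer and there is an ample Cartier $A_Y$ on $Y$ with $g^{\ast}A_Y\equiv cA_Y$, giving $\deg g=c^d$. Substituting into $c=q^{e+1}\deg g/\deg F=q^{e+1}c^d/q^n$ yields $c^{d-1}=q^{d-1}$, whence $c=q>1$, so $g$ is polarized. The degree equality follows: $(\deg g)^n=(q^d)^n=(q^n)^d=(\deg f)^d$. The most delicate step is the numerical base-change identity — an alternative derivation is via the Cartesian square $W\times_{g,Y}Y\to W$ with the section $(F,q)$ of degree $\deg F/\deg g$ — but otherwise the argument is a chain of projection-formula computations combined with the already-established Proposition \ref{big-polar-THM}.
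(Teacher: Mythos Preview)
Your push-forward construction is correct and interesting for $\dim Y\ge 2$, but there is a genuine gap when $\dim Y=1$. In that case $d-1=0$, so the relation $c^{d-1}=q^{d-1}$ is the tautology $1=1$ and does not determine $c$. Concretely, on a curve $Y$ the space $\N_{0}(Y)$ is one-dimensional (a $0$-cycle is weakly numerically determined by its degree) and $g^{\ast}$ always acts on it by multiplication by $\deg g$; your eigenvalue identity therefore collapses to $c=\deg g$, which neither shows $c>1$ nor that $c=q$. A patch is easy---for instance, $q^{\ast}B$ is a nonzero class in $\N^{1}(W)$ with $F^{\ast}(q^{\ast}B)=q^{\ast}(g^{\ast}B)\equiv(\deg g)\,q^{\ast}B$, and since $F$ is polarized every eigenvalue of $F^{\ast}|_{\N^{1}(W)}$ has modulus $q$, forcing $\deg g=q$---but note that this patch is already the paper's entire argument.

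The paper's proof works uniformly by pulling back rather than pushing forward: after reducing to a surjective morphism $\pi$, the injection $\pi^{\ast}:\N^{1}(Y)\hookrightarrow\N^{1}(X)$ intertwines $g^{\ast}$ with $f^{\ast}$; since $f$ polarized means $f^{\ast}|_{\N^{1}(X)}$ satisfies the two-sided norm bound of Proposition~\ref{cone}(2), so does $g^{\ast}|_{\N^{1}(Y)}$, and Proposition~\ref{cone}(1) applied to $\Nef(Y)$ then produces an ample eigenvector with eigenvalue exactly $q>1$. This avoids all the cycle-pushforward bookkeeping and the separate treatment of small $d$. A minor further point: your reduction to $Y$ normal (needed to invoke Proposition~\ref{big-polar-THM}(2)) presupposes that polarizedness descends along the finite map $\tilde Y\to Y$, which is itself an instance of the theorem you are proving; the cleanest way out is again the pull-back argument above.
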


Given a projective variety $X$, pick any smooth model $p:X'\to X$, we define the Albanese map $\alb_{X}$ of $X$
as $\alb_{X'}\circ p^{-1}$:
$$X \overset{p^{-1}}{\dashrightarrow} X' \overset{\alb_{X'}}{\longrightarrow} \Alb(X') =: \Alb(X) .$$
Clearly, $\alb_X$ and $\Alb(X)$
are independent of the choice of $X'$. By the universal property of the Albanese map,
any surjective endomorphism (or even dominant rational self-map) $f$ of $X$ descends to a surjective endomorphism $f|_{\Alb(X)}$ of $\Alb(X)$.
The following Corollary \ref{mainthm} is an application of Theorem \ref{descend-thm}.

\begin{corollary}\label{mainthm} Let $X$ be a projective variety with a polarized endomorphism $f:X\to X$ and
let $\alb_X:X\dashrightarrow \Alb(X)$ be the Albanese map of $X$.
Then the following are true.
\begin{itemize}
\item[(1)] $\alb_X$ is a dominant rational map.
\item[(2)]The endomorphism $f|_{\Alb(X)}$ of $\Alb(X)$ induced from $f$ is polarized.
\end{itemize}
\end{corollary}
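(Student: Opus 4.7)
The plan is to deduce (2) directly from (1) via Theorem \ref{descend-thm}, and to establish (1) by a stabilizer-quotient argument that forces the Albanese image to be of general type (in the sense of Ueno) unless it coincides with the whole Albanese variety.

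Set $A := \Alb(X)$ and let $Y := \overline{\alb_X(X)} \subseteq A$. The universal property of the Albanese provides an endomorphism $f|_A : A \to A$ satisfying $f|_A \circ \alb_X = \alb_X \circ f$. Since the Albanese image generates $A$ as a group, and $f|_A(A)$ is a translate of an abelian subvariety containing $Y$, $f|_A$ is forced to be surjective; thus $f|_A = t_b \circ \phi_0$ for an isogeny $\phi_0$ of $A$ and a translation $t_b$. From the surjectivity of $f$ one obtains $f|_A(Y) = Y$, so $g := f|_A|_Y : Y \to Y$ is a surjective endomorphism, and $\alb_X : X \dashrightarrow Y$ is a dominant $(f,g)$-equivariant rational map. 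By Theorem \ref{descend-thm}, $g$ is polarized, and the degree identity $(\deg f)^{\dim Y} = (\deg g)^{\dim X}$ in Theorem \ref{descend-thm} pins the polarization integer for $g$ to be the same $q > 1$ as for $f$.

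To prove $Y = A$, let $B \subseteq A$ be the identity component of the translation stabilizer $\{a \in A : a + Y = Y\}$, an abelian subvariety of $A$. A short calculation using the identity $f|_A(h + y) = \phi_0(h) + f|_A(y)$ shows $\phi_0(B) \subseteq B$, so $f|_A$ descends to an endomorphism $\bar f$ of $A/B$ preserving $W := Y/B \subseteq A/B$. The projection $Y \to W$ is dominant and equivariant between $g$ and $\bar g := \bar f|_W$, so a second application of Theorem \ref{descend-thm} yields that $\bar g$ is polarized with the same $q > 1$; in particular $\deg \bar g = q^{\dim W}$. By construction $W$ has trivial translation stabilizer in $A/B$, so by Ueno's theorem $W$ is either a point or a variety of general type. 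Since any variety of general type admits no surjective self-morphism of degree $> 1$ (such a map must be an automorphism, by a classical theorem), we are forced to have $\dim W = 0$. Hence $Y$ is a single translate of $B$, and since $Y$ generates $A$ we conclude $B = A$, i.e., $Y = A$, proving (1). With (1) in hand, $\alb_X : X \dashrightarrow \Alb(X)$ is a dominant $(f, f|_{\Alb(X)})$-equivariant rational map, so a final application of Theorem \ref{descend-thm} yields (2).

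The main obstacle is the Ueno step: one must invoke Ueno's classification of subvarieties of abelian varieties with trivial translation stabilizer (they are of general type) together with the classical fact that surjective self-morphisms of varieties of general type are automorphisms, in order to rule out $Y \subsetneq A$. Everything else is essentially bookkeeping around Theorem \ref{descend-thm} and elementary properties of morphisms of abelian varieties.
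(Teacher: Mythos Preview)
Your proof is correct and takes a genuinely different route from the paper's.

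The paper proves (1) by first passing to the normalized graph of $\alb_X$ (Lemmas \ref{lem-lift-graph}, \ref{lem-lift-normal}), then descending along Nakayama's special MRC fibration (Lemmas \ref{smrc}, \ref{albmrc}) to reduce to the non-uniruled case, and finally invoking the deep structure result Lemma \ref{struclem} (which rests on \cite{GKP} and \cite{Na-Zh}) together with Kawamata's theorem \cite{Ka81} to conclude that the Albanese map is surjective. Your argument bypasses all of this: you work directly inside the Albanese variety, quotient the image $Y$ by the identity component $B$ of its translation stabilizer, and use Ueno's theorem to see that $W = Y/B$ is either a point or of general type; the latter is ruled out because Theorem \ref{desends-polar} forces a self-morphism of degree $q^{\dim W} > 1$ on $W$, which a variety of general type cannot admit (e.g.\ by a volume comparison for $K_W$). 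This is more elementary and self-contained, needing only Theorem \ref{desends-polar}, Ueno's theorem, and the classical fact about self-maps of varieties of general type, whereas the paper's route feeds into and foreshadows the MRC and $Q$-abelian machinery used later.

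One small imprecision: after quotienting by $B$, the translation stabilizer of $W$ in $A/B$ need only be \emph{finite}, not literally trivial (it is the image of $\mathrm{Stab}(Y)/B$). This is harmless, since Ueno's theorem requires only that the \emph{identity component} of the stabilizer be trivial to conclude $\kappa(W) = \dim W$. Also, your sentence ``$f|_A(A)$ is a translate of an abelian subvariety containing $Y$'' deserves a word of justification: since $f|_A(Y) = Y$ (from $f|_A \circ \alb_X = \alb_X \circ f$ and closedness of images), one has $Y \subseteq f|_A(A)$, and then the generation property of $Y$ forces $f|_A(A) = A$.
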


\begin{remark}\label{main-que-rmk} Corollary \ref{mainthm} affirmatively answers Krieger - Reschke \cite[Question 1.10]{Kr}.
\end{remark}

We refer to \cite[Chapters 2 and 5]{KM} for the definitions and the properties of log canonical (lc), Kawamata log terminal (klt), canonical and terminal singularities.
A normal projective variety $Y$ is $Q$-{\it abelian} if there exists a finite surjective morphism $A\to Y$ \'etale in codimension one
(or {\it quasi-\'etale} in short) with $A$ an abelian variety. By the ramification divisor formula, $K_Y\sim_{\mathbb{Q}} 0$.

Let $V$ be a projective variety of dimension $n$. $V$ is said to be {\it uniruled} if there is a dominant rational map $\mathbb{P}^1\times U\dashrightarrow X$ with $\dim(U)=n-1$.
$V$ is said to be {\it rationally connected}, in the sense of Campana and Kollar-Miyaoka-Mori,
(\cite{Ca}, \cite{KMM}),
if any two points of $V$ are connected by a rational curve, which is equivalent to saying that two general points of $V$ are connected by a rational curve since our ground field is uncountable (see \cite{Ko}).
Given a uniruled projective variety $X$,
there is a fibration: $\pi:X\dashrightarrow Y$, such that $Y$ is a non-uniruled normal projective variety
(cf.~\cite{GHS}) and the graph of $\pi$ over $Y$ has the general fibre rationally connected.
We call it an {\it MRC} (maximal rationally connected) fibration in the sense of
Campana and Kollar-Miyaoka-Mori and this fibration is unique up to birational equivalence (cf.~ \cite{Ko}).
The Albanese map of $X$ always factors through the MRC fibration; see Lemma \ref{albmrc}.

However, in general, fixing one MRC fibration,
a surjective endomorphism of $X$ descends only to a dominant rational self-map of $Y$.
Nevertheless, we have the next result.

\begin{proposition}\label{PropA} Let $X$ be a normal projective variety with a polarized endomorphism $f:X\to X$.
Then there is a special MRC fibration $\pi:X\dashrightarrow Y$ in the sense of Nakayama \cite{Na10}
(which is the identity map when $X$ is non-uniruled)
together with a (well-defined) surjective endomorphism $g$ of $Y$, such that the following are true.
\begin{itemize}
\item[(1)]
$g\circ \pi=\pi\circ f$; $g$ is polarized.
\item[(2)]
$Y$ is $Q$-abelian (with only canonical singularities). Hence there is a finite surjective morphism $T \to Y$ \'etale in codimension one with $T$ an abelian variety
and $g$ lifts to a polarized endomorphism $g_T$ of $T$.
\item[(3)]
Let $\bar{\Gamma}_{X/Y}$ be the normalization of the graph of $\pi$. Then the induced morphism $\bar{\Gamma}_{X/Y}\to Y$ is equi-dimensional
with each fibre (irreducible) rationally connected.
\item[(4)]
If $X$ has only klt singularities, then $\pi$ is a morphism.
\end{itemize}
\end{proposition}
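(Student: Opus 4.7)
The plan is to assemble the statement from three ingredients: Nakayama's equivariant construction of a special MRC fibration, the descent result (Theorem \ref{descend-thm}), and the characterization of non-uniruled normal projective varieties carrying a polarized endomorphism as being $Q$-abelian. I start by invoking Nakayama's construction in \cite{Na10}: applied to our polarized $f$, it produces a normal projective base $Y$ and a rational map $\pi:X\dashrightarrow Y$ such that $Y$ is non-uniruled, the normalization $\bar{\Gamma}_{X/Y}$ of the graph is equidimensional over $Y$ with irreducible rationally connected general fiber, and $f$ descends to an a priori rational self-map $g$ of $Y$ with $g\circ\pi=\pi\circ f$. Item (3) drops out for free; when $X$ is non-uniruled we take $Y=X$ and $\pi=\id$.

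Next I would upgrade $g$ to a morphism. Finiteness of $f$ forbids it from contracting any divisor, and the equidimensionality of $\bar{\Gamma}_{X/Y}\to Y$ implies that any indeterminacy of $g$ would pull back through $\pi$ to a divisor contracted by $f$; hence $g$ is an everywhere defined surjective endomorphism. Theorem \ref{descend-thm} applied to $\pi$, $f$ and $g$ then yields that $g$ is polarized and $(\deg f)^{\dim Y}=(\deg g)^{\dim X}$, settling (1). For (2), I exploit that $Y$ is non-uniruled so that $K_Y$ is pseudo-effective. Combining the ramification formula $g^\ast K_Y = K_Y + R_g$ with $R_g\geq 0$, the equality $g^\ast H\sim qH$ with $q>1$, and the projection formula applied to appropriate intersection numbers $H^{\dim Y -1}\cdot R_g$, one forces $R_g\equiv 0$; thus $g$ is quasi-\'etale and $K_Y\equiv 0$. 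Kawamata's characterization then furnishes a quasi-\'etale finite cover $T\to Y$ with $T$ an abelian variety, and after replacing $g$ by a suitable iterate $g$ lifts to a surjective endomorphism $g_T$ of $T$ by the standard Galois-theoretic argument on $\pi_1^{\text{\'et}}$ of the smooth loci; polarizedness of $g_T$ is inherited by pulling back an ample polarization through $T\to Y$. That $Y$ has only canonical singularities is then immediate from the abelian cover together with the ramification divisor formula.

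For (4), the klt hypothesis on $X$ forces Nakayama's special MRC fibration to be an honest morphism, since an indeterminacy of $\pi$ would have to come from an exceptional divisor whose discrepancy violates the klt bound on $X$ (cf.~Nakayama's treatment in \cite{Na10}). The step I expect to be delicate is the argument in (2) that $g$ is quasi-\'etale on the possibly singular base $Y$: extracting $R_g\equiv 0$ from the polarization and pseudo-effectivity of $K_Y$ requires careful control of weak numerical equivalence classes of Weil $\R$-divisors on $Y$ together with a spectral bound for $g^\ast$ on the pseudo-effective cone, and it is here that Proposition \ref{big-polar-THM} (allowing us to replace numerical statements by genuine linear equivalences and rational eigenvalues) plays a key role. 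Once the vanishing of $R_g$ is in place, Kawamata's theorem, Corollary \ref{mainthm}, and the equivariant descent results developed earlier in the paper combine to finish (2).
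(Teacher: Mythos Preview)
Your strategy for (1)---Nakayama's construction plus Theorem~\ref{descend-thm}---matches the paper. The remaining parts, however, have genuine gaps, and the logical order is inverted.

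For (3): you claim equidimensionality of $\bar{\Gamma}_{X/Y}\to Y$ and rational connectedness of \emph{every} fibre ``drop out for free'' from Nakayama's construction. They do not. Nakayama's special MRC gives only that the \emph{general} fibre is rationally connected, and it is not a priori equidimensional. The paper obtains (3) only \emph{after} proving (2): once $Y$ is $Q$-abelian and $g$ is polarized, Lemma~\ref{fibres-rc+irr} applies, the point being that the locus in $Y$ where a fibre is reducible, has excess dimension, or fails to be rationally connected is $g^{-1}$-invariant and hence empty by Lemma~\ref{qabelian-nosub}. So (3) depends on (2), not the other way round.

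For (2): your outline (ramification formula---which incidentally reads $K_Y=g^\ast K_Y+R_g$, not as you wrote---plus pseudo-effectivity of $K_Y$ to force $R_g\equiv 0$ and $K_Y\equiv 0$) is the right spirit, but ``Kawamata's characterization'' applies to smooth varieties. Here $Y$ is merely normal, with $K_Y$ not known to be $\Q$-Cartier a priori. The paper's Lemma~\ref{struclem} invokes \cite[Theorem~3.2]{Na-Zh} to get canonical singularities and $K_Y\sim_{\Q}0$, then \cite[Theorem~1.21]{GKP} for the $Q$-abelian structure, and \cite[Proposition~3.5]{Na-Zh} to lift $g$ (no iterate needed). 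Your proposal does not supply substitutes for these singular-variety inputs.

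For (4): the discrepancy argument you sketch is not how this goes. The paper's Lemma~\ref{mor-q-abelian} uses (2) and (3): pull back the abelian cover $A\to Y$ along the equidimensional $\bar{\Gamma}_{X/Y}\to Y$ to obtain a quasi-\'etale cover $X'\to X$; since $X$ is klt so is $X'$, hence $X'$ has rational singularities, hence its Albanese map to $A$ is a morphism (Lemma~\ref{alb-morphism}), which forces $\pi$ itself to be everywhere defined. The klt hypothesis enters via rational singularities, not via a discrepancy bound on hypothetical exceptional divisors of $\pi$.
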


\begin{remark}\label{main-rmk-dense}
(1) By N.~Fakhruddin (cf.~\cite{Fa}), the set of $g$-periodic points $\PER(Y,g):=\{y\in Y\,|\,g^s(y)=y\  \text{for some}\  s>0\}$ is Zariski dense in $Y$.
Thus the fibre $\bar{\Gamma}_y$ of the normalization of the graph of $\pi$ over each $y\in \PER(Y,g)$ is rationally connected and admits a polarized endomorphism.

(2) By virtue of Proposition \ref{PropA}, the building blocks of polarized endomorphisms are those on $Q$-abelian varieties and those on rationally connected varieties.
\end{remark}

In view of Remark \ref{main-rmk-dense}, we still need to consider an arbitrary polarized endomorphism $f$  of
a rationally connected variety $X$.
If $X$ is mildly singular, we can run
the minimal model program (MMP) and reach a Fano fibration provided that $K_X$ is not pseudo-effective, and continue the MMP for the
base of the Fano fibration and so on.
Now the problem is that we need to guarantee the equivariance of the MMP with respect to
$f$ (or its positive power), i.e., to make sure that the extremal rays contracted during the MMP are fixed by $f$ or its positive power.
This is not easy, because there might be infinitely many extremal rays, being divisorial type, or flip type, or Fano type.

In Theorem \ref{scalarthm},
we manage to show the equivariance of the MMP with respect to a positive power of $f$, generalizing
results in \cite{Zh-comp} for lower dimensions to all dimensions.
This is done in Section \ref{MMP}.

For a surjective endomorphism $f:X\to X$ of a projective variety $X$,
we say that $f^\ast|_{\N^1(X)}$ is a {\it scalar multiplication} if there is some $q\in \R$ such that $f^*x=qx$ for all $x\in \N^1(X)$.
When $\alb_X$ is a surjective morphism,
if $f^\ast|_{\N^1(X)}$ is a scalar multiplication, then so is $(f|_{\Alb(X)})^{\ast}|_{\N^1(\Alb(X))}$.
The converse may not hold even if we assume $f$ to be polarized and replace $f$ by any positive power; see Example \ref{ex2}. Nevertheless, we have Theorem \ref{scalarthm} (4) below.

\begin{theorem}\label{scalarthm}
Let $f:X\to X$ be a polarized endomorphism of a $\mathbb{Q}$-factorial klt projective variety $X$.
Then, replacing $f$ by a positive power, there exist a $Q$-abelian variety $Y$, a morphism $X\to Y$, and an $f$-equivariant relative MMP over $Y$ $$X=X_1\dashrightarrow \cdots \dashrightarrow X_i \dashrightarrow \cdots \dashrightarrow X_r=Y$$ (i.e. $f=f_1$ descends to $f_i$ on each $X_i$), with every $X_i \dashrightarrow X_{i+1}$ a divisorial contraction, a flip or a Fano contraction, of a $K_{X_i}$-negative extremal ray, such that we have:
\begin{itemize}
\item[(1)]
If $K_X$ is pseudo-effective, then $X=Y$ and it is $Q$-abelian
(see Proposition  \ref{PropA} or Lemma \ref{struccor} for the lifting of $f$).
\item[(2)]
If $K_X$ is not pseudo-effective, then for each $i$, $X_i\to Y$ is equi-dimensional holomorphic with every fibre (irreducible) rationally connected and $f_i$ is polarized by some ample Cartier divisor $H_i$.
The $X_{r-1}\to X_r = Y$ is a Fano contraction.
\item[(3)]
$\N^1(X)$ is spanned by the pullbacks of $\N^1(Y)$ and those $\{H_i\}_{i<r}$
which are $f^\ast$-eigenvectors corresponding to the same eigenvalue $q=(\deg f_i)^{1/\dim(X_i)}$ (independent of $i$).
\item[(4)]
$f^*|_{\N^1(X)}$ is a scalar multiplication: $f^*|_{\N^1(X)} = q \, \id$, if and only if so  is $f_{r}^\ast$.
\end{itemize}
\end{theorem}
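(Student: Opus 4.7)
The plan combines the structure theorem for MRC fibrations (Proposition \ref{PropA}) with an $f$-equivariant MMP construction. Since $X$ is klt, Proposition \ref{PropA}(4) yields a special MRC morphism $\pi:X\to Y$ onto a $Q$-abelian variety $Y$, together with a polarized endomorphism $g$ on $Y$ such that $g\circ\pi=\pi\circ f$. When $K_X$ is pseudo-effective, the klt variety $X$ is non-uniruled, so $\pi$ is the identity; then $X=Y$ is $Q$-abelian and (1) holds with $r=1$.

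When $K_X$ is not pseudo-effective, $\pi$ has positive relative dimension with rationally connected general fibre. I would then run a relative $K$-MMP over $Y$, possibly chaining through several Fano contractions, to obtain the tower $X=X_1\dashrightarrow\cdots\dashrightarrow X_r=Y$, where each step is a $K_{X_i}$-negative divisorial contraction, flip, or Fano contraction over $Y$. The essential obstacle is equivariance: after replacing $f$ by a positive power, every extremal ray contracted along the MMP must be fixed by $f_*$. This is nontrivial because infinitely many $K$-negative extremal rays may appear. The strategy, to be carried out in Section \ref{MMP} and generalizing \cite{Zh-comp} to all dimensions, is to use the polarization $f^*H\sim qH$ with $H$ ample to constrain $f_*$ on $\NE(X)$, so that at every stage only finitely many extremal rays are genuinely relevant and a suitable power of $f$ fixes each one. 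Once equivariance at step $i$ is secured, Theorem \ref{descend-thm} guarantees that $f_{i+1}$ is again polarized, say by some ample Cartier divisor $H_{i+1}$; the equi-dimensionality and rational connectedness of fibres of $X_i\to Y$ asserted in (2) follow from the inheritance of the MRC structure on each $X_i$ via Proposition \ref{PropA}(3).

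For (3), I would observe that $\N^1$ evolves predictably along the MMP: at a divisorial contraction it grows by the class of the exceptional divisor, a flip preserves $\N^1$ across the codimension-one isomorphism, and a Fano contraction contributes the class of an ample polarizing divisor $H_i$. Taking strict transforms back to $X$, which is well defined in codimension one through any MMP step, produces the asserted spanning set. Functoriality together with $f_i^*H_i\sim q_i H_i$ makes each such pullback an $f^*$-eigenvector with eigenvalue $q_i=(\deg f_i)^{1/\dim X_i}$, using the identity $\deg f_i=q_i^{\dim X_i}$ noted in the introduction; since birational MMP steps preserve the degree of the descended endomorphism and Theorem \ref{descend-thm} matches degrees across Fano contractions, all $q_i$ coincide with the original polarization eigenvalue $q$ of $f$.

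Finally, (4) is formal from (3). If $f^*|_{\N^1(X)}=q\cdot\id$, then the identity $f^*\circ\pi^*=\pi^*\circ f_r^*$ together with the injectivity of $\pi^*:\N^1(Y)\hookrightarrow\N^1(X)$ (a consequence of $\pi$ being surjective) forces $f_r^*=q\cdot\id$. Conversely, if $f_r^*=q\cdot\id$, then by (3) both $\pi^*\N^1(Y)$ and the pullbacks of the $H_i$ lie in the $q$-eigenspace of $f^*$, so $f^*$ acts by $q$ on a spanning set, hence on all of $\N^1(X)$.
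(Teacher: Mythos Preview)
Your approach inverts the paper's logical order: you fix $Y$ in advance as the special MRC base from Proposition \ref{PropA} and then run a relative MMP over $Y$, whereas the paper runs an absolute MMP (via \cite{BCHM}, chaining through Mori fibre spaces) until it reaches some $X_r$ with $K_{X_r}$ pseudo-effective, identifies $X_r$ as $Q$-abelian \emph{a posteriori} via Lemma \ref{pecase}, and only then shows each $X_i\to X_r$ is a morphism using Lemma \ref{mor-q-abelian}. Your route is plausible but leaves a genuine gap: you do not explain why the relative MMP over your pre-chosen $Y$ terminates at $Y$ itself rather than at some $X_r$ merely birational to $Y$. Closing this would require precisely the equi-dimensionality from Lemma \ref{fibres-rc+irr} together with Zariski's main theorem, so nothing is saved by fixing $Y$ first.

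Two further points. Your justification of (2) by ``inheritance of the MRC structure on each $X_i$ via Proposition \ref{PropA}(3)'' is not valid as stated: that proposition concerns only the normalized graph of $X\dashrightarrow Y$, not the intermediate $X_i\to Y$. The paper instead uses Lemma \ref{mmp-rc} (the composite $X_i\dashrightarrow Y$ has rationally connected general fibre because each MMP step does) and then Lemma \ref{fibres-rc+irr} (equi-dimensionality and irreducibility of all fibres over a $Q$-abelian base, given the polarized equivariance). For (3), your spanning argument inserts exceptional divisors at divisorial steps and $H_i$ only at Fano steps, but the theorem asserts spanning by the ample $H_i$ at \emph{every} non-flip step; the paper's argument is simply that $H_i$, being ample on $X_i$, cannot lie in $\pi_i^\ast\N^1(X_{i+1})$ whenever $\pi_i$ has positive-dimensional fibres, so $H_i$ complements $\N^1(X_{i+1})$ for both divisorial and Fano contractions. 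This matters because the eigenvector claim you need for (4) is stated for the $H_i$, not for exceptional divisors.
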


\begin{remark}In Theorem \ref{scalarthm}, if we weaken the klt singularities assumption on $X$ to lc singularities, then our lemmas assert that
we still can run $f$-equivariant MMP. We refer to \cite{Fu15} for the LMMP of $\Q$-factorial log canonical pair. However, we could not show that this MMP terminates and could not claim assertions (1) - (2) in Theorem \ref{scalarthm}.
\end{remark}

%

A normal projective variety $X$ is of {\it Calabi-Yau type} (resp. {\it Fano type}), if there is a boundary $\Q$-divisor
$\Delta \ge 0$, such that the pair $(X, \Delta)$ has at worst lc (resp.~klt) singularities and $K_X+\Delta\sim_{\mathbb{Q}}0$ (resp.~ $-(K_X+\Delta)$ is ample).
Applying Theorem \ref{scalarthm} and working a bit more, we have the following result.

\begin{theorem}\label{thm-smooth-rc}  Let $f:X\to X$ be a polarized endomorphism of a smooth rationally connected projective variety $X$.
Then, replacing $f$ by a positive power, we have:
\begin{itemize}
\item[(1)] $f^\ast|_{\N^1(X)}$ is a scalar multiplication. Namely, $f^\ast|_{\N^1(X)} = q \, \id$ for some $q > 1$.
\item[(2)] The number $s$ of all prime divisors $D_i$
with either $f^{-1}(D_i) = D_i$ or $\kappa(X, D_i) = 0$, satisfies $s \le \dim (X) + \rho(X)$, where $\rho(X)$ is the Picard number of $X$.
\item[(3)] The Iitaka $D$-dimensions satisfy $\kappa(X, -K_X) \ge \kappa(X, -(K_X + \sum\limits_{i=1}^s D_i)) \ge 0$.
\item[(4)] If (i) $s = \dim (X) + \rho(X)$, or (ii) $\kappa(X, -(K_X + \sum\limits_{i=1}^s D_i)) = 0$ or (iii) $D_1$ is non-uniruled,
then $K_X + \sum\limits_{i=1}^s D_i \sim_\mathbb{Q} 0$ and $X$ is of Calabi-Yau type (with $s = 1$ in Case (iii)).
\end{itemize}
\end{theorem}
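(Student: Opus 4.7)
For (1), I would apply Theorem~\ref{scalarthm} directly: a smooth $X$ is $\Q$-factorial and klt, so there is an $f$-equivariant relative MMP over a $Q$-abelian base $Y$. Since $X$ is rationally connected, $\Alb(X) = 0$, and the composition $X \to Y \to \Alb(Y)$ is constant; as $Y \to \Alb(Y)$ has finite fibres (because $\dim Y = \dim \Alb(Y)$ in the $Q$-abelian setting), the image of $X$ in $Y$ is a single point, so $Y$ is trivial. Then $f_r^\ast$ on $\N^1(Y) = 0$ is vacuously a scalar, and Theorem~\ref{scalarthm}(4) yields $f^\ast|_{\N^1(X)} = q\,\id$ for some integer $q > 1$. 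Before proceeding, I normalize the list of $D_i$'s: smooth rationally connected $X$ has $\Pic^0(X) = 0$, so numerical equivalence of divisors agrees with $\Q$-linear equivalence modulo torsion; hence by (1), $f^\ast D \sim_\Q qD$ for every prime divisor $D$. If $\kappa(X, D) = 0$, then $h^0(X, mqD) = 1$ for $m$ sufficiently divisible, making $qD$ the unique effective $\Q$-divisor in its class, so $f^\ast D = qD$ as Weil divisors and $f^{-1}(D) = D$. Replacing $f$ by a positive power, $s$ equals the number of totally invariant prime divisors.

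For (3), I would invoke the ramification formula $K_X = f^\ast K_X + R_f$, which combined with (1) gives $R_f \sim_\Q -(q-1)K_X$. The relation $f^\ast D_i = qD_i$ says $f$ has ramification index $q-1$ along each $D_i$, hence $R_f \geq (q-1)\sum_i D_i$, yielding
$$-(K_X + \sum_i D_i) \sim_\Q \tfrac{1}{q-1}\bigl(R_f - (q-1)\sum_i D_i\bigr) \geq 0.$$
This gives $\kappa(X, -(K_X + \sum D_i)) \geq 0$, and the inequality $\kappa(X, -K_X) \geq \kappa(X, -(K_X + \sum D_i))$ is immediate since the two divisors differ by the effective $\sum D_i$.

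For (2), I would use the $f$-equivariant MMP $X = X_1 \dashrightarrow \cdots \dashrightarrow X_{r-1} \to X_r =$ point supplied by Theorem~\ref{scalarthm}. Each divisorial contraction drops $\rho$ by one and contracts a single totally invariant prime divisor whose strict transform on $X$ is among the $D_i$; flips preserve $\rho$ and the list of totally invariant primes; the terminal $X_{r-1}$ is Fano with $\rho = 1$ and carries the polarized endomorphism $f_{r-1}$. On $X_{r-1}$ every totally invariant prime divisor is a positive rational multiple of the ample generator, and combining $R_{f_{r-1}} \sim_\Q (q-1)(-K_{X_{r-1}})$ with $R_{f_{r-1}} \geq (q-1)\sum_j D_j^{(r-1)}$ bounds the number of such divisors by the Fano index of $X_{r-1}$, which by a Kobayashi--Ochiai-type bound is at most $\dim X_{r-1} + 1 = \dim X + 1$. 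Summing the at-most $\rho(X) - 1$ divisorial contributions with this gives $s \leq \dim X + \rho(X)$.

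For (4), I would treat each case in turn. In (ii), set $E := -(K_X + \sum D_i)$, which is $\Q$-effective by (3). Rigidity $\kappa(E) = 0$ makes $E$ the unique effective $\Q$-divisor in its class; since $f^\ast E \sim_\Q qE$ is also such a divisor, uniqueness forces $f^\ast E = qE$, so $\Supp(E)$ is totally invariant and hence contained in $\{D_i\}$. But the explicit representative $\tfrac{1}{q-1}(R_f - (q-1)\sum D_i)$ is supported \emph{outside} $\{D_i\}$ by construction, so uniqueness again forces $E = 0$. In (iii), $f|_{D_1}$ is polarized by the restriction of the ample polarization, and $D_1$ non-uniruled lets Proposition~\ref{PropA} (applied to the normalization of $D_1$ via the conductor formula) identify $D_1$ birationally with a $Q$-abelian variety, giving $K_{D_1} \sim_\Q 0$; by adjunction $(K_X + D_1)|_{D_1} \sim_\Q 0$. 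Writing $-(K_X + D_1) = E + \sum_{i \geq 2} D_i$ ($\Q$-effective) and restricting to $D_1$ forces this $\Q$-effective restriction to be $\Q$-trivial, hence zero, making each $D_i$ ($i\geq 2$) and $E$ disjoint from $D_1$; a connectivity argument using the equivariant MMP and the scalar action then rules out $D_i$ with $i \geq 2$, leaving $s = 1$ and $K_X + D_1 \sim_\Q 0$. Case (i) proceeds by tracing equality in the bound of (2): $s = \dim X + \rho(X)$ saturates the Fano-index bound on $X_{r-1}$, which by the equality case of Kobayashi--Ochiai forces $X_{r-1} \cong \mathbb{P}^n$; the relation $K + \sum_j D_j^{(r-1)} \sim 0$ on $\mathbb{P}^n$ then lifts through the equivariant MMP to $K_X + \sum_i D_i \sim_\Q 0$. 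The main obstacles I anticipate are securing the Fano-index bound on the possibly singular klt $X_{r-1}$ in (2), and the global connectivity step in (4)(iii) needed to promote vanishing near $D_1$ to $s = 1$.
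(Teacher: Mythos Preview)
Your argument for (1) contains a false step: it is not true that a $Q$-abelian variety $Y$ satisfies $\dim Y = \dim \Alb(Y)$. The Kummer quotient $A/\{\pm 1\}$ of an abelian surface (Example~\ref{ex2}) is $Q$-abelian with trivial Albanese. The conclusion is still correct and easy to repair: since $X$ is rationally connected and $X \to Y$ is surjective, $Y$ is rationally connected, while a positive-dimensional $Q$-abelian variety has $K_Y \sim_\Q 0$ and is non-uniruled; hence $Y$ is a point. The paper argues instead via $q^\natural(X)=0$ (smooth rationally connected varieties are algebraically simply connected), feeding this into Lemma~\ref{smooth-scalar}.

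For (2) and (4)(i), the paper takes a completely different route: it black-boxes the bound $s \le \dim X + \rho(X)$ and its equality case as Proposition~\ref{cy-prop}(2), which in turn is quoted from \cite[Theorem 1.3]{Zh-adv}. That reference proves the inequality directly on $X$ via a cohomological/log-canonical argument, without running MMP or invoking any Fano index bound. Your proposed route through a Kobayashi--Ochiai-type inequality on the terminal klt Fano $X_{r-1}$ is not known in that generality; the obstacle you flag is genuine, and I do not see how to close it without essentially reproving the result of \cite{Zh-adv}.

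For (4)(iii), the paper again avoids your adjunction/connectivity scheme entirely. It uses Lemma~\ref{K+D-eff} (a consequence of \cite{LZ}): on a rationally connected $X$, if $D_1$ is a non-uniruled prime divisor then $K_X + D_1$ is pseudo-effective. Combined with Proposition~\ref{cy-prop}(4), which gives $-(K_X+D) \sim_\Q \tfrac{1}{q-1}\Delta \ge 0$, one has $-(K_X+D_1) \sim_\Q \tfrac{1}{q-1}\Delta + \sum_{i\ge 2} D_i$, an effective divisor numerically anti-pseudo-effective, hence zero. This immediately forces $\Delta = 0$, $s = 1$, and $K_X + D_1 \sim_\Q 0$, with no connectivity argument needed. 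Your adjunction approach would require controlling the conductor of the normalization of $D_1$ and then a global propagation step that you yourself identify as problematic; the paper's route sidesteps both.

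Parts (3) and (4)(ii) of your plan match the paper's Proposition~\ref{cy-prop} essentially verbatim.
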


Our Theorem \ref{thm-smooth-rc} (3) is slightly stronger than that in \cite[Theorem C]{Bo}, where they proved that $-K_X$ is pseudo-effective, but they did not assume $X$ is rationally connected.

Let $X$ be a normal projective variety.
A polarized endomorphism $f:X\to X$ is {\it imprimitive} if there is a dominant rational map $\pi:X \dashrightarrow Y$ to a normal projective variety $Y$,
with $0<\dim(Y)<\dim(X)$, such that $f$ descends to a polarized endomorphism $f_Y$ of $Y$ (i.e.~$f_Y\circ\pi=\pi\circ f$). We say that $f$ is {\it primitive} if it is not imprimitive.


\begin{corollary}\label{main-cor-k} Let $f:X\to X$ be a polarized endomorphism of a $\mathbb{Q}$-factorial klt projective variety $X$ such that:
\begin{itemize}
\item[(i)]
$X$ is not a $Q$-abelian variety, and
\item[(ii)]
$f^s$ is primitive for all $s > 0$.
\end{itemize}
Then, replacing $f$ by a positive power, we have the following assertions.
\begin{itemize}
\item[(1)]$(X, f)$ is equivariantly birational to a Fano variety $X_{r-1}$ of Picard number one, and $f^\ast|_{\N^1(X)}$ is a scalar multiplication.
Precisely, running the MMP on $X$ we get an $f$-equivariant sequence $X=X_1 \dashrightarrow   \cdots \dashrightarrow X_{r-1}$ of divisorial contractions and flips, such that $X_{r-1}$ is a Fano variety of Picard number one with a polarized endomorphism  $f_{r-1}$ of $X_{r-1}$ (induced from $f$).
\item[(2)] $-K_X$ is big.
\end{itemize}
\end{corollary}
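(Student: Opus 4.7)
The plan is to invoke the $f$-equivariant MMP of Theorem \ref{scalarthm} and use the primitivity hypothesis to force the $Q$-abelian base to be a point. First I would apply Theorem \ref{scalarthm}: after replacing $f$ by a positive power, we obtain a $Q$-abelian variety $Y$, a morphism $X \to Y$, and an $f$-equivariant MMP $X = X_1 \dashrightarrow \cdots \dashrightarrow X_r = Y$. The assumption (i) that $X$ is not $Q$-abelian excludes the pseudo-effective case in Theorem \ref{scalarthm}(1), for otherwise $X = Y$ would already be $Q$-abelian, contradicting (i). Hence $K_X$ is not pseudo-effective and Theorem \ref{scalarthm}(2) applies: each $f_i$ is polarized, each $X_i \to Y$ is equi-dimensional with rationally connected fibres, and $X_{r-1} \to Y$ is a genuine Fano contraction, which strictly reduces dimension, so $\dim(Y) < \dim(X)$.

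Next I would apply the primitivity hypothesis (ii) to the $f$-equivariant dominant rational map $\pi: X \dashrightarrow Y$ together with the polarized descent $f_r$ on $Y$. Since no positive power of $f$ is imprimitive, the intermediate range $0 < \dim(Y) < \dim(X)$ is forbidden, forcing $\dim(Y) = 0$, i.e.\ $Y$ is a point. Consequently $X_{r-1}$ admits a Fano contraction to a point; as every step of the MMP has relative Picard number one, $\rho(X_{r-1}) = 1$, and $X_{r-1}$ is a Fano variety of Picard number one with the induced polarized endomorphism $f_{r-1}$. The scalar multiplication property of $f^\ast|_{\N^1(X)}$ is then immediate from Theorem \ref{scalarthm}(4): since $\N^1(Y) = 0$, the map $f_r^\ast$ is trivially a scalar multiplication, hence so is $f^\ast|_{\N^1(X)}$. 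This proves part (1).

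For part (2), I would run a descending induction through the MMP to transfer bigness of $-K$ from $X_{r-1}$ back to $X$. Since $X_{r-1}$ is Fano, $-K_{X_{r-1}}$ is ample, hence big. At a flip step $X_i \dashrightarrow X_{i+1}$, an isomorphism in codimension one, bigness of $-K$ is obviously preserved. At a divisorial contraction $\varphi: X_i \to X_{i+1}$ with exceptional divisor $E$ of a $K_{X_i}$-negative extremal ray, the discrepancy relation $-K_{X_i} \equiv \varphi^\ast(-K_{X_{i+1}}) + aE$ with $a>0$ expresses $-K_{X_i}$ as the pullback of a big class plus an effective divisor, which is big. Iterating back to $X_1 = X$ yields (2).

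There is no deep obstacle here, since Theorem \ref{scalarthm} supplies the essential equivariant MMP; the key subtlety is verifying that the full primitivity assumption on every power $f^s$ is genuinely needed to exclude the intermediate range $0 < \dim(Y) < \dim(X)$. Without it one could only conclude that $Y$ is a $Q$-abelian variety of positive dimension and that $X_{r-1}$ is a relative Fano fibration over $Y$ rather than an absolute Fano of Picard number one.
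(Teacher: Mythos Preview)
Your argument for part (1) is correct and follows the paper's approach essentially verbatim. One small point left implicit: to conclude that $X = X_1 \dashrightarrow \cdots \dashrightarrow X_{r-1}$ consists only of divisorial contractions and flips, you should also apply primitivity to each intermediate $X \dashrightarrow X_{j+1}$ to rule out Fano contractions before the last step; this is immediate and the paper is equally terse here.

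Your argument for part (2) contains a genuine sign error that breaks it. For a divisorial contraction $\varphi: X_i \to X_{i+1}$ of a $K_{X_i}$-negative extremal ray with exceptional divisor $E$, the discrepancy relation reads $K_{X_i} = \varphi^* K_{X_{i+1}} + aE$ with $a > 0$ (since for a contracted curve $C$ one has $K_{X_i}\cdot C < 0$, $\varphi^* K_{X_{i+1}}\cdot C = 0$, and $E\cdot C < 0$ by Lemma \ref{kmlem}). Hence
\[
-K_{X_i} \;=\; \varphi^*(-K_{X_{i+1}}) \;-\; aE,
\]
the pullback of a big class \emph{minus} an effective divisor, which need not be big. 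The blow-up of $\mathbb{P}^2$ at nine general points (where $-K$ is not big, yet each step is a $K$-negative divisorial contraction to a Fano) shows that the inductive step you propose simply fails without further input from the endomorphism.

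The paper's proof of (2) takes a genuinely different route that uses the polarized structure. From the scalar action $f^*|_{\N^1(X)} = q\,\id$ established in (1) and the ramification formula $K_X = f^* K_X + R_f$ one gets $-K_X \equiv \frac{1}{q-1} R_f$, so it suffices to show $R_f$ is big. The paper argues that $\N^1(X)$ is spanned by the exceptional divisors $E_t$ of the divisorial steps (each $f^{-1}$-invariant after a further iterate, hence appearing in $\Supp R_f$) together with the strict transform $R'$ of the ramification divisor of $f_{r-1}$ (nonzero since $X_{r-1}$ is Fano, so $K_{X_{r-1}}\not\equiv 0$). Writing an ample class as a real combination of these and absorbing into a large multiple of $R_f$ up to a pseudo-effective error yields bigness of $R_f$, hence of $-K_X$.
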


\begin{remark}
$ $
\begin{itemize}
\item[(1)] Corollary \ref{main-cor-k} partially answers \cite[Question 1.5]{yZh} about $X$ being of Calabi-Yau type. It also
answers \cite[Question 1.6 (2)]{yZh} about $X$ being of Fano type (but only up to $f$-equivariant birational map)
with an extra primitivity assumption on the pair $(X, f)$ which or something similar is needed, otherwise, Example  \ref{ex-xu}
gives a negative answer in the general case.
\item[(2)] By Remark \ref{main-rmk-dense} and Corollary \ref{main-cor-k},
we may say the building blocks of polarized endomorphisms are those on ($Q$-) abelian varieties and Fano varieties of Picard number one. This belief was stated and confirmed in \cite{Zh-comp} in dimension $\le 4$.
\end{itemize}
\end{remark}

The following question is natural from Theorem \ref{thm-smooth-rc} without assuming $X$ to be smooth.

\begin{question}\label{que-terminal-scalar} Let $f:X\to X$ be a polarized endomorphism of a rationally connected normal projective variety $X$.
Assume that $X$ has at worst $\mathbb{Q}$-factorial terminal singularities.
Is $(f^s)^\ast|_{\N^1(X)}$ a scalar multiplication for some $s>0$?
\end{question}

The above question has a positive answer when $\dim(X) \le 3$; see \cite[Theorem 1.2]{Zh-comp}.
Without the rational connectedness condition,
Question \ref{que-terminal-scalar} has a negative answer (see Example \ref{ex1}).
In view of Example \ref{ex2}, though it is a $K3$ surface with canonical singularities,
the terminality condition might be needed too.




%
%
%
%
\section{Preliminary results}

Let $X$ be a projective variety.
We use Cartier divisor $H$ (a Cartier divisor is integral, unless otherwise indicated)
and its corresponding invertible sheaf $\mathcal{O}(H)$ interchangeably.
Denote by $\Pic(X)$ the group of Cartier divisors modulo linear equivalence and $\Pic^0(X)$ the subgroup of the classes in $\Pic(X)$ which are algebraically equivalent to $0$.
Denote by $\N^1(X):=\NS(X) \otimes_{\Z} \mathbb{R}$ for the N\'eron-Severi group $\NS(X)$, where $\NS(X)=\Pic(X)/\Pic^0(X)$.

\begin{definition}[equivalences of Cartier divisors]\label{def-equ-car}
Let $X$ be a projective variety of dimension $n$ and $D$ a Cartier divisor.
$D$ is said to be {\it $\tau$-equivalent} to $0$ if $mD$ is algebraically equivalent to $0$ for some integer $m\neq 0$.
$D$ is said to be {\it numerically equivalent} to $0$ if $D\cdot C=0$ for any curve $C$ of $X$.
We extend the above two equivalences to the case of $\R$-Cartier divisors.
Let $E$ be an $\R$-Cartier divisor.
$E$ is said to be $\tau$-equivalent to $0$ (denoted by $E\sim_\tau 0$) if $E=\sum_i a_i E_i$ where $a_i\in \R$ and $E_i$ is a Cartier divisor $\tau$-equivalent to $0$.
Similarly, $E$ is said to be numerically equivalent to $0$ (denoted by $E\equiv 0$) if $E=\sum_i a_i E_i$ where $a_i\in \R$ and $E_i$ is a Cartier divisor numerically equivalent to $0$.
It is known that $E\sim_\tau 0$ if and only if $E\equiv 0$ (cf.~\cite[Theorem 9.6.3]{FGI}).
Therefore, one can also regard $\N^1(X)$ as the quotient vector space of $\R$-Cartier divisors modulo the numerical equivalence.
\end{definition}

\begin{definition}[weak numerical equivalence of cycles]\label{def-num-cyc}
Let $X$ be a projective variety of dimension $n$.
Let $Z$ and $Z'$ be two $r$-cycles with real coefficients.
$Z$ is said to be {\it weakly numerically equivalent} to $Z'$ (denoted by $Z\equiv_w Z'$) if $(Z-Z')\cdot H_1\cdots H_r=0$ for any Cartier divisors $H_1,\cdots, H_r$.
Denote by $\N_r(X)$ the quotient vector space of $r$-cycles with real coefficients modulo the weak numerical equivalence.

Suppose further $X$ is normal.
$\N_{n-1}(X)$ is then the quotient vector space of Weil $\R$-divisors modulo the weak numerical equivalence.
It is known that Cartier divisors of $X$ are Weil divisors.
Let $D$ be an $\R$-Cartier divisor.
If $D\equiv 0$, then $D\equiv_w 0$.
The converse is true by the lemma below.
Therefore, one can regard $\N^1(X)$ as a subspace of $\N_{n-1}(X)$ and they are the same if $X$ is also $\mathbb{Q}$-factorial.
\end{definition}

\begin{lemma}\label{nrn1}(cf.~\cite[Lemma 3.2]{Zh-tams}) Let $X$ be a projective variety of dimension $n$.
Let $H_1,\cdots,H_{n-1}$ be ample $\mathbb{R}$-Cartier divisors and $M$ an $\mathbb{R}$-Cartier divisor.
Suppose that $$H_1\cdots H_{n-1}\cdot M = 0 =H_1\cdots H_{n-2}\cdot M^2.$$ Then $M\equiv 0$ (i.e., $M\cdot C = 0$ for any curve $C$ of $X$).
In particular, if $X$ is normal, then $\N^1(X)$ is a subspace of $\N_{n-1}(X)$.
\end{lemma}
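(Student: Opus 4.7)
The plan is to derive the lemma from the mixed Hodge index theorem (Khovanskii--Teissier form) on a smooth model, after reducing via resolution of singularities.

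First I would take a resolution $\pi\colon X' \to X$ and write $M' := \pi^*M$, $H'_i := \pi^*H_i$. The projection formula preserves both vanishing hypotheses verbatim on $X'$, and for any curve $C \subset X$ the strict transform $\widetilde C \subset X'$ is an irreducible curve with $\pi_*\widetilde C = C$, so $M \cdot C = M' \cdot \widetilde C$. Hence it suffices to show $M' \equiv 0$ on the smooth projective $X'$, where the $H'_i$ are now nef and big (pullbacks of ample).

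Next I would apply the mixed Khovanskii--Teissier inequality on $X'$:
\[
\bigl(M' \cdot H'_1 \cdots H'_{n-1}\bigr)^2 \;\geq\; \bigl((M')^2 \cdot H'_1 \cdots H'_{n-2}\bigr) \cdot \bigl((H'_{n-1})^2 \cdot H'_1 \cdots H'_{n-2}\bigr),
\]
with the rigidity that equality forces $M'$ to be numerically proportional to $H'_{n-1}$ in $\N^1(X')$. Our two hypotheses kill the LHS and the first RHS factor, while the second RHS factor equals $(H_{n-1})^2 \cdot H_1 \cdots H_{n-2} > 0$ (an $n$-fold ample intersection on $X$, pulled back). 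Hence equality is attained, so $M' \equiv c\, H'_{n-1}$ for some $c \in \mathbb R$. The relation $M' \cdot H'_1 \cdots H'_{n-1} = 0$ combined with the positivity of $(H'_{n-1})^2 \cdot H'_1 \cdots H'_{n-2}$ then pins $c = 0$, giving $M' \equiv 0$ on $X'$.

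For the ``in particular'' clause, let $D$ be $\mathbb R$-Cartier with $D \equiv_w 0$ on normal $X$, and pick any ample $\mathbb R$-Cartier divisors $H_1, \ldots, H_{n-1}$. The definition of $\equiv_w$ gives $D \cdot H_1 \cdots H_{n-1} = 0$, and, applied with $H_{n-1}$ replaced by the $\mathbb R$-Cartier divisor $D$ itself, also $D^2 \cdot H_1 \cdots H_{n-2} = 0$. The main statement then yields $D \equiv 0$, proving injectivity of $\N^1(X) \hookrightarrow \N_{n-1}(X)$. The principal technical obstacle is applying Khovanskii--Teissier in the nef-big (rather than ample) setting on $X'$; this is handled by perturbing $H'_i$ to $H_i^{(\varepsilon)} := H'_i + \varepsilon A$ ($A$ ample on $X'$, $\varepsilon > 0$), applying the inequality at each $\varepsilon > 0$, and passing to the limit with continuity of intersection numbers, exploiting that the Lorentzian signature $(1, \rho(X') - 1)$ and the rigidity of the equality case are stable under small perturbations within the open ample cone.
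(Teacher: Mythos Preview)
There is a genuine gap at the rigidity step. You claim that equality in the Khovanskii--Teissier inequality on $X'$ forces $M' \equiv c\,H'_{n-1}$ in $\N^1(X')$, but this fails when the $H'_i$ are only nef and big. Take $X'$ the blow-up of $\mathbb{P}^3$ at a point, with exceptional divisor $E$ and $H'_1 = H'_2 = \pi^*\mathcal{O}(1)$: since $H'_i|_E = 0$ one has $E\cdot H'_1\cdot H'_2 = 0 = E^2\cdot H'_1$, yet $E\not\equiv 0$ and $E$ is not proportional to $H'_2$. Your argument, as written, uses only the two vanishings on $X'$ and never the special feature $M' = \pi^*M$, so it cannot distinguish $M'$ from such an $E$. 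The perturbation does not rescue this: for $\varepsilon>0$ the class $M'$ no longer satisfies the orthogonality and isotropy hypotheses with respect to the perturbed form $Q^{(\varepsilon)}$, so you cannot invoke rigidity at positive $\varepsilon$, and equality cases do not survive limits --- the limiting inequality is just $0\le 0$.

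The paper itself gives no proof here, deferring to the cited reference. What the Hodge-index step on $X'$ legitimately yields is weaker: since the form $Q'(D,E) = D\cdot E\cdot H'_1\cdots H'_{n-2}$ has at most one positive eigenvalue (this much holds for nef classes) and $Q'(H'_{n-1},H'_{n-1})>0$, your two vanishings place $M'$ only in the \emph{radical} of $Q'$, i.e.\ $M\cdot D\cdot H_1\cdots H_{n-2}=0$ for every Cartier $D$. From there one still needs a further argument --- for instance an induction successively replacing each $H_i$ by an arbitrary ample class, or a reduction to the surface case by cutting with general members of $|mH_i|$ --- to reach $M\cdot A_1\cdots A_{n-1}=0$ for all ample $A_i$ and hence $M\equiv 0$.
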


\begin{definition}\label{def-cones} Let $X$ be a projective variety of dimension $n$. We define:
\begin{itemize}
\item $\Amp(X)$, the set of classes of ample $\mathbb{R}$-Cartier divisors in $\N^1(X)$.
\item $\Nef(X)$, the set of classes of nef $\mathbb{R}$-Cartier divisors in $\N^1(X)$.
\item $\PEC(X)$, the closure of the set of classes of effective $\mathbb{R}$-Cartier divisors in $\N^1(X)$.
\item $\PE(X)$, the closure of the set of classes of effective $(n-1)$-cycles with $\R$-coefficients in $\N_{n-1}(X)$.
\end{itemize}
Clearly, the above sets are all convex cones and contain no lines.
Note that $\PEC(X)$ can also be regarded as the closure of the set of classes of big $\R$-Cartier divisors in $\N^1(X)$.
Let $f:X\to X$ be a finite surjective endomorphism.
We may define pullback of cycles for $f$, such that $f^\ast$ induces an automorphism of $\N_r(X)$ and $f_\ast f^\ast=(\deg f)\,\id$; see \cite[Section 2]{Zh-comp}.
Note that the above cones are $(f^\ast)^{\pm 1}$-invariant.

Suppose further $X$ is normal.
Let $D$ be a Weil $\R$-divisor.
Then $D$ is said to be {\it pseudo-effective} if its class $[D]\in \PE(X)$.
In addition, $D$ is said to be {\it big} if $D=A+E$ for some ample $\mathbb{Q}$-Cartier
divisor and pseudo-effective Weil $\mathbb{R}$-divisor $E$, see \cite[Theorem 3.5]{FKL} for equivalent definitions.
Note that $\PE(X)$ can also be regarded as the closure of the set of classes of big Weil $\R$-divisors in $\N_{n-1}(X)$.
\end{definition}

\begin{definition}\label{def-q} Let $X$ be a normal projective variety.
Define:
\begin{itemize}
\item[(1)]
$q(X)=h^1(X,\mathcal{O}_X)=\dim H^1(X,\mathcal{O}_X)$ (the irregularity);
\item[(2)]
$\tilde{q}(X)=q(\tilde{X})$ with $\tilde{X}$ a smooth projective model of $X$; and
\item[(3)]
$q^\natural(X)=\sup\{\tilde{q}(X')\,|\,X'\to X \text{ is finite surjective and \'etale in codimension one}\}.$
\end{itemize}
\end{definition}

\begin{definition} Let $V$ be a finite dimensional real normed vector space and $S\subseteq V$ a subset.

The convex hull generated by $S$ is defined as
$$\{\sum\limits_{i\in I}a_i x_i \, |\, a_i\ge 0, x_i\in S, \sum\limits_{i\in I}a_i=1, |I|<\infty\}.$$
Suppose $S$ is bounded, i.e., there exists some $N>0$, such that $|s|<N$ for any $s\in S$.
Then $|\sum\limits_{i\in I}a_i x_i|\le \sum\limits_{i\in I}a_i \max\limits_{i\in I}\{|x_i|\} =
\max\limits_{i\in I}\{|x_i|\}<N$.  So the closure of the convex hull generated by $S$ is bounded.
Let $m$ be the dimension of the vector space spanned by $S$.
If $S$ is a finite set, then the convex hull generated by $S$ is a polytope which is covered by finitely many $m$-simplexes.
So for arbitrary $S$ (possibly an infinite set) and the convex hull $D$ generated by $S$, we may write the element $d\in D$ as $d=\sum\limits_{i= 1}^{m+1} a_i x_i$ with $a_i\ge 0$, $x_i\in S$ and $\sum\limits_{i= 1}^{m+1} a_i=1$.

The convex cone generated by $S$ is defined as
$$\{\sum\limits_{i\in I}a_i x_i \, |\, a_i\ge 0, x_i\in S, |I|<\infty\}.$$

Let $A$ and $B$ be two subsets of $V$. We define $d(A,B)=\inf\{|a-b|\, :\, a\in A, b\in B\}$.
Denote by $B(x,r):=\{x'\in V\, :\, |x-x'|<r\}$.

Let $C$ be a convex cone in $V$. Let $W$ be the subspace of $V$ spanned by $C$.
Denote by $\overline{C}$ the usual closure of $C$ in $V$.
Note that $\overline{C}\subseteq W$.
Denote by $C^{\circ}$ the interior part of $C$ as the biggest open subset of $W$ contained in $C$.
Denote by $\partial C:=\overline{C}-C^{\circ}$ the boundary of $C$.
Note that $C^\circ=\overline{C}^\circ$ and $\partial C=\partial \overline{C}$.

Let $C_1\subseteq C_2$ be two convex cones of $V$.
We say $C_1$ is an {\it extremal face} of $C_2$ if $u+v\in C_1$ implies that $u, v\in C_1$ for any $u, v\in C_2$.
An extremal face of a closed convex cone is always closed.
\end{definition}

\begin{lemma}\label{face} Let $V$ be a positive dimensional real normed vector space.
Let $C\subseteq V$ be a closed convex cone which spans $V$ and contains no line.
Let $C'\subseteq \partial C$ be a convex subcone.
Then $C'$ is contained in a closed extremal face $F\subseteq \partial C$. In particular, $C'$ is contained in a unique minimal closed extremal face in $\partial C$.
\end{lemma}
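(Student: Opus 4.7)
The plan is to produce an extremal face of $C$ containing $C'$ via Hahn--Banach separation, and then extract the unique minimum by intersecting all such faces. First I would note that since $C$ is a closed convex cone spanning the (finite-dimensional) space $V$, the interior $C^\circ$ is nonempty: any basis of $V$ drawn from $C$ generates an open simplicial subcone of $C$. Moreover $\partial C = C \setminus C^\circ$, so the hypothesis $C' \subseteq \partial C$ is equivalent to $C' \cap C^\circ = \emptyset$.

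Next I would apply the geometric Hahn--Banach theorem to the disjoint convex sets $C^\circ$ (open) and $C'$: there exist a nonzero linear functional $\ell$ on $V$ and a constant $c \in \R$ with $\ell \le c$ on $C^\circ$ and $\ell \ge c$ on $C'$. Using that $C^\circ$ and $C'$ are $\R_{>0}$-invariant (rescale by $t$ and let $t \to 0^+$ and $t \to +\infty$) forces $c = 0$ and $\ell \le 0$ on $C^\circ$, hence by continuity $\ell \le 0$ on $C = \overline{C^\circ}$. Combined with $\ell \ge 0$ on $C' \subseteq C$, this yields $\ell \equiv 0$ on $C'$. Set $F := C \cap \ker(\ell)$. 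Then $C' \subseteq F$; moreover $F$ is an extremal face of $C$, because if $u + v \in F$ with $u, v \in C$ then $\ell(u) + \ell(v) = 0$ with both summands $\le 0$, forcing $\ell(u) = \ell(v) = 0$ and $u, v \in F$; and $F \subseteq \partial C$, since if $\ell$ vanished at an interior point $x \in C^\circ$ then $\ell$ would take both signs in any neighborhood of $x$, contradicting $\ell \le 0$ on the open set $C^\circ$.

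For the uniqueness statement, I would observe that the intersection of an arbitrary family of closed extremal faces of $C$ is again a closed extremal face (the same one-line verification works), so the intersection of all closed extremal faces of $C$ containing $C'$ is the (unique) minimal such face. The standard dichotomy that any extremal face of $C$ either meets $C^\circ$, in which case it equals $C$ (for $x \in F \cap C^\circ$ and any $y \in C$, write $x = (x - \varepsilon y) + \varepsilon y$ with $x - \varepsilon y \in C$ for small $\varepsilon > 0$, and extremality forces $y \in F$), or else is contained in $\partial C$, together with the face just constructed, forces this minimal face to lie in $\partial C$. The only substantive step is the separation; everything else is standard bookkeeping for convex cones, and I do not expect any real obstacle beyond making sure the scaling argument and the ``no interior zero'' argument for $\ell$ are written cleanly.
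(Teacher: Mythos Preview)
Your proof is correct, but the route differs from the paper's. You invoke Hahn--Banach to produce a supporting hyperplane $\ker(\ell)$ and take $F = C \cap \ker(\ell)$, which is an extremal face containing $C'$ but in general not the minimal one; you then recover the minimal face by intersecting all such faces. The paper instead writes down the minimal face directly and elementarily: it sets
\[
F := \{x \in C \mid x + y \in C' \ \text{for some}\ y \in C\},
\]
and checks by hand that this $F$ is a convex cone, lies in $\partial C$ (if some $x \in F \cap C^\circ$ then $x + y \in C'$ would lie in $C^\circ$), contains $C'$ (take $y = 0$), and is extremal (if $p + q \in F$ with $p, q \in C$, then $p + q + s \in C'$ for some $s \in C$, whence $p \in F$ by definition). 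Closedness then comes from the general fact that extremal faces of closed cones are closed, and minimality is immediate from the construction: any extremal face $F'$ containing $C'$ must contain every $x$ with $x + y \in C' \subseteq F'$. Your approach has the virtue of being the textbook separation argument and yields a codimension-one supporting face as a byproduct; the paper's approach avoids any appeal to separation theorems and delivers the minimal face in one stroke without a second intersection step.
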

\begin{proof} Let $F:=\{x\in C\, |\, x+y\in C' \text{ for some } y\in C\}$.

If $x_1, x_2\in F$, then $x_1+y_1\in C'$ and $x_2+y_2\in C'$ for some $y_1, y_2\in C$.
For any $a\ge 0$, $a x_1+a y_1\in C'$ and $x_1+x_2+y_1+y_2\in C'$. So $F$ is a convex cone.

If $x\in F\cap C^\circ$, then $x+y\in C'$ for some $y\in C$ and $B(x,r)\subseteq C^\circ$ for some $r>0$. So $x+y\in B(x+y,r)=y+B(x,r)\subseteq C^\circ$ and hence $C'\cap C^\circ \neq \emptyset$, a contradiction. So $F\subseteq \partial C$.

If $x\in C'$, then $x+0=x\in C'$. So $C'\subseteq F$.

If $p, q\in C$ and $p+q\in F$, then $p+q+s\in C'$ for some $s\in C$.
By the construction of $F$, we have both $p, q\in F$. So $F$ is an extremal face.

Since $C$ is closed, $F$ is closed too.

By taking the intersection of all extremal faces containing $C'$, we get the minimal one.
In fact, $F$ is already the minimal one. Suppose $F'$ is an extremal face containing $C'$. Then for any $x\in F$, $x+y\in C'\subseteq F'$ for some $y\in C$. So $x\in F'$.
\end{proof}

\begin{lemma}\label{dis} Let $V$ be a positive dimensional real normed vector space.
Let $C\subseteq V$ be a closed convex cone containing no lines and $F\subsetneq C$ a positive dimensional  extremal face.
Fix $d>0$ and $k>0$. Let $S$ be the set of all $x\in C$ with $d(x,F)\ge d$ and $|x|\le k$.
Let $B$ be the closure of the convex cone generated by $S$. Then $d(F,B)>0$.
\end{lemma}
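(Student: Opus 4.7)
The strategy is a proof by contradiction that reduces the lemma to the assertion $F \cap B = \{0\}$, from which the positive-distance conclusion follows by a compactness argument on unit vectors. Two preliminary observations are worth recording. First, since $0 \in F$, every $s \in S$ satisfies $d \leq d(s,F) \leq |s| \leq k$, so $\overline{S}$ is a compact subset of $C \setminus \{0\}$ whose elements all lie outside $F$ (indeed each has distance at least $d$ from $F$). Second, applying the Carath\'eodory-type description of convex hulls recalled just before the lemma to $K := \mathrm{conv}(S)$ and using that the convex cone generated by $S$ equals $\{tk : t \geq 0,\, k \in K\}$, every element of this cone can be written as $\sum_{i=1}^{m+1} a_i s_i$ with $a_i \geq 0$ and $s_i \in S$, where $m = \dim V$; hence every $x \in B$ is a limit of such sums.

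Assume for contradiction that some nonzero $x$ lies in $F \cap B$, and write $x = \lim_n x_n$ with $x_n = \sum_{i=1}^{m+1} a_{n,i} s_{n,i}$ as above, and set $A_n := \max_i a_{n,i}$. If $(A_n)$ stays bounded then, after passing to a subsequence, $a_{n,i} \to a_i \geq 0$ and, by compactness of $\overline{S}$, $s_{n,i} \to s_i \in \overline{S}$, giving $x = \sum_i a_i s_i$. Each $a_i s_i$ lies in $C$ and their sum $x$ lies in $F$, so the extremality of $F$ forces $a_i s_i \in F$ for every $i$; because $s_i \in \overline{S}$ has $d(s_i, F) \geq d > 0$, it does not lie in $F$, so every $a_i$ vanishes, contradicting $x \neq 0$. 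If instead $A_n \to \infty$ along a subsequence, rescale by $A_n$: the coefficients $a_{n,i}/A_n \in [0,1]$ have maximum $1$, so after extraction $x_n / A_n \to z = \sum_i b_i s_i$ with $\max_i b_i = 1$ and $s_i \in \overline{S}$; but $x_n$ is bounded while $A_n \to \infty$, forcing $z = 0$. Normalizing so that $b_1 = 1$ yields $s_1 = -\sum_{i \geq 2} b_i s_i \in C \cap (-C) = \{0\}$ by the pointedness of $C$, contradicting $|s_1| \geq d > 0$.

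The main obstacle is controlling the representations $x_n = \sum_i a_{n,i} s_{n,i}$ in the limit: the Carath\'eodory bound keeps the number of summands fixed so that one can pass to limits simultaneously in the coefficients and in the $s_{n,i}$, and the case split on $(A_n)$ separates two independent obstructions, the finite-combination one handled by the extremal face property of $F$, and the escape-to-infinity one handled by the pointedness of $C$. Once $F \cap B = \{0\}$ is established, the conclusion $d(F,B) > 0$ follows by a standard compactness argument: any sequences $y_n \in F$ (of unit norm, say) and $b_n \in B$ with $|y_n - b_n| \to 0$ would, after passing to a convergent subsequence and using closedness of $B$, yield a common nonzero limit in $F \cap B$.
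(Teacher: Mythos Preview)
The statement as printed contains a typo: $B$ should be the closure of the \emph{convex hull} of $S$, not the convex cone. Since every extremal face of a cone contains the origin and $0$ lies in any cone, the version with ``cone'' is literally false: $d(F,B)=0$. The paper's own proof confirms the intended reading, writing $B'=\bigcup_{x_1,\ldots,x_{n+1}\in S} D_{x_1,\ldots,x_{n+1}}$, which is exactly the convex hull via Carath\'eodory; and the only place the lemma is applied (in the proof of Proposition~\ref{cone}) takes $B$ to be the closure of a convex hull.

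You followed the statement literally, and your argument correctly establishes $F\cap B=\{0\}$ for the cone closure; but the final compactness step does not give $d(F,B)>0$. The parenthetical ``of unit norm, say'' is unjustified: nothing prevents the witnessing sequences from being $y_n=b_n=0$. If instead one reads ``hull'', your argument simplifies and goes through cleanly: the coefficients satisfy $\sum_i a_{n,i}=1$, so $A_n\le 1$ and only your bounded case is needed; extremality then forces all $a_i=0$, contradicting $\sum a_i=1$, so $F\cap B=\emptyset$. The last step is rescued because $B$ is now \emph{compact} (bounded by $k$), so $|b_n|$ and hence $|y_n|$ stay bounded and subsequential extraction is legitimate.

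With that correction your route differs from the paper's. You first prove $F\cap B=\emptyset$ via extremality and then appeal to compactness of $B$; the paper instead bounds the distance directly by studying the continuous function $\eta(p,x_1,\ldots,x_{n+1})=d(p,D_{x_1,\ldots,x_{n+1}})$ on $F_{\le 2k}\times S^{\times(n+1)}$, showing it is strictly positive by extremality, and handling $|p|>2k$ by the trivial bound $\eta>k$. Both arguments rest on the same two ingredients (extremality of $F$ and boundedness of $S$), packaged differently; your version is slightly cleaner once the hull reading is in place, while the paper's makes the quantitative bound more explicit.
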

\begin{proof} Let $B'$ be the convex cone generated by $S$.  Let $n$ be the dimension of the vector space spanned by $S$.
Define $$\eta(p, x_1, \cdots, x_{n+1}):=d(p, D_{x_1, \cdots, x_{n+1}}),$$ where $p\in F$, $x_i\in S$ and $D_{x_1, \cdots, x_{n+1}}$ is the convex polytope generated by $x_1, \cdots, x_{n+1}$.
Clearly, $\eta$ is a continuous function from $F\times S^{\times (n+1)}$ to $\mathbb{R}_{\ge0}$. If $\eta(p, x_1, \cdots, x_{n+1})=0$, then $p\in D_{x_1, \cdots, x_{n+1}}$. Since $F$ is an extremal face, $x_i\in F$, a contradiction. So $\eta>0$.
Let $F_{>2k}:=\{p\in F\, :\, |p|>2k\}$ and $F_{\le 2k}=F-F_{>2k}$. Then $A:=F_{\le 2k}\times S^{\times (n+1)}$ is compact and hence $\eta|_A\ge d_1$ for some $d_1>0$.
Note that $D(x_1,\cdots, x_{n+1})$ is bounded by $k$. So for $A':=F_{> 2k}\times S^{\times (n+1)}$, $\eta|_{A'}> k$.
Since $B'=\cup_{x_1,\cdots, x_{n+1}\in S} D_{x_1, \cdots, x_{n+1}}$, $d(F, B')=\inf\limits_{\substack{p\in F; \, x_1,\cdots, x_{n+1}\in S}} \{d(p, D_{x_1, \cdots, x_{n+1}})\}\ge \min\{d_1, k\}>0$ and hence $d(F, B)>0$.
\end{proof}

For a linear map $f: V \to V$ of a finite dimensional real normed vector space $V$,
denote by $||f||$ the {\it norm} of $f$.

\begin{proposition}\label{cone} Let $f:V\to V$ be an invertible linear map of a positive dimensional real normed vector space $V$ such that $f^{\pm1}(C)=C$ for a closed convex cone $C\subseteq V$ which spans $V$ and contains no line.
Let $q$ be a positive number. Then (1) and (2) below are equivalent.
\begin{itemize}
\item[(1)] $f(x)=q x$ for some $x\in C^\circ$ (the interior part of $C$).
\item[(2)] There exists a constant $N>0$, such that $\frac{||f^i||}{q^i}< N$ for any $i\in \mathbb{Z}$.
\end{itemize}

Let $W\subseteq V$ be the eigenspace of $f$ corresponding to the eigenvalue $q$.
If (1) or (2) above is true, then $f$ is a diagonalizable linear map with all eigenvalues of modulus $q$.
So $F_\infty:=\lim\limits_{n\to +\infty}\frac{1}{n}\sum\limits_{i=0}^{n-1}\frac{f^i}{q^i}$ is a well-defined linear map onto $W$ and $F_\infty|_W= \id_W$.
\end{proposition}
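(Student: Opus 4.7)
The plan is to prove (1)$\Rightarrow$(2) by a short cone-interior argument, then extract the structural information about $f/q$ (and the shape of $F_\infty$) from (2), and finally prove (2)$\Rightarrow$(1) by producing a fixed vector in $C^\circ$ via the Ces\`aro average of an interior orbit.

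For (1)$\Rightarrow$(2), set $g:=f/q$, so $g(x)=x$ with $x\in C^\circ$ and $g^{\pm 1}(C)=C$. Choose $r>0$ with $B(x,r)\subseteq C$; for any unit vector $v$ one has $x\pm rv\in C$, hence $g^i(x\pm rv)=x\pm r\,g^i(v)\in C$ for every $i\in\mathbb{Z}$. Thus $g^i(v)$ lies in $K:=\tfrac{1}{r}\{y\in V:x+y,\,x-y\in C\}$. The set $K$ is bounded because $C$ contains no line: if $y_n\in rK$ with $|y_n|\to\infty$, then a subsequence of $y_n/|y_n|$ converges to a unit $w$ with $\pm w\in C$, producing a line in $C$, contradiction. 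Hence $\|f^i\|/q^i=\|g^i\|$ is uniformly bounded on $\mathbb{Z}$.

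Assuming (2), every complex eigenvalue $\lambda$ of $g$ satisfies $|\lambda^i|\le\|g^i\|<N$ for all $i\in\mathbb{Z}$, forcing $|\lambda|=1$; and the uniform bound on $\|g^i\|$ in $i$ rules out Jordan blocks of size $\ge 2$ (which would force polynomial growth), so $g$ is diagonalizable over $\mathbb{C}$ with all eigenvalues of modulus $1$. On each complex eigenspace, $\tfrac{1}{n}\sum_{i=0}^{n-1}\lambda^i$ tends to $1$ if $\lambda=1$ and to $0$ otherwise, so $F_\infty=\lim_n\tfrac{1}{n}\sum_{i=0}^{n-1}g^i$ exists and is the projection of $V$ onto $W=\ker(f-q\,\id)$ along the sum of the remaining eigenspaces; in particular $F_\infty|_W=\id_W$.

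For (2)$\Rightarrow$(1), I will produce $x\in W\cap C^\circ$. Choose $y\in C^\circ$ (nonempty since $C$ spans $V$) and set $x:=F_\infty(y)\in W$. The partial averages $z_n=\tfrac{1}{n}\sum_{i=0}^{n-1}g^i(y)$ are convex combinations of points of $C^\circ$ (both $g$ and $g^{-1}$ preserve $C$, hence $C^\circ$), so $z_n\in C^\circ$ and $x=\lim z_n\in C$. The orbit $O=\{g^i(y):i\in\mathbb{Z}\}$ is bounded by (2), with compact closure $\overline{O}\subseteq C^\circ$. Pick a linear functional $\phi$ strictly positive on $C\setminus\{0\}$ (available since $C$ is a closed salient cone spanning $V$); by compactness, $\phi\ge\epsilon>0$ on $\overline{O}$ for some $\epsilon>0$, whence $\phi(x)\ge\epsilon$ and in particular $x\neq 0$. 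The decisive step is to upgrade $x\in C$ to $x\in C^\circ$. Suppose instead $x\in\partial C$; then $x\ne 0$, so by Lemma \ref{face} $x$ lies in a positive-dimensional minimal closed extremal face $F\subsetneq C$. Since $\overline{O}$ is compact in the open set $C^\circ$ and $F\subseteq\partial C$, we have $d(\overline{O},F)\ge d_0>0$ with a uniform norm bound on $\overline{O}$; Lemma \ref{dis} then yields $d(F,B)>0$, where $B$ is the closure of the convex cone generated by $\overline{O}$. But $x=\lim z_n\in B$, contradicting $x\in F$. Hence $x\in W\cap C^\circ$ with $f(x)=qx$, completing (2)$\Rightarrow$(1). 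The main obstacle is precisely this last step: the Ces\`aro limit can \emph{a priori} drift to the boundary, and Lemmas \ref{face} and \ref{dis} are designed exactly to preclude this.
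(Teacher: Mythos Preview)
Your proof is correct, and for (2)$\Rightarrow$(1) you take a genuinely different route from the paper. The paper argues by contradiction: assuming $W\cap C^\circ=\emptyset$, it encloses $W\cap C$ in the minimal extremal face $F\subseteq\partial C$ (Lemma \ref{face}), shows $F$ is $g$-invariant by uniqueness, picks any $y\in C\setminus F$, uses the two-sided norm bound to get $d(g^i(y),F)\ge d/N$ for all $i$, and then applies the \emph{Brouwer fixed-point theorem} to the $g$-invariant closed convex hull $B$ of the orbit, producing a fixed point in $B\cap W\subseteq F$ and contradicting $d(B,F)>0$ (Lemma \ref{dis}). You instead exhibit the desired eigenvector explicitly as the Ces\`aro limit $x=F_\infty(y)$ of a point $y\in C^\circ$, avoiding Brouwer entirely; this is more constructive and ties the two halves of the proposition together nicely.

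One step needs justification: the assertion $\overline{O}\subseteq C^\circ$. From $B(y,\rho)\subseteq C$ and $\|g^{-i}\|<N$ one gets $B(g^i(y),\rho/N)\subseteq g^i(B(y,\rho))\subseteq C$, so $d(g^i(y),\partial C)\ge\rho/N$ uniformly in $i$, whence the closure of the orbit stays in $C^\circ$. Once you have this, your detour through Lemmas \ref{face} and \ref{dis} is in fact unnecessary: $C^\circ$ is convex, so the convex hull of the compact set $\overline{O}$ is itself a compact subset of $C^\circ$ (Carath\'eodory), and $x=\lim z_n$ already lies in it. (A terminological slip: ``closure of the convex cone generated by $\overline{O}$'' should read ``convex hull'', since the cone contains $0\in F$ and would give $d(F,B)=0$; the proof of Lemma \ref{dis} in the paper is likewise really about the hull.)
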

\begin{proof} We may assume $q=1$ after replacing $f$ by $f/q$.

$(1) \Rightarrow (2)$
We may assume $|x|=1$.
Since $x\in C^\circ$ and $C$ spans $V$, there exists some $t>0$, such that $x+tv\in C$ for any $v\in V$ with $|v|\le 1$.
Since $C$ contains no line, there exists some $s>0$, such that $x-x'\not\in C$ for any $x'\in C$ with $|x'|\ge s$.
Suppose that $||f^i||$ is not bounded.
Since $C$ spans $V$, there exist some $i_0\in \mathbb{Z}$ and $y\in C$ with $|y|=1$, such that $|tf^{i_0}(y)|>s$.
Since $x-ty\in C$, $f^{i_0}(x-ty)\in C$. However, $f^{i_0}(x-ty)=x-tf^{i_0}(y)\not\in C$, a contradiction.

Suppose $(2)$ is true. If either the spectral radius of $f$ is greater than $1$ or $f$ has a nontrivial Jordan block whose eigenvalue is of modulus $1$.
Then $\lim\limits_{n\to +\infty}||f^n||=+\infty$, a contradiction.
Similarly, the spectral radius of $f^{-1}$ is $1$.
Therefore, the last assertion of the proposition follows.

$(2) \Rightarrow (1)$ Set $n=\dim(V)$ and $m=\dim(W)$.
If $m=n$, it is trivial.
Suppose $m<n$. If $W\cap C^\circ \neq \emptyset$, then we are done. Suppose $W\cap C^\circ =\emptyset$. Since $f^{\pm 1}(C)=C$, $F_\infty(C)\subseteq C$ and hence $F_\infty(C)\subseteq W\cap C$. Note that $W$ is spanned by $F_\infty (C)$.
Then $W\cap C$ is an $m$-dimensional closed convex subcone of $C$ in $\partial C$.
By Lemma \ref{face}, $W\cap C\subseteq F$ for some minimal closed extremal face $F$ of $C$ in $\partial C$.
Note that $f^{\pm 1}(F)$ are still minimal closed extremal faces  in $\partial C$ containing $W\cap C$.
By the uniqueness, $f^{\pm1}(F)=F$.
Fix any $y\in C-F$ and $d:=d(y,F)>0$.
Then by $(2)$, for any $z\in F$, $d\le |y-f^{-i}(z)|=|f^{-i}(f^i(y)-z)|< N|f^i(y)-z|$.
So $d(f^i(y),F)\ge\frac{d}{N}$ for any $i\in \Z$.
Let $B$ be the closure of the convex hull generated by all $f^i(y)$ ($i \in \Z$).
By (2), $B$ is bounded, $f^{\pm1}(B)=B$ and $d(B,F)>0$ by Lemma \ref{dis}. Since $C\cap W\subseteq F$ and $B\subseteq C$, $B\cap W= B\cap C\cap W\subseteq B\cap F=\emptyset$. However, by Brouwer-fixed point theorem, $f(b)=b$ for some $b\in B$. So we get a contradiction.
\end{proof}

\begin{lemma}\label{kmlem}(cf.~\cite[Lemma 2.62]{KM}) Let $f:X\to Y$ be a birational morphism of normal projective varieties.
Assume that $Y$ is $\mathbb{Q}$-factorial.
Then there is an effective $f$-exceptional divisor $F$ such that $-F$ is $f$-ample.
\end{lemma}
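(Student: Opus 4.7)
The plan is to construct $F$ explicitly from an ample divisor on $X$ and verify its three required properties (effective, $f$-exceptional, with $-F$ $f$-ample) using the $\mathbb{Q}$-factoriality of $Y$ together with the negativity lemma for proper birational morphisms of normal varieties.

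First I would pick any ample Cartier divisor $H$ on $X$, which exists by projectivity, and form the Weil divisor $f_*H$ on $Y$. Since $Y$ is $\mathbb{Q}$-factorial, $f_*H$ is $\mathbb{Q}$-Cartier, so the pullback $f^*f_*H$ makes sense as a $\mathbb{Q}$-Cartier $\mathbb{Q}$-divisor on $X$. Define
\[
F := f^*f_*H - H,
\]
which is $\mathbb{Q}$-Cartier on $X$; after clearing denominators at the end, a suitable positive integer multiple yields an integral divisor still satisfying all desired properties, so I work with $F$ as a $\mathbb{Q}$-divisor for clarity.

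Next I would verify that $F$ is $f$-exceptional. Since $f$ is proper birational and $f_*H$ is $\mathbb{Q}$-Cartier, one has $f_*f^*f_*H = f_*H$, hence $f_*F = f_*H - f_*H = 0$, which means every prime component of $F$ is contracted by $f$. For $f$-ampleness of $-F$ I would apply the relative Kleiman criterion: for any curve $C \subset X$ contracted by $f$, the projection formula gives $f^*f_*H \cdot C = f_*H \cdot f_*C = 0$, so $-F \cdot C = H \cdot C > 0$ by ampleness of $H$. Thus $-F$ is strictly positive on $\NE(X/Y) \setminus \{0\}$, which together with $\mathbb{Q}$-Cartier-ness yields $f$-ampleness of $-F$.

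The main obstacle is showing that $F$ is effective, since a priori the coefficients of $F$ along the various $f$-exceptional prime divisors have no obvious sign (and $H$ itself may intersect exceptional curves in complicated ways). Here I would invoke the negativity lemma for proper birational morphisms: if $B$ is a $\mathbb{Q}$-Cartier divisor on $X$ with $-B$ being $f$-nef, then $B$ is effective if and only if $f_*B$ is. Applying this to $B = F$, the $f$-ampleness of $-F$ (hence $f$-nefness) together with the trivially effective $f_*F = 0$ forces $F \geq 0$. Clearing denominators then gives the desired integral effective $f$-exceptional divisor with $-F$ $f$-ample.
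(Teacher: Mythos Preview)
Your proof is correct and is precisely the standard argument from \cite[Lemma 2.62]{KM}, which is all the paper invokes (no independent proof is given in the paper). One small tightening: rather than appealing to positivity on $\NE(X/Y)\setminus\{0\}$ alone, you can simply note that $-F = H - f^*f_*H$ is $f$-numerically equivalent to the ample divisor $H$ (since $f^*f_*H$ is pulled back from $Y$), hence $-F$ is $f$-ample directly.
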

\begin{proposition}\label{kmprop}(cf.~\cite[Proposition 1.45]{KM}) Let $f:X\to Y$ be a morphism of projective varieties with $M$ an ample divisor on $Y$.
If $L$ is an $f$-ample Cartier divisor on $Y$, then $L+\nu f^{\ast}M$ is ample for $\nu\gg 1$.
\end{proposition}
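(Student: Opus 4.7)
The plan is to apply Kleiman's ampleness criterion: $L + \nu f^{\ast}M$ is ample on $X$ if and only if $(L + \nu f^{\ast}M) \cdot z > 0$ for every nonzero $z \in \NE(X)$. To make this uniform in $z$, fix an ample divisor $H$ on $X$ and slice the cone, setting $K := \{z \in \NE(X) : H \cdot z = 1\}$, which is compact. It then suffices to show that $L + \nu f^{\ast}M$ is strictly positive on $K$ for $\nu$ large enough.

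Since $M$ is ample on $Y$, the pullback $f^{\ast}M$ is nef on $X$, so the linear functional $\phi(z) := f^{\ast}M \cdot z$ is nonnegative on $\NE(X)$. The key input is the relative version of Kleiman's criterion: $L$ is $f$-ample if and only if $L \cdot z > 0$ for every nonzero $z$ in the closure $N$ of the cone generated by classes of curves contracted by $f$. Ampleness of $M$ forces any curve $C \subset X$ with $\phi(C) = M \cdot f_{\ast}C = 0$ to satisfy $f_{\ast}C = 0$, i.e., to be $f$-contracted; passing to closures, the extremal face $F := \NE(X) \cap \{\phi = 0\}$ coincides with $N$, so $L \cdot z > 0$ on $F \setminus \{0\}$. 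By compactness of $K \cap F$, one obtains $L \cdot z \ge \epsilon$ there for some $\epsilon > 0$.

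The conclusion then follows from a two-region compactness split on $K$. By continuity, there is an open neighborhood $U \subset K$ of $K \cap F$ on which $L \cdot z \ge \epsilon/2$; hence $(L + \nu f^{\ast}M) \cdot z \ge \epsilon/2 > 0$ on $U$ for every $\nu \ge 0$, since $\phi \ge 0$. On the complementary compact set $K \setminus U$, which is disjoint from $\{\phi = 0\}$, continuity and compactness give $\phi(z) \ge \delta > 0$ for some $\delta$, while $L \cdot z \ge -B$ for some $B$. Taking $\nu > B/\delta$ then yields $(L + \nu f^{\ast}M) \cdot z > 0$ on all of $K$, and hence on $\NE(X) \setminus \{0\}$ by rescaling. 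The main obstacle is the precise identification of the face $F$ with the closed relative cone $N$ so that the relative Kleiman criterion applies; once this is in place, the remaining compactness split is routine and is structurally the same sort of argument used in Lemmas \ref{face} and \ref{dis} above.
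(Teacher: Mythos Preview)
The paper does not supply a proof of this proposition; it is simply recorded with a citation to \cite[Proposition~1.45]{KM}, so there is no in-paper argument to compare against.

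Your Kleiman-criterion approach is correct and is one of the standard routes to this statement; the two-region compactness split on the slice $K$ is cleanly laid out. The only step that warrants more than ``passing to closures'' is the inclusion $F\subseteq N$: knowing that every \emph{curve} $C$ with $\phi(C)=0$ is $f$-contracted does not automatically force every \emph{class} $z\in\NE(X)$ with $\phi(z)=0$ to lie in the closure $N$ of such curves, since the closure of an intersection need not equal the intersection of the closures. What you do get immediately from $\phi(z)=0$, via the projection formula and ampleness of $M$, is $f_*z=0$ in $\N_1(Y)$; the remaining identification $\NE(X)\cap\ker f_* = N$ is true, but is itself part of the relative Kleiman package rather than a formal consequence of it. An alternative is to bypass $F=N$ entirely and verify $L\cdot z>0$ on $F\setminus\{0\}$ directly by choosing a relatively very ample multiple $mL$, using the resulting closed immersion $\iota:X\hookrightarrow \mathbb{P}^N\times Y$ over $Y$, and computing $\iota_*z$ --- but that embedding argument is precisely the direct textbook proof of the proposition (and is essentially how \cite{KM} proceeds), so in the end both routes rest on the same input at the key point.
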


The following lemma slightly extends \cite[Lemma 2.2]{Na-Zh}.
\begin{lemma}\label{nzlem} Let $X$ be a projective variety of dimension $n$ and $D$ an $\R$-Cartier divisor.
Assume the following two conditions:

(1) $D\cdot G\cdot L_1\cdots L_{n-2}\ge 0$ for any effective Cartier divisor $G$ and any $L_i\in\Nef(X)$.

(2) $D\cdot H_1\cdots H_{n-1}=0$ for some nef and big $\mathbb{R}$-Cartier divisors $H_1,\cdots,H_{n-1}$.
\\
Then $D\equiv 0$.
\end{lemma}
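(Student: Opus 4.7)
The plan is to leverage condition (1), combined with the interior-point position of big classes in $\PEC(X)$, to propagate the single vanishing (2) through an induction, ultimately arriving at the hypotheses of Lemma~\ref{nrn1} for an ample polarization. First, since $\PEC(X)$ is by definition the closure in $\N^1(X)$ of effective Cartier classes and the intersection pairing is continuous, condition (1) upgrades at once to $D \cdot G \cdot L_1 \cdots L_{n-2} \ge 0$ for all $G \in \PEC(X)$ and all nef $\R$-Cartier $L_1,\dots,L_{n-2}$.

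Next I would prove, by induction on $k = 0, 1, \dots, n-1$, the statement
\[
(\ast_k) \qquad D \cdot M_1 \cdots M_k \cdot \prod_{i \notin S} H_i \ =\ 0 \quad\text{for every }\ S \subseteq \{1,\dots,n-1\}\ \text{ with }|S|=k\ \text{ and every } M_i \in \N^1(X).
\]
The base case $k = 0$ is condition (2). For the step $k \to k+1$, pick $S$ with $|S| = k+1$, fix $j \in S$, and set $S' := S \setminus \{j\}$. For arbitrary nef $M_1,\dots,M_k$, define
\[
\psi(M) \ := \ D \cdot M_1 \cdots M_k \cdot M \cdot \prod_{i \notin S} H_i .
\]
Applying the extended (1) with $G = M$ and the $n-2$ nef $L$-slots filled by $M_1,\dots,M_k$ and $\{H_i\}_{i \notin S}$ yields $\psi \ge 0$ on $\PEC(X)$; meanwhile $\psi(H_j)$ equals the left-hand side of $(\ast_k)$ for $S'$, hence vanishes by induction. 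Since $H_j$ is big, it lies in the interior of $\PEC(X)$, and a linear functional nonnegative on a full-dimensional convex cone which vanishes at an interior point must vanish identically. So $\psi \equiv 0$ on $\N^1(X)$. As the nef cone spans $\N^1(X)$ (it contains the open ample cone) and the intersection form is multilinear, I extend slot-by-slot from nef $M_1,\dots,M_k$ to arbitrary classes in $\N^1(X)$, establishing $(\ast_{k+1})$.

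Specializing $(\ast_{n-1})$ with all arguments set to an ample class $A$ (respectively with one argument equal to $D$ and the rest equal to $A$) gives $D \cdot A^{n-1} = 0$ and $D^2 \cdot A^{n-2} = 0$; Lemma~\ref{nrn1} then forces $D \equiv 0$. I expect the main difficulty to lie in the bookkeeping of the inductive step: one must simultaneously arrange that every class placed in an $L$-slot is nef (which is why the $M_i$'s must be taken nef before the linearity extension), and that the interior-vanishing point $H_j$ has been removed from the $H$-product (which is exactly why $(\ast_k)$ is indexed by the \emph{removed} subset $S$, and why we move indices out of the product into the $M$-slots as $k$ grows). Once this indexing is set up, each inductive step is a short application of the interior-point principle together with multilinearity.
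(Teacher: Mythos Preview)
Your proof is correct and follows essentially the same route as the paper, which simply cites the proof of \cite[Lemma 2.2]{Na-Zh} to obtain $D\cdot A^{n-1}=0=D^2\cdot A^{n-2}$ for an ample $A$ and then invokes Lemma~\ref{nrn1}; your inductive replacement of the $H_i$ by arbitrary classes via the interior-point principle on $\PEC(X)$ is exactly that argument written out in full. One small remark: in the paper $\PEC(X)$ is defined as the closure of effective $\R$-Cartier (not integral Cartier) classes, so the passage from condition~(1) to ``$\psi\ge 0$ on $\PEC(X)$'' tacitly uses that big $\Q$-Cartier classes lie in the cone generated by effective integral Cartier divisors (Kodaira's lemma) and are dense in $\PEC(X)$; this is routine and does not affect your argument.
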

\begin{proof} By the proof of \cite[Lemma 2.2]{Na-Zh}, $D\cdot A^{n-1}=0$ and $D^2\cdot A^{n-2}=0$ for some ample divisor $A$. So $D\equiv 0$ by Lemma \ref{nrn1}.
\end{proof}

\begin{lemma}\label{ext}(cf.~\cite[Lemma 2.11]{Zh-comp})
Let $f : X \to X$ be a surjective endomorphism of a normal projective variety $X$ with $K_X$ being $\Q$-Cartier.
Let $\NE(X)$ be the closure of the convex cone generated by classes of effective $1$-cycles in $\N_1(X)$.
Let $R_C := \R_{\ge 0}[C] \subseteq \NE(X)$ be an extremal ray generated by a curve $C$ (not necessarily $K_X$-negative).
Then we have:
\begin{itemize}
\item[(1)]
$R_{f(C)}$ is an extremal ray.
\item[(2)]
If $C_1$ is another curve such that $f(C_1) = C$, then $R_{C_1}$ is an extremal ray.
\item[(3)]
Denote by $\Sigma_C$ the set of curves whose classes are in $R_C$.
Then $f(\Sigma_C) = \Sigma_{f(C)}$.
\item[(4)]
If $R_{C_1}$ is extremal, then $\Sigma_{C_1} = f^{-1}(\Sigma_{f(C_1)})
:= \{D \, | \, f(D) \in \Sigma_{f(C_1)}\}$.
\end{itemize}
\end{lemma}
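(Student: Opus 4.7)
The plan is to reduce all four parts to two structural facts about the action of the finite surjective endomorphism $f$ on $\N_1(X)$. First, by the discussion following Definition \ref{def-cones}, both $f_\ast$ and $f^\ast$ are $\R$-linear automorphisms of $\N_1(X)$ satisfying $f_\ast f^\ast = f^\ast f_\ast = (\deg f)\,\id$. Second, each preserves the cone $\NE(X)$: pushforward of an irreducible curve $D$ equals $\deg(f|_D)\,[f(D)]$, while pullback of an irreducible curve $D'$ equals $\sum_i m_i[C'_i]$ with $m_i > 0$ and $C'_i$ the irreducible components of the scheme-theoretic preimage $f^{-1}(D')$. Since $\NE(X)$ is a closed strictly convex cone, a linear automorphism preserving it permutes its extremal faces, so both $f_\ast$ and $f^\ast$ permute extremal rays.

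For (1), I would simply compute $f_\ast(R_C) = \R_{\ge 0}\,f_\ast[C] = \R_{\ge 0}\deg(f|_C)[f(C)] = R_{f(C)}$ and invoke the principle above. For (2), I would take $u, v \in \NE(X)$ with $u + v = t[C_1]$ where $t > 0$ (the case $t = 0$ is trivial since $\NE(X)$ contains no lines). Pushing forward yields $f_\ast u + f_\ast v = t\deg(f|_{C_1})\,[C] \in R_C$, so extremality of $R_C$ forces $f_\ast u = \lambda[C]$ and $f_\ast v = \mu[C]$ with $\lambda, \mu \ge 0$. Applying $(\deg f)^{-1}f^\ast$ gives $u = (\lambda/\deg f)\,f^\ast[C]$ and $v = (\mu/\deg f)\,f^\ast[C]$; summing and comparing with $t[C_1]$ forces $f^\ast[C]$ to be a positive scalar multiple of $[C_1]$, putting $u, v$ in $R_{C_1}$.

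For (3) and (4), the ``easy'' inclusions $f(\Sigma_C)\subseteq \Sigma_{f(C)}$ and $\Sigma_{C_1}\subseteq f^{-1}(\Sigma_{f(C_1)})$ follow immediately from $f_\ast[D] = \deg(f|_D)[f(D)]$. For the reverse inclusion in (3), given $D \in \Sigma_{f(C)}$, I would note that $f^\ast[D] \in f^\ast R_{f(C)} = (\deg f)\cdot R_C = R_C$ (since $f_\ast R_C = R_{f(C)}$); writing $f^\ast[D] = \sum_i m_i[C'_i]$ with $m_i > 0$ and $[C'_i]\in \NE(X)$, extremality of $R_C$ forces each $[C'_i]\in R_C$, so every $C'_i$ lies in $\Sigma_C$ and maps onto $D$. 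For (4), if $f(D) \in \Sigma_{f(C_1)}$ then $f_\ast[D] \in R_{f(C_1)}$, and $[D] = (\deg f)^{-1}f^\ast f_\ast[D]$ is a non-negative multiple of $f^\ast[f(C_1)]$; since $R_{C_1}$ is extremal by (2), the same argument places $f^\ast[f(C_1)]$ and hence $[D]$ in $R_{C_1}$.

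The main technical point I expect will be the cone-theoretic lemma implicit in every part of the argument: in a strictly convex cone, if $\sum_i m_i x_i$ belongs to an extremal ray $R$ with all $m_i > 0$ and $x_i$ in the cone, then each $x_i \in R$. This follows by induction from the two-summand case, but it is the crux that converts relations among pulled-back cycle classes into statements about individual components of $f^{-1}(D)$, which need not be mapped compatibly with the ray structure and are only controlled through extremality combined with the strict convexity of $\NE(X)$.
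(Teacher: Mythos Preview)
The paper does not supply its own proof of this lemma; it is stated with a citation to \cite[Lemma 2.11]{Zh-comp}. Your argument is correct and is essentially the standard one: everything reduces to the facts that $f_\ast$ and $f^\ast$ are mutually inverse (up to the scalar $\deg f$) linear automorphisms of $\N_1(X)$, each preserving the strictly convex closed cone $\NE(X)$, together with the explicit formulas $f_\ast[D]=\deg(f|_D)\,[f(D)]$ and $f^\ast[D']=\sum_i m_i[C'_i]$ for the components $C'_i$ of $f^{-1}(D')$. The extremality principle you isolate at the end is indeed the crux.

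One minor slip: in your treatment of (4) you write ``$R_{C_1}$ is extremal by (2)'', but in (4) the extremality of $R_{C_1}$ is the \emph{hypothesis}, not a consequence of (2). In fact your own computation for (4) does not use that hypothesis at all: from $f_\ast[C_1]=\deg(f|_{C_1})\,[f(C_1)]$ and $f^\ast f_\ast=(\deg f)\,\id$ you get immediately that $f^\ast[f(C_1)]$ is a positive multiple of $[C_1]$, so $[D]=(\deg f)^{-1}f^\ast f_\ast[D]\in R_{C_1}$ follows directly without appealing to extremality or to the decomposition of $f^\ast[f(C_1)]$ into components.
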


Let $f:X\to Y$ be a surjective morphism between normal projective varieties.
Then $f$ has connected fibres if and only if $f_\ast\mathcal{O}_{X}=\mathcal{O}_Y$ (cf.~\cite[Chapter III, Corollary 11.3 and Corollary 11.5]{Har}) and hence the composition of two such morphisms still has connected fibres.
In particular, the general fibre of $f$ is connected if and only if all the fibres are connected and hence a birational morphism between normal projective varieties always has connected fibres (cf.~\cite[Chapter III, Corollary 11.4]{Har}).
Suppose that $f$ has connected fibres.
Let $X'$ be a resolution of $X$ with $f':X'\to Y$ the induced morphism. Then $f'$ has connected fibres and the general fibre of $f'$ is smooth by generic smoothness (cf.~\cite[Chapter III, Corollary 10.7]{Har}).
In particular, the general fibre of $f'$ is irreducible.
Note that each fibre of $f$ is an image of $f'$.
So the general fibre of $f$ is irreducible.

\begin{definition}\label{def-gen-rc}
Let $f:X\dashrightarrow Y$ be a dominant map between two normal projective varieties and $f_{\bar{\Gamma}}:\bar{\Gamma}\to Y$ the induced morphism with $\bar{\Gamma}$ the normalization of the graph of $f$.
We say that $f$ has {\it the general fibre rationally connected} if the general fibre of $f_{\bar{\Gamma}}$ is rationally connected.

Let $f_1:X_1\to Y_1$ be a surjective morphism birationally equivalent to $f$ with $X_1$ and $Y_1$ being normal projective.
Then $f$ has the general fibre rationally connected if and only if so does $f_1$.
This is because rational connectedness is a birational property (cf.~\cite[Chapter IV, Proposition 3.6]{Ko}).
\end{definition}

\begin{lemma}\label{rc+rc} Let $f:X\dashrightarrow Y$ and $g:Y\dashrightarrow Z$ be two dominant maps between normal projective varieties.
Suppose that $f$ and $g$ have the general fibre rationally connected.
Then $g\circ f$ has the general fibre rationally connected.
\end{lemma}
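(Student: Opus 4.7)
The plan is to reduce to the case of morphisms via the birational invariance noted in Definition \ref{def-gen-rc}, and then to apply the theorem of Graber--Harris--Starr \cite{GHS} fibrewise. Explicitly, I first replace $f$ by the morphism $f':X'\to Y$ arising from the normalization of its graph, and $g$ by the morphism $g':Y'\to Z$ arising from the normalization of its graph; both still have rationally connected general fibre by Definition \ref{def-gen-rc}. To splice them into a composition of morphisms I take $X''$ to be the normalization of the dominant component of $X'\times_Y Y'$. Because $Y'\to Y$ is birational, so is $X''\to X'$, and the projection $f'':X''\to Y'$ has the same generic fibre as $f'$, hence still rationally connected general fibre. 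Since the morphism $g'\circ f'':X''\to Z$ is birationally equivalent to $g\circ f$, Definition \ref{def-gen-rc} reduces the statement to showing that $g'\circ f''$ has rationally connected general fibre.

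Next I analyse a general fibre of $h:=g'\circ f''$. Because $f''$ and $g'$ each have irreducible (hence connected) general fibres, the paragraph preceding Definition \ref{def-gen-rc} implies they both have connected fibres; the composition $h$ therefore also has connected fibres and its general fibre is irreducible. I then pick opens $V\subseteq Z$ over which every fibre of $g'$ is rationally connected and $U\subseteq Y'$ over which every fibre of $f''$ is rationally connected, shrinking $V$ so that for each $z\in V$ the intersection $(g')^{-1}(z)\cap U$ is dense in $(g')^{-1}(z)$. For such $z$ the morphism $h^{-1}(z)\to (g')^{-1}(z)$ then has rationally connected general fibre, while the base $(g')^{-1}(z)$ is itself rationally connected.

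Finally, after passing to a common resolution of singularities — legitimate because rational connectedness is a birational invariant (cf.~\cite[Chapter IV, Proposition 3.6]{Ko}) — I apply the Graber--Harris--Starr theorem \cite{GHS} to the restriction $h^{-1}(z)\to (g')^{-1}(z)$ to conclude that $h^{-1}(z)$ is rationally connected, which is the required conclusion. The step I expect to require most care is the setup of $U$ and $V$ and the verification that a general fibre of $h$ is irreducible and dominates a general fibre of $g'$; this however is essentially bookkeeping once the Stein-factorization / connected-fibre discussion recorded just before Definition \ref{def-gen-rc} is invoked, so the genuine mathematical input is the fibrewise application of Graber--Harris--Starr.
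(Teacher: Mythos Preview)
Your proposal is correct and follows essentially the same strategy as the paper: reduce to a composition of morphisms and apply Graber--Harris--Starr fibrewise over a general point of $Z$. The paper is terser in the reduction step---it simply replaces $X,Y,Z$ by smooth projective models so that $f$ and $g$ become morphisms (legitimate by the birational invariance recorded in Definition~\ref{def-gen-rc}) and then uses generic smoothness to ensure the fibre $X_z$ is smooth---whereas you spell out the graph/fibre-product construction and postpone resolution to the end; but the mathematical content is the same.
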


\begin{proof} We may assume $f$ and $g$ are surjective morphisms between smooth projective varieties.
Let $U$ be an open dense subset of $Y$ such that each fibre of $f$ over $U$ is rationally connected.
Let $V$ be an open dense subset of $Z$ such that $g\circ f$ is smooth over $V$ and each fibre of $g$ over $V$ is rationally connected and has a nonempty intersection with $U$.
Then for any $z\in V$, we have a natural surjective morphism $f_z:X_z\to Y_z$, where $X_z=(g\circ f)^{-1}(z)$ while $Y_z=g^{-1}(z)$ is rationally connected.
Since $Y_z\cap U\neq\emptyset$, $f_z$ has the general fibre rationally connected.
By \cite[Corollary 1.3]{GHS}, $X_z$ is rationally connected.
So $g\circ f$ has the general fibre rationally connected.
\end{proof}

\begin{lemma}\label{mmp-rc} Let $X$ be a $\Q$-factorial klt normal projective variety.
Suppose that $X=X_1\dashrightarrow\cdots \dashrightarrow X_r=Y$ is a sequence of divisorial contractions, flips or Fano contractions, of $K_{X_i}$-negative extremal rays.
Let $f:X\dashrightarrow Y$ be the composition.
Then $f$ has the general fibre rationally connected.
\end{lemma}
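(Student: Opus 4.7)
The plan is to reduce to Lemma \ref{rc+rc} and then verify the rational-connectedness of the general fibre for each of the three elementary MMP steps.

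By Lemma \ref{rc+rc} applied inductively, it suffices to show that each single step $\pi_i : X_i \dashrightarrow X_{i+1}$ has the general fibre rationally connected in the sense of Definition \ref{def-gen-rc}. Since each $X_i$ is obtained from the preceding $\Q$-factorial klt variety by a divisorial contraction, flip, or Fano contraction of a $K_{X_i}$-negative extremal ray, the $\Q$-factorial klt property is preserved throughout the sequence (cf.~\cite{KM}), so at every stage the hypotheses needed below hold.

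I would split into three cases. If $\pi_i$ is a divisorial contraction, it is a birational morphism, so over a nonempty open subset of $X_{i+1}$ disjoint from the image of the exceptional divisor, the fibre is a single point; a point is trivially rationally connected. If $\pi_i$ is a flip, it is an isomorphism in codimension one, so the normalization $\bar\Gamma$ of its graph agrees with $X_i$ (and with $X_{i+1}$) outside a codimension-two subset, and again the general fibre of $\bar\Gamma \to X_{i+1}$ is a point. Finally, if $\pi_i : X_i \to X_{i+1}$ is a Fano contraction of a $K_{X_i}$-negative extremal ray, then $-K_{X_i}$ is $\pi_i$-ample, so a general fibre $F$ satisfies $-K_F = -K_{X_i}|_F$ ample, and the klt property restricts to $F$ by Bertini-type arguments for klt singularities; hence $F$ is a klt $\Q$-Fano variety, and by Q.~Zhang's theorem on rational connectedness of log $\Q$-Fano varieties, $F$ is rationally connected.

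The only non-formal input is the Fano-contraction case, where the rational connectedness of the general fibre rests on Zhang's theorem; this is the one step that is not elementary. Once that is cited, combining the three cases with Lemma \ref{rc+rc} (applied iteratively to the compositions $X_1 \dashrightarrow X_i \dashrightarrow X_{i+1}$) gives that $f = \pi_{r-1} \circ \cdots \circ \pi_1$ has the general fibre rationally connected, completing the proof.
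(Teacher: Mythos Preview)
Your proof is correct and follows essentially the same approach as the paper: reduce via Lemma~\ref{rc+rc} to a single step, observe that the birational steps (divisorial contractions and flips) have general fibre a point, and invoke Q.~Zhang's theorem \cite{qZh} for the Fano contraction case. The paper's proof is simply terser, omitting the explicit verification for the birational steps and the preservation of the $\mathbb{Q}$-factorial klt condition that you spell out.
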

\begin{proof} By Lemma \ref{rc+rc}, it suffices to consider the case when $f$ is a Fano contraction.
Then the general fibre of $f$ is a klt Fano variety and hence rationally connected by \cite{qZh}.
\end{proof}

\section{Properties of (quasi-) polarized endomorphisms}

\begin{lemma}\label{basic-deg}Let $f:X\to X$ be a surjective endomorphism of a projective variety $X$, such that
$f^{\ast}H\equiv qH$ for some nef and big $\mathbb{R}$-Cartier divisor $H$ and $q>0$.
Then $\deg f=q^{\dim(X)}.$
\end{lemma}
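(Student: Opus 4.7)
The plan is to compare two evaluations of the top self-intersection number $(f^{\ast}H)^{n}$, where $n=\dim(X)$. Recall from the introduction that any surjective endomorphism of a projective variety is automatically a finite morphism, so $\deg f$ is a well-defined positive integer and the projection formula is available for $f$.

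The first evaluation uses the hypothesis directly. Since numerical equivalence of $\mathbb{R}$-Cartier divisors is preserved under top intersection (intersection numbers with any fixed collection of Cartier classes are numerical invariants by definition, and extend by $\mathbb{R}$-linearity), the relation $f^{\ast}H\equiv qH$ gives
\[
(f^{\ast}H)^{n} \;=\; (qH)^{n} \;=\; q^{n}\, H^{n}.
\]
The second evaluation uses the projection formula applied to the finite surjective map $f$: for any $\mathbb{R}$-Cartier divisor $D$ one has $f_{\ast}(f^{\ast}D)^{n} = (\deg f)\, D^{n}$ at the level of $0$-cycles, which in degree form reads
\[
(f^{\ast}H)^{n} \;=\; (\deg f)\cdot H^{n}.
\]

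Combining these two identities yields $(\deg f)\, H^{n} = q^{n}\, H^{n}$. To finish, I invoke the standard fact that a nef and big $\mathbb{R}$-Cartier divisor satisfies $H^{n}>0$ (this is the asymptotic Riemann--Roch / numerical characterization of bigness for nef classes). Cancelling $H^{n}$ then gives $\deg f = q^{n}$, as claimed.

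The argument is essentially a one-line computation, so there is no serious obstacle; the only care needed is (i) to justify that intersection identities and the projection formula extend from integral Cartier divisors to $\mathbb{R}$-Cartier divisors by linearity, and (ii) to cite the positivity $H^{n}>0$ for nef and big $H$. Both are standard, and neither requires the normality of $X$ nor the condition $q>1$, so the statement holds in the full generality claimed.
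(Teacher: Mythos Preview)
Your proof is correct and follows exactly the same approach as the paper: compute $(f^{\ast}H)^{n}$ in two ways via the projection formula and the hypothesis, then cancel using $H^{n}>0$. The paper's version is simply the one-line compression of your argument (with a brief remark that one may assume $n>0$).
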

\begin{proof}
We may assume $n := \dim(X) > 0$.
By the projection formula, $(f^\ast H)^n=(\deg f) H^n=q^nH^n$.
Since $H^n>0$, $\deg f=q^n$.
\end{proof}

\begin{lemma}\label{abelian-deg} Let $f:A \to A$ be a surjective endomorphism of an abelian variety $A$.
Let $Z$ be a subvariety of $A$ such that $f^{-1}(Z) = Z$.
Then $\deg f|_Z = \deg f$.
\end{lemma}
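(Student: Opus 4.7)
The plan is to exploit the rigidity of morphisms between abelian varieties to reduce everything to counting points in fibres.

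First I would observe that $f|_Z : Z \to Z$ is a well-defined surjective endomorphism. Indeed, the hypothesis $f^{-1}(Z)=Z$ gives $f(Z)=f(f^{-1}(Z))=Z$ because $f$ is surjective on $A$, and simultaneously forces $f^{-1}(z)\subseteq Z$ for every $z\in Z$. In particular, for every $z\in Z$ the scheme-theoretic fibre of $f|_Z$ over $z$ coincides with the scheme-theoretic fibre of $f$ over $z$, viewed inside $A$. This is the main structural observation the proof will rest on.

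Next I would invoke the classical rigidity lemma for abelian varieties, which says that any morphism $f:A\to A$ of varieties can be written as $f = T_a\circ \phi$, where $\phi:A\to A$ is a group homomorphism and $T_a$ is translation by some $a\in A$. Surjectivity of $f$ forces $\phi$ to be an isogeny, so every set-theoretic fibre of $f$ is a translate of $\ker\phi$ and hence has exactly $|\ker\phi|$ points. Combined with $\deg\phi = |\ker\phi|$ in characteristic zero, every (reduced) fibre of $f$ has cardinality exactly $\deg f$.

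Finally, picking a general point $z$ in $Z$ so that $f|_Z$ is étale at every point of $(f|_Z)^{-1}(z)$ (possible since we are in characteristic zero and $f|_Z$ is generically étale), we have
\[
\deg(f|_Z) \;=\; \#\bigl((f|_Z)^{-1}(z)\bigr) \;=\; \#\bigl(f^{-1}(z)\cap Z\bigr) \;=\; \#\bigl(f^{-1}(z)\bigr) \;=\; \deg f,
\]
using in the third equality that $f^{-1}(z)\subseteq Z$ by the first paragraph. I do not anticipate any genuine obstacle: the only point requiring a small care is justifying the rigidity decomposition $f = T_a\circ\phi$ (or, equivalently, arguing directly that all fibres of $f$ have the same cardinality), which is standard for abelian varieties.
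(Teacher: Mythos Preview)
Your proof is correct and follows the same overall strategy as the paper: reduce to the observation that the fibres of $f|_Z$ coincide with those of $f$, and then show every fibre of $f$ has exactly $\deg f$ points. The only difference is in how this last fact is established. The paper argues that $f$ is \'etale (the ramification divisor formula gives $R_f=0$ since $K_A\sim 0$, and purity of branch loci upgrades this to \'etaleness), whence every fibre is reduced of cardinality $\deg f$. You instead invoke the rigidity lemma to write $f=T_a\circ\phi$ with $\phi$ an isogeny, so every fibre is a translate of $\ker\phi$. In characteristic zero these are equivalent (an isogeny is \'etale), so the two arguments are really the same fact viewed from two angles. Your final step of choosing a \emph{general} $z\in Z$ is in fact unnecessary: since $f$ is \'etale, the scheme-theoretic fibre of $f|_Z$ over \emph{every} $z\in Z$ is $f^{-1}(z)$, which is reduced of length $\deg f$, so $\deg f|_Z=\deg f$ without appealing to generic \'etaleness of $f|_Z$.
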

\begin{proof} $f$ is \'etale by the ramification divisor formula and the purity of branch loci.
Then $\deg f|_Z=|f^{-1}(z)|=\deg f$ for any $z\in Z$.
\end{proof}

\begin{lemma}\label{sub-deg} Let $f:X\to X$ be a polarized endomorphism of a projective variety $X$ with $\deg f=q^{\dim(X)}$ and let $Z$ be a closed subvariety of $X$ with $f(Z)=Z$.
Then $\deg f|_Z=q^{\dim(Z)}$.
\end{lemma}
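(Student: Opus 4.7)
The plan is to restrict the polarization to $Z$ and apply Lemma \ref{basic-deg} to $f|_Z$. Since $f$ is polarized, there is an ample Cartier divisor $A$ on $X$ with $f^{\ast} A\sim qA$, and by Lemma \ref{basic-deg} applied to $f$ one recovers $\deg f = q^{\dim(X)}$ (consistent with the hypothesis). Because $f(Z) = Z$ and $f$ is a finite surjective morphism (being a surjective endomorphism of a projective variety), the restriction $f|_Z : Z \to Z$ is a surjective, and hence finite, endomorphism of $Z$.

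Let $i : Z \hookrightarrow X$ denote the closed immersion. The key compatibility is the identity of pullbacks
\[
(f|_Z)^{\ast}(A|_Z) \;=\; (f|_Z)^{\ast} i^{\ast} A \;=\; (i \circ f|_Z)^{\ast} A \;=\; (f \circ i)^{\ast} A \;=\; i^{\ast} f^{\ast} A \;=\; (f^{\ast} A)|_Z,
\]
which is just the functoriality of pullback along $f \circ i = i \circ f|_Z$. Combined with $f^{\ast} A \sim qA$, this gives $(f|_Z)^{\ast}(A|_Z) \sim q(A|_Z)$ as linear equivalence of Cartier divisors on $Z$.

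Next, observe that $A|_Z$ is an ample Cartier divisor on $Z$ (the restriction of an ample Cartier divisor to a closed subvariety is ample), and in particular nef and big. Thus $f|_Z$ is a surjective endomorphism of the projective variety $Z$ satisfying the hypothesis of Lemma \ref{basic-deg} with respect to $A|_Z$ and the same rational number $q$. Applying that lemma yields $\deg f|_Z = q^{\dim(Z)}$, which is the desired conclusion.

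There is no real obstacle here; the one point worth verifying carefully is the functoriality identity $(f|_Z)^{\ast}(A|_Z) = (f^{\ast}A)|_Z$ at the level of linear equivalence classes (as opposed to merely numerical classes), which follows because the argument is purely a composition of pullbacks of invertible sheaves.
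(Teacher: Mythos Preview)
Your proof is correct and follows essentially the same approach as the paper: restrict the ample polarization $H$ to $Z$, observe that $(f|_Z)^\ast(H|_Z)\sim q(H|_Z)$ with $H|_Z$ ample, and apply Lemma \ref{basic-deg}. The paper's version is more terse, but the content is identical.
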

\begin{proof} We may assume $f^\ast H\sim qH$ for some ample divisor $H$ of $X$.
Then $H|_Z$ is also an ample divisor of $Z$ and $(f|_Z)^\ast(H|_Z)\sim q(H|_Z)$.
So $\deg f|_Z=q^{\dim(Z)}$ by Lemma \ref{basic-deg}.
\end{proof}

%

The following lemma is true for not-necessarily normal projective varieties by using the same proof of \cite[Lemma 2.3]{Na-Zh}.
\begin{lemma}\label{NZ-linear} (cf.~\cite[Lemma 2.3]{Na-Zh}) Let $f:X\to X$ be a surjective endomorphism of a projective variety $X$ such that $f^\ast H\equiv qH$ for some integer $q>1$ and ample Cartier divisor $H$. Then there is an ample Cartier divisor $H'\equiv H$, such that $f^\ast H'\sim qH'$. In particular, $f$ is polarized.
\end{lemma}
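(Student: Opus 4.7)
The plan is to modify $H$ within its numerical equivalence class by a numerically trivial Cartier divisor. Put $D := f^{\ast}H - qH$; by hypothesis $D \equiv 0$, so the class $[D]$ lies in $\Pic^{\tau}(X)$, the subgroup of $\Pic(X)$ consisting of classes $\tau$-equivalent to $0$. If I can find $E \in \Pic^{\tau}(X)$ with $(f^{\ast} - q\,\id)(E) = -[D]$ in $\Pic(X)$, then $H' := H + E$ satisfies $f^{\ast}H' - qH' \sim 0$, while $H' \equiv H$ remains ample by openness of the ample cone. The whole problem therefore reduces to showing that the group homomorphism
\[
\psi := f^{\ast} - q\,\id \; : \; \Pic^{\tau}(X) \longrightarrow \Pic^{\tau}(X)
\]
is surjective.

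Next I would exploit the algebraic-group structure of $\Pic^{\tau}(X)$: its identity component $\Pic^{0}(X)$ is a connected commutative algebraic group (an abelian variety after passing to a smooth resolution $\tilde X \to X$), while the component group $\Pic^{\tau}(X)/\Pic^{0}(X)$ is finite. Both pieces are $\psi$-stable, so by the snake lemma it suffices to verify surjectivity of $\psi$ on each factor.

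On the connected part, the differential of $\psi$ at the origin is $f^{\ast} - q\,\id$ acting on the tangent space, which is naturally a subquotient of $H^{1}(\tilde X, \mathcal{O}_{\tilde X})$. Because the hypothesis $f^{\ast}H \equiv qH$ with $H$ ample and $q > 1$ is exactly the polarized condition, Serre's theorem on polarized endomorphisms gives that all eigenvalues of $f^{\ast}$ on $H^{1}(\tilde X, \mathbb{C})$ have modulus $\sqrt{q} < q$. Hence $q$ is not an eigenvalue of $d\psi_{0}$, so the kernel of $\psi|_{\Pic^{0}}$ is finite; therefore $\psi|_{\Pic^{0}}$ is an isogeny, hence surjective. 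On the finite component group the same eigenvalue constraint (applied via $\ell$-adic cohomology, or directly using that $f^{\ast}$ acts on this finite abelian group by a permutation whose eigenvalues are roots of unity) shows $\psi$ is injective, hence bijective.

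The main anticipated obstacle is establishing the eigenvalue estimate on $H^{1}$ in the possibly non-normal setting. I would handle this by pulling back to a smooth resolution $\pi : \tilde X \to X$: the induced self-map $\tilde f$ of $\tilde X$ is again polarized with polarization $\pi^{\ast}H$, and the pullback $\pi^{\ast} : \Pic^{\tau}(X) \to \Pic^{\tau}(\tilde X)$ has finite kernel and cokernel by the standard comparison, so the eigenvalue constraint on $\Pic^{0}(X)$ descends from that on $\Pic^{0}(\tilde X)$. Once surjectivity of $\psi$ is in hand, choosing any $E$ with $\psi(E) = -[D]$ and setting $H' := H + E$ produces the required ample Cartier divisor with $f^{\ast}H' \sim qH'$ and $H' \equiv H$; the final assertion that $f$ is polarized is then immediate from the definition.
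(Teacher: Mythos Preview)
Your overall strategy is exactly the one the paper invokes (the argument of \cite[Lemma~2.3]{Na-Zh}): set $D=f^*H-qH\in\Pic^\tau(X)$ and show that $\psi=f^*-q\,\id$ is surjective on $\Pic^\tau(X)$, then take $H'=H+E$ for any $E$ with $\psi(E)=-[D]$. So the reduction and the decomposition into $\Pic^0$ and the finite component group are on target.

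The genuine gap is in how you handle $\Pic^0(X)$ in the singular case. You write that the ``induced self-map $\tilde f$ of $\tilde X$ is again polarized with polarization $\pi^*H$''. But a surjective endomorphism of $X$ does \emph{not} lift to an endomorphism of a smooth resolution $\tilde X$ in general; at best you get a dominant rational self-map, for which there is no action on $H^1(\tilde X,\mathcal{O}_{\tilde X})$ of the kind you need, and ``polarized'' is not even defined. Relatedly, your claim that $\pi^*:\Pic^\tau(X)\to\Pic^\tau(\tilde X)$ has finite kernel and cokernel is false: for a nodal cubic $X$ with normalization $\tilde X=\mathbb{P}^1$ one has $\Pic^0(X)\cong\mathbb{G}_m$ while $\Pic^0(\tilde X)=0$, so the kernel is positive-dimensional. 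Hence the eigenvalue bound on $\tilde X$ cannot be transported to $X$ the way you propose.

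The right repair is to avoid resolutions altogether. First lift $f$ to the \emph{normalization} $X^\nu$ (which always works, by the universal property) to reduce to normal $X$; for normal projective $X$ in characteristic~$0$ the scheme $\Pic^0(X)$ is already an abelian variety, so no further model is needed. The eigenvalue bound on its tangent space then comes from the induced genuine endomorphism of $\Alb(X)$ (Corollary~\ref{mainthm} or the Na--Zh argument), whose analytic eigenvalues on $H^1$ have modulus $\sqrt{q}$, and duality passes this to $\Pic^0(X)$. Finally, your treatment of the finite component group is not yet a proof: saying the eigenvalues of $f^*$ on a finite abelian group are ``roots of unity'' does not make sense as stated, and $f^*-q$ can have nonzero kernel there (e.g.\ if $f^*$ fixes an element of order dividing $q-1$). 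One either replaces $H$ by a suitable multiple to push $D$ into $\Pic^0(X)$ before solving, or argues more carefully using $f_*f^*=(\deg f)\,\id$ on $\Pic(X)$ to control the torsion.
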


\begin{lemma}\label{R-int} Let $f:X\to X$ be a surjective endomorphism of a projective variety $X$.
Suppose that there is an ample $\mathbb{R}$-Cartier divisor $H$ such that $f^{\ast}H\equiv qH$ for some rational number $q>0$.
Then $q$ is an integer and $f^{\ast}H'\equiv qH'$ for some ample Cartier (integral) divisor $H'$.
\end{lemma}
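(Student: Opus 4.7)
The plan is to use that $f^{\ast}$ preserves the integral lattice $\NS(X)/\torsion$ inside $\N^1(X)$, together with openness of the ample cone and $\mathbb{Q}$-density in the eigenspace.

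First, I would observe that the pullback $f^{\ast}: \N^1(X)\to \N^1(X)$ is an isomorphism that preserves the full rank lattice $N := \NS(X)/\torsion$. Fixing a $\Z$-basis of $N$, the map $f^{\ast}$ is represented by an invertible matrix with integer entries, and hence its characteristic polynomial is monic with integer coefficients. The hypothesis $f^{\ast}H \equiv qH$ with $H$ ample forces $[H]\ne 0$ in $\N^1(X)$, so $q$ is an eigenvalue of $f^{\ast}$. By the rational root theorem, any rational root of a monic integer polynomial is an integer; therefore $q \in \Z$.

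Next, consider the eigenspace
\[
E_q := \ker(f^{\ast} - q\cdot\id) \subseteq \N^1(X).
\]
Since $f^{\ast}-q\cdot\id$ is represented by an integer matrix (using that $q\in\Z$), its kernel is a $\Q$-rational subspace of $\N^1(X)$; equivalently, $E_q \cap (\NS(X)\otimes_{\Z}\Q)$ is $\Q$-dense in $E_q$. Now $[H]\in E_q$ lies in $\Amp(X)$, which is an open convex cone in $\N^1(X)$; its intersection with the linear subspace $E_q$ is open in $E_q$ and contains $[H]$. By $\Q$-density, I can choose a class $[H_0]\in E_q \cap \Amp(X)$ with $H_0$ a $\Q$-Cartier divisor. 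Clearing denominators, $H' := mH_0$ is an integral Cartier divisor for some $m\in\Z_{>0}$, it remains ample, and $[H']\in E_q$, i.e. $f^{\ast}H' \equiv qH'$, as desired.

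The argument has essentially no obstacle beyond the initial rational-root observation; the only point one must be a little careful about is that the map $f^{\ast}$ acts on the integer lattice $\NS(X)/\torsion$, not merely on $\N^1(X)$, which is what makes the characteristic polynomial monic integral and hence forces rational eigenvalues to be integers. Everything else is standard openness/density in a finite dimensional real vector space with a distinguished $\Q$-structure.
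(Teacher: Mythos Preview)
Your proof is correct but follows a different, more elementary route than the paper's. For the integrality of $q$, the paper invokes Lemma~\ref{basic-deg} to get $q^{\dim X}=\deg f$, so $q$ is an algebraic integer and hence (being rational) an integer; you instead use that $f^{\ast}$ preserves the lattice $\NS(X)/\torsion$, so its characteristic polynomial is monic integral and the rational root theorem applies. For the ample integral eigenvector, the paper brings in the averaging projector $F_\infty$ from Proposition~\ref{cone}: it shows $F_\infty(H+\Amp(X))$ is open in the eigenspace $W$, picks a $\Q$-rational point $H'=F_\infty(H+D)$ there, and checks $H'=H+F_\infty(D)$ is ample (ample plus nef). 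Your argument bypasses Proposition~\ref{cone} entirely by observing that $E_q\cap\Amp(X)$ is already open and nonempty in $E_q$, then using $\Q$-density directly. Your approach is shorter and self-contained; the paper's fits its broader cone-analysis framework and reuses machinery already developed.
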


\begin{proof} By Lemma \ref{basic-deg}, $q^{\dim(X)}=\deg f$. So $q$ is an algebraic integer and also rational by assumption. Hence, $q$ is an integer.

Let $W\subseteq \N^1(X)$ be the eigenspace of $f^{\ast}|_{\N^1(X)}$ with eigenvalue $q$ and $W_{\mathbb{Q}}$ the set of all $\mathbb{Q}$-Cartier divisor classes in $W$.
Note that $f^{\ast}|_{\N^1(X)}$ is determined by $f^{\ast}|_{\NS_{\mathbb{Q}}(X)}$.
So $W_{\mathbb{Q}}$ is dense in $W$.
Set $F_\infty:=\lim\limits_{n\to +\infty}\frac{1}{n}\sum\limits_{i=0}^{n-1}\frac{(f^i)^\ast|_{\N^1(X)}}{q^i}$.
By Proposition \ref{cone}, $F_\infty$ is a well-defined projection from $\N^1(X)$ onto $W$.
Note that $\N^1(X)$ is spanned by $\Amp(X)$, $F_\infty(W)=W$, and $F_\infty$ is an open map from $\N^1(X)$ onto $W$ by Proposition \ref{cone}.
Then $F_\infty(H+\Amp(X))\cap W_{\mathbb{Q}}\neq \emptyset$.
In particular, $H':=F_\infty(H+D)$ is an ample $\mathbb{Q}$-Cartier divisor for some $D\in \Amp(X)$ and $f^{\ast}[H']= q[H']$ in $\N^1(X)$.
Replacing $H'$ by a multiple, we are done.
\end{proof}

\begin{proposition}\label{big-polar} Let $f:X\to X$ be a surjective endomorphism of a projective variety $X$ and $q>0$ a rational number.
Suppose one of the following is true.
\begin{itemize}
\item[(1)] $f^\ast H \equiv qH$ for some big $\mathbb{R}$-Cartier divisor $H$.
\item[(2)] $X$ is normal and $f^\ast H\equiv_w qH$ for some big Weil $\mathbb{R}$-divisor $H$.
\end{itemize}
Then $q$ is an integer and $f^\ast A\equiv qA$ for some ample Cartier divisor $A$.
Further, if $q>1$, then $f$ is polarized.
In particular, quasi-polarized endomorphisms are polarized.
\end{proposition}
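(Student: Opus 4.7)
The strategy is to reduce both hypotheses to producing an ample $\R$-Cartier divisor $A$ with $f^\ast A\equiv qA$. Once such an $A$ is in hand, Lemma \ref{R-int} forces $q\in\Z$ and yields an ample (integral) Cartier divisor $A'$ with $f^\ast A'\equiv qA'$; if moreover $q>1$, Lemma \ref{NZ-linear} replaces $A'$ by a numerically equivalent ample Cartier divisor $A''$ with $f^\ast A''\sim qA''$, so $f$ is polarized. A quasi-polarized endomorphism falls under (1) and is therefore polarized automatically.

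The core is a two-step use of Proposition \ref{cone}. First, I exploit the bigness of $H$ to obtain a norm bound on $f^\ast$. In case (1), big $\R$-Cartier classes are exactly the interior points of $\PEC(X)\subseteq \N^1(X)$, and $\PEC(X)$ is a closed convex cone which spans $\N^1(X)$, contains no line, and is $(f^\ast)^{\pm 1}$-invariant; hence the direction $(1)\Rightarrow(2)$ of Proposition \ref{cone} applied to $(\N^1(X),\PEC(X))$ yields a uniform bound $\|(f^\ast)^i\|/q^i<N$ for every $i\in\Z$. In case (2), I run the same reasoning in $(\N_{n-1}(X),\PE(X))$, using that $\PE(X)$ is $(f^\ast)^{\pm 1}$-invariant via $f_\ast f^\ast=(\deg f)\,\id$ and that big Weil $\R$-divisors form the interior of $\PE(X)$; restriction along the $f^\ast$-equivariant injection $\N^1(X)\hookrightarrow\N_{n-1}(X)$ from Lemma \ref{nrn1} then transports the bound to $\N^1(X)$. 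Second, I feed this bound into the reverse implication $(2)\Rightarrow(1)$ of Proposition \ref{cone}, applied this time to the nef cone $(\N^1(X),\Nef(X))$. This cone spans $\N^1(X)$, contains no line, and satisfies $f^\ast(\Nef(X))=\Nef(X)$: indeed pullback of nef by a finite morphism is nef, and the projection formula $f^\ast D\cdot C=D\cdot f_\ast C$ together with $f_\ast f^\ast=(\deg f)\,\id$ gives the reverse inclusion. Hence there exists an eigenvector $A\in\Nef(X)^{\circ}=\Amp(X)$ with $f^\ast A=qA$, completing the reduction.

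The only real subtlety is that $H$ is assumed merely big, not ample, so one cannot directly imitate the proof of Lemma \ref{R-int} using $H$ itself in place of $A$. The decisive observation is that the boundedness of $\|(f^\ast)^i\|/q^i$ is intrinsic to the operator $f^\ast$ once it has been witnessed by \emph{any} $(f^\ast)^{\pm 1}$-invariant cone containing $H$ in its interior; the same bound can then be fed into Proposition \ref{cone} a second time with a \emph{different} cone, namely $\Nef(X)$, whose interior consists of the ample classes. This decoupling of the cone used to witness the norm bound from the cone used to extract the ample eigenvector is the heart of the argument.
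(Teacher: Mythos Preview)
Your proposal is correct and follows essentially the same approach as the paper: apply Proposition~\ref{cone} first to the pseudo-effective cone ($\PEC(X)$ in case~(1), $\PE(X)$ in case~(2), transporting the bound to $\N^1(X)$ via Lemma~\ref{nrn1}) to obtain the norm bound, then apply it again to $\Nef(X)$ to extract an ample eigenvector, and finish with Lemmas~\ref{R-int} and~\ref{NZ-linear}. Your explicit articulation of the ``decoupling'' of the two cones is exactly the mechanism the paper uses, only stated more verbosely.
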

\begin{proof}
Set $n := \dim(X)$. Clearly, $f^\ast$ induces automorphisms on $\N_{n-1}(X)$ and $\N^1(X)$.

For (1), applying Proposition \ref{cone} to $f^\ast|_{\N^1(X)}$, $\PEC(X)$ and its interior point $H$, Proposition \ref{cone}(2) holds for $f^\ast|_{\N^1(X)}$.

For (2), applying Proposition \ref{cone} to $f^\ast|_{\N_{n-1}(X)}$, $\PE(X)$ and its interior point $H$, Proposition \ref{cone}(2) holds for $f^\ast|_{\N_{n-1}(X)}$ and hence for $f^\ast|_{\N^1(X)}$, since $\N^1(X)$ is a subspace of $\N_{n-1}(X)$ by Lemma \ref{nrn1}.

Now for both (1) and (2), applying Proposition \ref{cone} to $f^\ast|_{\N^1(X)}$ and $\Nef(X)$, $f^\ast|_{\N^1(X)}$ has an eigenvector in $\Nef(X)^\circ$.
So $f^\ast A\equiv qA$ for some ample $\mathbb{R}$-Cartier divisor $A$.
By Lemma \ref{R-int}, $q$ is an integer and we may assume $A$ is Cartier.

If $q>1$, then there exists an ample Cartier divisor $A'\equiv A$, such that $f^\ast A'\sim qA'$ by Lemma \ref{NZ-linear}.
\end{proof}

Let $X$ be a projective variety. Denote by $\Aut(X)$ the full automorphism group of $X$ and $\Aut_0(X)$ the neutral connected component of $\Aut(X)$.
\begin{theorem}\label{aut} Let $X$ be a normal projective variety. Let $G$ be a subgroup of $\Aut(X)$, such that for any $g\in G$, $g^\ast B_g\equiv_w B_g$ for some big Weil $\mathbb{R}$-divisor $B_g$.  Then $[G:G\cap \Aut_0(X)]<\infty$.
\end{theorem}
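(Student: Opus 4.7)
The plan is to reduce the hypothesis on each $g\in G$ from weak numerical preservation of a big Weil $\R$-divisor to numerical preservation of a big (in fact ample) Cartier divisor, and then invoke \cite[Theorem 2.1]{DHZ} verbatim. In other words, Proposition \ref{big-polar-THM}(2) is precisely the tool bridging our hypothesis to the setting in which the DHZ theorem was originally formulated.

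First, I would fix an arbitrary $g\in G$ and note that, since $g$ is an automorphism, $\deg g = 1$; therefore the hypothesis $g^{\ast}B_g\equiv_w B_g$ is exactly of the form $g^{\ast}B_g\equiv_w qB_g$ with $q=1>0$ a rational number, and $B_g$ a big Weil $\R$-divisor. Applying Proposition \ref{big-polar-THM}(2) (i.e.~Proposition \ref{big-polar}) with this $q=1$, we obtain an ample Cartier divisor $A_g$ satisfying $g^{\ast}A_g \equiv A_g$. In particular $A_g$ is big and numerically $g^{\ast}$-fixed.

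Thus every element $g\in G$ admits a big Cartier divisor $A_g$ with $g^{\ast}A_g\equiv A_g$, which is exactly the hypothesis of \cite[Theorem 2.1]{DHZ}. Invoking that result, we conclude $[G : G\cap \Aut_0(X)]<\infty$.

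The only nontrivial point is verifying that Proposition \ref{big-polar-THM}(2) is genuinely applicable in the stated form, namely with the weak numerical equivalence of a big \emph{Weil} $\R$-divisor (not only of a big $\R$-Cartier divisor); but this is precisely case (2) of that proposition, established earlier via the cone-analysis Proposition \ref{cone} and the injection $\N^1(X)\hookrightarrow \N_{n-1}(X)$ of Lemma \ref{nrn1}. No further obstacle arises, since once an ample Cartier $A_g$ is produced for each $g$, the conclusion is immediate from \cite[Theorem 2.1]{DHZ}.
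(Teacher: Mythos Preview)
Your strategy is the same as the paper's: upgrade the hypothesis via Proposition~\ref{big-polar}(2) so that each $g\in G$ fixes an ample class, then invoke \cite[Theorem 2.1]{DHZ}. There is one technical step you have omitted, however. The result \cite[Theorem 2.1]{DHZ} is formulated for compact K\"ahler \emph{manifolds}, not for singular normal varieties. The paper therefore first passes to an $\Aut(X)$-equivariant projective resolution $\pi:X'\to X$, regards $G$ as a subgroup of $\Aut(X')$, and uses \cite[Proposition 2.1]{Br11} to identify $\Aut_0(X')$ with $\Aut_0(X)$. One then pulls back the ample $A_g$ on $X$ to the big class $\pi^\ast A_g$ on $X'$, which is $g^\ast$-fixed, and applies DHZ on the smooth model $X'$ to conclude $[G:G\cap\Aut_0(X)] = [G:G\cap\Aut_0(X')] < \infty$.

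So your argument is correct in spirit and matches the paper's route, but as written it cites \cite[Theorem 2.1]{DHZ} on a possibly singular $X$ without justification. Either verify that DHZ's argument goes through unchanged for normal projective varieties, or insert the equivariant-resolution step as the paper does.
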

\begin{proof} Take an $\Aut(X)$-equivariant projective resolution $\pi:X'\to X$.
We may regard $G$ and $\Aut_0(X)$ as subgroups of $\Aut(X')$.
By \cite[Proposition 2.1]{Br11}, we can identify $\Aut_0(X')$ with $\Aut_0(X)$.  For any $g\in G$, $g$ fixes some ample class $A_X$ in $\Amp(X)$ by Proposition \ref{big-polar}. Since $\pi$ is birational, $A_{X'}=\pi^\ast A_X$ is big and $g$ fixes $A_{X'}$. By \cite[Theorem 2.1]{DHZ}, $[G:G\cap \Aut_0(X)]=[G:G\cap \Aut_0(X')]<\infty$.
\end{proof}

\begin{lemma}\label{lem-lift-graph} Let $\pi:X\dashrightarrow Y$ be a dominant rational map between two projective varieties and let $f:X\to X$ and $g:Y\to Y$
be two surjective endomorphisms such that $g\circ\pi=\pi\circ f$.
Let $W$ be the graph of $\pi$ and denote by
$p_1:W\to X$ and $p_2:W\to Y$ the two natural projections.
Then there is a surjective endomorphism $h:W\to W$ such that $p_1\circ h=f\circ p_1$ and $p_2\circ h=g\circ p_2$.
Furthermore, if $f$ is polarized, then $h$ is polarized.
\end{lemma}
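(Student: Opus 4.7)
The natural candidate is $h := (f \times g)|_W$, so the first task is to verify that the product endomorphism $f \times g$ of $X \times Y$ preserves $W$. Let $U \subseteq X$ be the open dense subset on which $\pi$ is a morphism, and set $V := U \cap f^{-1}(U)$, which is open dense in $X$ since $f$ is surjective. For $v \in V$ we have $f(v) \in U$ and $g(\pi(v)) = \pi(f(v))$ by hypothesis, so
\[
(f \times g)(v, \pi(v)) \;=\; (f(v),\, g(\pi(v))) \;=\; (f(v),\, \pi(f(v))) \;\in\; W.
\]
Since $\{(v,\pi(v)) : v \in V\}$ is dense in $W$, taking closures in $X \times Y$ gives $(f \times g)(W) \subseteq W$. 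Define $h := (f \times g)|_W : W \to W$; the commutation relations $p_1 \circ h = f \circ p_1$ and $p_2 \circ h = g \circ p_2$ are immediate from the projection formulas.

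For surjectivity of $h$, observe that $f$ and $g$ are finite surjective (a surjective endomorphism of a projective variety is finite, as noted in the introduction), hence $f \times g$ is a finite surjective endomorphism of $X \times Y$, and its restriction $h$ to the closed subvariety $W$ is finite. Therefore $h(W)$ is a closed irreducible subset of $W$ of the same dimension as $W$, which forces $h(W) = W$.

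For the polarization statement, suppose $f$ is polarized, so $f^{\ast} H \sim qH$ for some ample Cartier divisor $H$ on $X$ and an integer $q > 1$. Set $H_W := p_1^{\ast} H$. Because $p_1 : W \to X$ is a birational morphism between projective varieties, $H_W$ is nef (pullback of a nef divisor by a morphism) and big (since $(p_1^{\ast} H)^{\dim W} = H^{\dim X} > 0$). Using $p_1 \circ h = f \circ p_1$ we compute
\[
h^{\ast} H_W \;=\; h^{\ast} p_1^{\ast} H \;=\; p_1^{\ast} f^{\ast} H \;\sim\; q \, p_1^{\ast} H \;=\; q H_W,
\]
so $h$ is quasi-polarized. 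Applying Proposition \ref{big-polar-THM} to $h$ with the big $\R$-Cartier divisor $H_W$ (condition (1)) upgrades quasi-polarized to polarized, completing the proof.

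The only mild subtlety is checking that $f \times g$ preserves $W$, which is purely formal once one restricts to the open locus $V$ where everything is defined; the rest is an application of Proposition \ref{big-polar-THM}, which is the key technical input that removes the need to have an ample pullback of $H$ on $W$.
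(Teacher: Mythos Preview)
Your proof is correct and follows essentially the same route as the paper's: define $h=(f\times g)|_W$, check it preserves $W$ on a dense open set and invoke finiteness of $f\times g$ for surjectivity, then pull back the polarizing divisor along the birational $p_1$ to get a nef and big eigenclass and upgrade via Proposition~\ref{big-polar-THM}. The only cosmetic differences are your choice of $V=U\cap f^{-1}(U)$ versus the paper's requirement that $\pi$ be defined on $U$ and $f(U)$, and your explicit dimension argument for surjectivity; both are equivalent in substance.
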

\begin{proof} Note that $f$ and $g$ induce a natural surjective endomorphism $f\times g:X\times Y\to X\times Y$ via $(x,y)\mapsto (f(x),g(y))$.
Set $h=(f\times g)|_W$.
We check that $h(W)=W$.
Let $U$ be an open dense subset of $X$ such that $\pi|_{U}$ and $\pi|_{f(U)}$ are both well-defined morphisms.
Then $p_1$ is isomorphic over $U$ and $f(U)$.
For any $(x,y)\in p_1^{-1}(U)$, $y=\pi(x)$ and $h(x,y)=(f(x),g(y))=(f(x),g(\pi(x)))=(f(x),\pi(f(x)))\in p_1^{-1}(f(U))\subseteq W$.
Then $h(p_1^{-1}(U))\subseteq W$.
Note that $p_1^{-1}(U)$ is open dense in $W$, $W$ is a projective subvariety of $X\times Y$, and $f\times g$ is finite surjective.
So $h(W)=W$.
Since $p_1\circ h (x,y)=f(x)=f\circ p_1(x,y)$ for any $(x,y)\in p_1^{-1}(U)$, we have $p_1\circ h=f\circ p_1$.
Similarly, $p_2\circ h=g\circ p_2$.

Suppose $f^*H\sim qH$ for some ample divisor $H$ of $X$ and integer $q>1$.
Let $H'=p_1^*H$.
Then $h^*H'\sim qH'$.
Since $p_1$ is birational, $H'$ is nef and big.
So $h$ is quasi-polarized and hence polarized by Proposition \ref{big-polar}.
\end{proof}

\begin{lemma}\label{lem-lift-normal} Let $f:X\to X$
be a surjective endomorphism of a projective variety $X$.
Let $\pi:X_1\to X$ be the normalization of $X$.
Then there is a surjective endomorphism $f_1:X_1\to X_1$ such that $\pi\circ f_1=f\circ \pi$.
Furthermore, if $f$ is polarized, then $f_1$ is polarized.
\end{lemma}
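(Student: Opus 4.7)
The plan is to use the universal property of normalization to construct $f_1$, then transfer the polarization along $\pi$.

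First I would construct $f_1$ as follows. Since $\pi : X_1 \to X$ is the normalization, it is a finite surjective birational morphism, and $X_1$ is a normal projective variety. The composite $f \circ \pi : X_1 \to X$ is a morphism from a normal variety to $X$, and it is dominant because both $f$ and $\pi$ are surjective. By the universal property of normalization, there is a unique morphism $f_1 : X_1 \to X_1$ with $\pi \circ f_1 = f \circ \pi$.

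Next I would verify surjectivity of $f_1$ by a dimension count. Since $X_1$ is projective and irreducible and $f_1$ is proper, $f_1(X_1)$ is an irreducible closed subvariety of $X_1$. From $\pi \circ f_1 = f \circ \pi$ and the surjectivity of the right side, we get $\pi(f_1(X_1)) = X$. Because $\pi$ is finite, $\dim f_1(X_1) = \dim \pi(f_1(X_1)) = \dim X = \dim X_1$, which forces $f_1(X_1) = X_1$.

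Finally, suppose $f$ is polarized, so $f^{\ast} H \sim qH$ for some ample Cartier divisor $H$ on $X$ and integer $q > 1$. Set $H_1 := \pi^{\ast} H$. Then $H_1$ is Cartier, and is ample because the pullback of an ample divisor by a finite morphism is ample. Using $\pi \circ f_1 = f \circ \pi$, we compute
\[
f_1^{\ast} H_1 \;=\; f_1^{\ast} \pi^{\ast} H \;=\; (\pi \circ f_1)^{\ast} H \;=\; (f \circ \pi)^{\ast} H \;=\; \pi^{\ast}(f^{\ast} H) \;\sim\; \pi^{\ast}(qH) \;=\; q H_1,
\]
so $f_1$ is polarized by $H_1$.

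No step is really an obstacle; the only thing to be slightly careful about is applying the universal property of normalization, which requires $f \circ \pi$ to be dominant, and checking that $H_1 = \pi^{\ast} H$ stays ample Cartier (both standard facts for finite morphisms between projective varieties).
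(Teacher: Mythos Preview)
Your proof is correct and follows essentially the same approach as the paper: construct $f_1$ via the universal property of normalization, then pull back the polarization along the finite morphism $\pi$ to get $H_1=\pi^\ast H$ ample with $f_1^\ast H_1\sim qH_1$. The paper's version is terser (it omits the explicit surjectivity check), but the argument is the same.
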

\begin{proof} By the universal property of normalization, such $f_1$ exists.
Suppose $f^*H\sim qH$ for some ample divisor $H$ of $X$ and integer $q>1$.
Let $H_1=\pi^*H$.
Then $H_1$ is ample and $f_1^*H_1\sim qH_1$.
\end{proof}


\begin{lemma}\label{basic-deg2} Let $\pi:X\dashrightarrow Y$ be a dominant rational map between two projective varieties and let $f:X\to X$ and $g:Y\to Y$
be two polarized endomorphisms such that $g\circ\pi=\pi\circ f$.
Then the eigenvalues of $f^\ast|_{\N^1(X)}$ are of modulus $q$ if and only if so are the eigenvalues of $g^\ast|_{\N^1(Y)}$ (if $Y$ is a point, we assume this is always true).
In particular, $(\deg f)^{\dim(Y)}=(\deg g)^{\dim(X)}$.
\end{lemma}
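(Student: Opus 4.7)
The plan is to pull both $f$ and $g$ back to a common polarized endomorphism on the normalization of the graph of $\pi$, and then extract spectral information via Proposition \ref{cone}. Combining Lemma \ref{lem-lift-graph} with Lemma \ref{lem-lift-normal}, I would produce a polarized endomorphism $h : W \to W$ on the normalization $W$ of the graph of $\pi$, together with surjective morphisms $p_1 : W \to X$ and $p_2 : W \to Y$ satisfying $f \circ p_1 = p_1 \circ h$ and $g \circ p_2 = p_2 \circ h$. Fix an ample Cartier divisor $H_W$ on $W$ and an integer $q_h > 1$ with $h^\ast H_W \sim q_h H_W$. Since $h^\ast$ and its inverse preserve the closed convex cone $\Nef(W)$ (which spans $\N^1(W)$ and contains no line), and $[H_W]$ is an interior eigenvector with eigenvalue $q_h$, condition (1) of Proposition \ref{cone} holds, so $h^\ast|_{\N^1(W)}$ is diagonalizable and every one of its (complex) eigenvalues has modulus $q_h$.

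Next, I would transport this information to $X$ and $Y$. The pullbacks $p_1^\ast : \N^1(X) \to \N^1(W)$ and $p_2^\ast : \N^1(Y) \to \N^1(W)$ are injective, a standard fact for surjective morphisms of projective varieties: any nonzero class on the base pairs nontrivially with some curve, which is the image of a curve on $W$. The intertwining relations $p_1^\ast \circ f^\ast = h^\ast \circ p_1^\ast$ and $p_2^\ast \circ g^\ast = h^\ast \circ p_2^\ast$ realize $f^\ast|_{\N^1(X)}$ and $g^\ast|_{\N^1(Y)}$ as conjugates of $h^\ast$ restricted to $h^\ast$-invariant subspaces of $\N^1(W)$. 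Since the restriction of a diagonalizable operator to an invariant subspace is diagonalizable with a subset of the ambient spectrum, every eigenvalue of $f^\ast|_{\N^1(X)}$ and every eigenvalue of $g^\ast|_{\N^1(Y)}$ has modulus $q_h$. The stated equivalence is then automatic: both sides of the ``if and only if'' hold exactly when $q = q_h$, with the $Y$-a-point case handled by the parenthetical convention.

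The ``in particular'' clause follows from Lemma \ref{basic-deg} applied to $f$ and $g$ separately: each polarization scalar must equal $q_h$ by the argument above, giving $\deg f = q_h^{\dim(X)}$ and $\deg g = q_h^{\dim(Y)}$, whence $(\deg f)^{\dim(Y)} = q_h^{\dim(X)\dim(Y)} = (\deg g)^{\dim(X)}$. The substance of the argument is concentrated in Proposition \ref{cone}, which forces $h^\ast|_{\N^1(W)}$ to be diagonalizable with all eigenvalues of a single modulus; everything else is assembly, with the only nontrivial bookkeeping being the injectivity of the $p_i^\ast$ on Neron-Severi groups.
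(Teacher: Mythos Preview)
Your proof is correct and follows essentially the same route as the paper: pass to (the normalization of) the graph, invoke Proposition \ref{cone} to force all eigenvalues of the pullback on $\N^1$ to share a single modulus, and then transfer this via the injectivity of $p_i^\ast$ on N\'eron--Severi groups. The only organizational difference is that the paper simply replaces $X$ by the graph (so $\pi$ becomes a morphism) and applies Proposition \ref{cone} separately to $\Nef(X)$ and $\Nef(Y)$, whereas you keep $X$, $Y$, and $W$ all in play and apply Proposition \ref{cone} once at the top; your version is slightly more explicit, and the normalization step via Lemma \ref{lem-lift-normal} is harmless but not strictly needed.
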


\begin{proof} Replacing $X$ by the graph of $\pi$ and by Lemma \ref{lem-lift-graph}, we may assume $\pi$ is a surjective morphism.
Set $m=\dim (X)$ and $n=\dim (Y)$.
Suppose that $f^\ast H_X=pH_X$ and $g^\ast H_Y\sim qH_Y$ for some ample divisors $H_X\in \N^1(X)$, $H_Y\in \N^1(Y)$ and $p, q>1$.
Since $\pi$ is surjective, $\pi^\ast: \N^1(Y)\to \N^1(X)$ is an injection.
If $Y$ is not a point, then $\N^1(Y)$ is of positive dimension.
Applying Proposition \ref{cone} to the cones $\Nef(X)$ and $\Nef(Y)$, we have $p=q$.
The last assertion then follows from Lemma \ref{basic-deg}.
\end{proof}

\begin{theorem}\label{desends-polar} Let $\pi:X\dashrightarrow Y$ be a dominant rational map between two projective varieties and let $f:X\to X$ and $g:Y\to Y$ be two surjective endomorphisms such that $g\circ\pi=\pi\circ f$. Suppose $f$ is polarized. Then $g$ is polarized; and  $(\deg f)^{\dim(Y)}=(\deg g)^{\dim(X)}$.
\end{theorem}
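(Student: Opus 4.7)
My plan is to reduce to the case where $\pi$ is a surjective morphism, use the injection $\pi^*:\N^1(Y)\hookrightarrow \N^1(X)$ to transfer the cone boundedness enjoyed by $f^*$ to $g^*$, and then invoke Proposition~\ref{cone} on $Y$ to obtain an ample $g^*$-eigenvector of eigenvalue $q$, which finally upgrades to a polarization via Lemmas~\ref{R-int} and~\ref{NZ-linear}.

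\emph{Reduction and setup.} By Lemma~\ref{lem-lift-graph}, replacing $(X,f,\pi)$ with the graph of $\pi$, its induced polarized endomorphism, and the second projection, I may assume that $\pi:X\to Y$ is a surjective morphism and $f^*H\sim qH$ for an ample Cartier divisor $H$ and an integer $q>1$. The case $\dim Y=0$ is trivial by convention, so assume $\dim Y>0$. For every curve $C\subset Y$, a general slice of $\pi^{-1}(C)$ gives a curve $C'\subset X$ with $\pi_* C'=m[C]$, $m>0$; the projection formula $\pi^*L\cdot C'=m(L\cdot C)$ then forces $\pi^*:\N^1(Y)\to \N^1(X)$ to be injective. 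It also intertwines $g^*$ with $f^*$, i.e.\ $\pi^*\circ g^* = f^*\circ \pi^*$. Since $f^*$ is an automorphism of $\N^1(X)$ preserving the subspace $\pi^*\N^1(Y)$, the operator $g^*$ is automatically an automorphism of $\N^1(Y)$; a parallel curve argument also shows $\Nef(Y)$ is $(g^*)^{\pm1}$-stable. Inverting gives $\pi^*\circ (g^*)^i=(f^*)^i\circ \pi^*$ for every $i\in\Z$.

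\emph{Cone transfer and polarization.} Proposition~\ref{cone} applied to $f^*$, $q$, and $\Nef(X)$ (with interior eigenvector $H$) yields $N>0$ such that $\|(f^*)^i\|_X<Nq^i$ for every $i\in\Z$. Equipping $\N^1(Y)$ with the norm $\|v\|_Y:=\|\pi^*v\|_X$, the intertwining gives
\[
\|(g^*)^iv\|_Y=\|(f^*)^i\pi^*v\|_X\le \|(f^*)^i\|_X\cdot\|v\|_Y,
\]
so $\|(g^*)^i\|_Y<Nq^i$ for all $i\in\Z$. Applying Proposition~\ref{cone} in the direction $(2)\Rightarrow(1)$ to $g^*$, $q$, and $\Nef(Y)$ (which spans $\N^1(Y)$, contains no line, and is $(g^*)^{\pm1}$-stable) produces a class $A\in\Nef(Y)^\circ=\Amp(Y)$ with $g^*A\equiv qA$. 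Since $q$ is an integer greater than one, Lemma~\ref{R-int} upgrades $A$ to an ample Cartier divisor satisfying the same numerical eigen-relation, and Lemma~\ref{NZ-linear} produces an ample Cartier $A'\equiv A$ with $g^*A'\sim qA'$; hence $g$ is polarized. The degree identity $(\deg f)^{\dim Y}=(\deg g)^{\dim X}$ then follows immediately from Lemma~\ref{basic-deg2}.

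The main subtlety is the \emph{bilateral} boundedness hypothesis (2) in Proposition~\ref{cone}: one needs $\|(g^*)^i\|/q^i$ bounded for negative $i$ as well, and this is precisely why it matters that $\pi^*$ is injective and that $f^*$ is an honest automorphism of $\N^1(X)$. Once that inverse-power bound is in hand, Proposition~\ref{cone} furnishes the interior eigenvector of the nef cone and the rest is routine upgrading of numerical to linear equivalence.
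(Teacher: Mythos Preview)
Your proof is correct and follows essentially the same route as the paper: reduce to a surjective morphism via Lemma~\ref{lem-lift-graph}, use the injection $\pi^*:\N^1(Y)\hookrightarrow\N^1(X)$ to transfer the two-sided norm bound of Proposition~\ref{cone}(2) from $f^*$ to $g^*$, and then apply Proposition~\ref{cone}(1) on $\Nef(Y)$ together with Lemma~\ref{NZ-linear} to conclude. Your version is more explicit (the curve argument for injectivity of $\pi^*$, the norm transfer, and the intermediate use of Lemma~\ref{R-int}), but the strategy is identical to the paper's.
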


\begin{proof} Replacing $X$ by the graph of $\pi$ and by Lemma \ref{lem-lift-graph}, we may assume $\pi$ is a surjective morphism.
We may also assume $\dim(Y)>0$.
Since $f$ is polarized, $f^\ast|_{\N^1(X)}$ satisfies Proposition \ref{cone}(2). Since $\pi$ is surjective, $\N^1(Y)$ can be viewed as a subspace of $\N^1(X)$ and hence $g^\ast|_{\N^1(Y)}$ also satisfies Proposition \ref{cone}(2);
so the eigenvalues of $g^\ast|_{\N^1(Y)}$ are of modulus greater than $1$.
Therefore, Proposition \ref{cone}(1) and Lemma \ref{NZ-linear} imply that $g$ is polarized. The last formula follows from Lemma \ref{basic-deg2}.
\end{proof}

\begin{corollary}\label{finite-polar} Let $\pi:X\dashrightarrow Y$ be a generically finite dominant rational map between two projective varieties and let $f:X\to X$ and $g:Y\to Y$ be two surjective endomorphisms such that $g\circ\pi=\pi\circ f$.
Then $f$ is polarized if and only if so is $g$.
\end{corollary}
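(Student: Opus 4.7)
The ``only if'' direction is exactly Theorem~\ref{desends-polar} (regardless of whether $\pi$ is generically finite). So the work is in the ``if'' direction: assuming $g$ is polarized, one must produce an ample Cartier divisor on $X$ polarizing $f$.

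The natural plan is to resolve the rational map by passing to its graph and then exploit the two ready-made tools at our disposal: Lemma~\ref{lem-lift-graph} (which lifts the equivariance to an honest morphism), and Proposition~\ref{big-polar} (which upgrades ``quasi-polarized'' to ``polarized''). Concretely, I would let $W$ be the graph of $\pi$ with projections $p_1:W\to X$ (birational) and $p_2:W\to Y$ (surjective, dominant). Since $\pi$ is generically finite, so is $p_2$. By Lemma~\ref{lem-lift-graph} there is a surjective endomorphism $h:W\to W$ with $p_1\circ h=f\circ p_1$ and $p_2\circ h=g\circ p_2$.

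Now write $g^{\ast}H\sim qH$ for some ample Cartier divisor $H$ on $Y$ and integer $q>1$. The divisor $H':=p_2^{\ast}H$ is nef (pullback of ample) and big (because $p_2$ is surjective and generically finite), and it satisfies
\[
h^{\ast}H' \;=\; (p_2\circ h)^{\ast}H \;=\; (g\circ p_2)^{\ast}H \;\sim\; q\,p_2^{\ast}H \;=\; q H'.
\]
Thus $h$ is quasi-polarized, hence polarized by Proposition~\ref{big-polar}. Finally, $p_1:W\to X$ is a dominant (birational) morphism intertwining $h$ and $f$, so Theorem~\ref{desends-polar} applies and yields that $f$ is polarized. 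The degree equality $(\deg f)^{\dim Y}=(\deg g)^{\dim X}$ reduces, via generic finiteness of $\pi$, to $\deg f=\deg g$, which follows from either Lemma~\ref{basic-deg2} applied to the pair $(f,g)$ or directly from the multiplicativity of degrees along $\pi$.

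There is no serious obstacle here: the only place where the generic finiteness hypothesis is actually used is in asserting that $p_2^{\ast}H$ is big on $W$, and this is immediate. Everything else is an application of results already established in the preceding sections.
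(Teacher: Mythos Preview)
Your proof is correct and follows essentially the same route as the paper: pass to the graph via Lemma~\ref{lem-lift-graph}, pull back the ample divisor from $Y$ along the (now generically finite) projection to obtain a nef and big class, invoke Proposition~\ref{big-polar} to upgrade quasi-polarized to polarized, and descend along the birational projection. The paper phrases the reduction as ``replacing $X$ by the graph'' and then argues directly on the new $X$, whereas you keep $W$ and $X$ separate and descend at the end via Theorem~\ref{desends-polar}; the content is the same.
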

\begin{proof} Replacing $X$ by the graph of $\pi$ and by Lemma \ref{lem-lift-graph}, we may assume $\pi$ is a generically finite surjective morphism. If $f$ is polarized, then $g$ is polarized by Theorem \ref{desends-polar}.
Suppose $g^*H_Y\sim qH_Y$ for some ample divisor $H_Y$ and $q>1$.
Let $H_X=\pi^*H_Y$.
Since $\pi$ is generically finite surjective, $H_X$ is nef and big.
Note that $f^*H_X\sim qH_X$.
So $f$ is quasi-polarized and hence polarized by Proposition \ref{big-polar}.
\end{proof}

%

%
%
%
%
\section{Special MRC fibration and the non-uniruled case}

We refer to \cite[Section 4]{Na10} for the following result.
\begin{lemma}\label{smrc}(cf.~\cite[Theorem 4.19]{Na10}) Let $f:X\to X$ be a surjective endomorphism of a normal
projective variety.
Let $\pi: X\dashrightarrow Y$ be the special MRC fibration in the sense of \cite[before Theorem 4.18]{Na10}
with $Y$ non-uniruled (cf.~\cite{GHS}).
Then there is a surjective endomorphism $h: Y \to Y$ such that $\pi\circ f=h\circ \pi$.
\end{lemma}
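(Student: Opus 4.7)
The plan is to exploit the birational uniqueness of the MRC fibration to produce a rational self-map on $Y$, and then to use the specific features of Nakayama's \emph{special} MRC model to upgrade that rational self-map to an honest endomorphism.

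First I would fix any MRC fibration $\pi_0 \colon X \dashrightarrow Y_0$ with $Y_0$ non-uniruled (such a $Y_0$ exists by \cite{GHS}). Since $f$ is a finite surjective endomorphism, the composite $\pi_0 \circ f \colon X \dashrightarrow Y_0$ is again an MRC fibration with non-uniruled target: rational connectedness of the general fibre of $\pi_0$ is preserved under the finite base change induced by $f$, and $Y_0$ is unchanged. By the birational uniqueness of the MRC fibration (cf.~\cite{Ko}), there is a birational self-map $g_0 \colon Y_0 \dashrightarrow Y_0$ such that $g_0 \circ \pi_0 = \pi_0 \circ f$ as rational maps. Replacing $Y_0$ by Nakayama's special birational representative $Y$ in the sense of \cite[before Theorem 4.18]{Na10}, and adjusting $\pi_0$ to $\pi$ accordingly, one obtains a dominant rational self-map $g \colon Y \dashrightarrow Y$ with $g \circ \pi = \pi \circ f$.

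Second, I would argue that $g$ extends to a morphism $h \colon Y \to Y$. This is precisely the content of Nakayama's notion of \emph{special} MRC: within its birational class, $Y$ is singled out so that dominant rational self-maps induced by surjective endomorphisms of $X$ extend to regular morphisms on $Y$. The non-uniruledness of $Y$ forces the indeterminacy locus of $g$ to have codimension $\ge 2$, and Nakayama's functorial choice of $Y$ (built via Iitaka-type canonical fibrations and the properness of the families parameterizing rationally connected fibres) rules out the residual codimension-two indeterminacies. Once $h$ is a morphism, surjectivity is automatic from the surjectivity of $f$ and dominance of $\pi$: the image $h(Y)$ is closed and contains the dense set $h(\pi(X)) = \pi(f(X)) = \pi(X)$, hence equals $Y$.

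The main obstacle is the second step. Producing a commutative diagram of rational maps is essentially formal from the uniqueness of MRC and Lemma \ref{lem-lift-graph}-style considerations; the real content of the lemma is the regularity of $h$ on the nose, and this genuinely requires Nakayama's canonical target rather than an arbitrary birational representative, since non-uniruledness alone permits codimension-two indeterminacies for rational self-maps. Consequently, the substantive part of the proof is a direct appeal to \cite[Theorem 4.19]{Na10}, and the remaining bookkeeping (compatibility, surjectivity) is routine.
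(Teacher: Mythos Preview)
The paper gives no proof of this lemma: it is introduced with ``We refer to \cite[Section 4]{Na10} for the following result'' and treated as a direct citation of Nakayama's \cite[Theorem 4.19]{Na10}. Your sketch is a reasonable unpacking of what lies behind that citation, and you correctly identify that the substantive content---regularity of the induced self-map $h$ on the \emph{special} target $Y$---is precisely what Nakayama's construction supplies and cannot be obtained from an arbitrary birational model of the MRC base.

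Two small corrections to your first step. First, the induced map $g_0$ need not be birational: take $X = \mathbb{P}^1 \times E$ with $E$ an elliptic curve and $f = (\id, [2])$; the MRC base is $E$ and $g_0$ is multiplication by $2$, of degree $4$. It is only dominant (and finite once it is a morphism). Second, your reason for $\pi_0 \circ f$ being an MRC fibration---``rational connectedness of the general fibre of $\pi_0$ is preserved under the finite base change induced by $f$''---is not quite right as stated: the general fibre of $\pi_0 \circ f$ over $y$ is $f^{-1}(\pi_0^{-1}(y))$, a finite cover of a rationally connected variety, and finite covers of rationally connected varieties are not automatically rationally connected. The clean argument is the other direction: since $f$ is a morphism, the image $f(F)$ of a general rationally connected fibre $F$ of $\pi_0$ is again rationally connected, hence lands in a single fibre of $\pi_0$; thus $\pi_0 \circ f$ contracts the general fibres of $\pi_0$ and factors through $\pi_0$, yielding the dominant rational self-map $g_0$. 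Neither issue affects your overall outline, since you ultimately (and correctly) defer the hard step to \cite[Theorem 4.19]{Na10}.
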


\begin{lemma}\label{albmrc} Let $\pi: X\dashrightarrow Y$ be a dominant rational map between two normal projective varieties.
Then we have the following commutative diagram:
$$\xymatrix{
X\ar@{.>}[r]^\pi\ar@{.>}[d]^{\alb_X}&Y\ar@{.>}[d]^{\alb_Y}\\
\Alb(X)\ar[r]^p&\Alb(Y)
}$$
where $\alb_X$ and $\alb_Y$ are the Albanses maps of $X$ and $Y$ respectively.
Suppose $\pi$  has the general fibre rationally connected (cf.~Definition \ref{def-gen-rc}).
Then $p$ is an isomorphism.
\end{lemma}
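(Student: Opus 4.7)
The plan is to reduce to the case of a surjective morphism between smooth projective varieties and then construct an explicit inverse to $p$ out of the Albanese map of $X$.

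First, for the existence of the commutative diagram, I would pick smooth projective models $X'\to X$ and $Y'\to Y$, resolve the indeterminacy of $\pi$ to obtain a surjective morphism $\pi':X''\to Y'$ with $X''\to X'$ birational and $X''$ smooth projective. Since the Albanese variety and the Albanese map are birational invariants of smooth projective varieties, we may identify $\Alb(X'')=\Alb(X')=\Alb(X)$ and $\Alb(Y')=\Alb(Y)$. The functoriality of the Albanese of a morphism then provides $p:\Alb(X)\to\Alb(Y)$ making the diagram commute (up to the usual choice of base points, which we fix once and for all).

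Now assume the general fibre of $\pi$ is rationally connected. By Definition \ref{def-gen-rc} this property is preserved under the birational modification above, so the general fibre $F$ of $\pi'$ is smooth and rationally connected. A smooth projective rationally connected variety has trivial Albanese, hence $\alb_{X''}|_F$ is constant. I would then argue that, by the rigidity of morphisms to abelian varieties (a rational map from a smooth variety to an abelian variety extends uniquely to a morphism, and morphisms to abelian varieties that are constant on the fibres of a surjective morphism with connected general fibre descend), there exists a morphism $\psi:Y'\to\Alb(X'')$ with $\psi\circ\pi'=\alb_{X''}$. Applying the universal property of the Albanese of $Y'$ to $\psi$ produces a morphism $\alpha:\Alb(Y)\to\Alb(X)$ of abelian varieties with $\alpha\circ\alb_{Y'}=\psi$.

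Finally, to check that $\alpha$ is inverse to $p$, I would use the commutativity of the diagram combined with the characterizing property $\alb_{X''}=\alpha\circ\alb_{Y'}\circ\pi'$: composing with $p$ on the left and using $p\circ\alb_{X''}=\alb_{Y'}\circ\pi'$, surjectivity of $\pi'$ gives $p\circ\alpha=\id$ on the image of $\alb_{Y'}$, hence on all of $\Alb(Y)$ since this image generates $\Alb(Y)$ as a group. Symmetrically, $\alpha\circ p$ equals the identity on the image of $\alb_{X''}$ and therefore on $\Alb(X)$. The main technical obstacle is the descent step producing $\psi$: one must be sure that the contraction of the general fibre of $\pi'$ by $\alb_{X''}$ really gives a well-defined morphism (not just a rational map) from $Y'$ to the abelian variety $\Alb(X'')$, for which I would invoke standard rigidity (no non-constant map from a rational curve to an abelian variety, plus extension of rational maps from smooth varieties to abelian varieties).
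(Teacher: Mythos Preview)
Your proposal is correct and follows essentially the same route as the paper: reduce to a surjective morphism between smooth models, use that $\alb_{X''}$ is constant on the rationally connected general fibre to descend it through $\pi'$, then apply the universal property of $\alb_{Y'}$ to obtain the inverse $\alpha$ (the paper's $t$). The only cosmetic differences are that the paper cites \cite[Lemma 14]{Ka81} for the descent step where you invoke rigidity directly, and the paper concludes $p\circ t=\id$, $t\circ p=\id$ straight from the uniqueness clause of the Albanese universal property rather than from ``the image of $\alb$ generates''; both arguments are equivalent.
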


\begin{proof} The existence of $p$ follows from the universal property of the Albanese map.
For convenience, we may replace $X$ and $Y$ by suitable smooth models such that $\pi$, $\alb_X$ and $\alb_Y$ are well-defined morphisms.
Note that $\pi$ has the general fibre rationally connected and every map from a rationally connected variety to an abelian variety is trivial.
So there is a Zariski open dense subset $U$ of $Y$ such that $\alb_X(\pi^{-1}(y))$ is a point for any $y\in U$.
By \cite[Lemma 14]{Ka81}, there is a rational map $s:Y\dashrightarrow \Alb(X)$, such that $s\circ \pi=\alb_X$.
Since $\pi$ is surjective, $p\circ s=\alb_Y$.
On the other hand, by the universal property of the Albanese map, $s=t\circ \alb_Y$ for some morphism $t:\Alb(Y)\to \Alb(X)$.
Note that $(p\circ t)\circ\alb_Y=\alb_Y$ and $(t\circ p)\circ \alb_X=\alb_X$.
Then $p\circ t$ and $t\circ p$ are both the identity maps, by the universal property of albansese maps.
Hence, $p$ is an isomorphism.
\end{proof}



\begin{lemma}\label{struclem}  Let $f:X\to X$ be a polarized endomorphism of a non-uniruled normal projective variety $X$.
Then $X$ is $Q$-abelian with canonical singularities and $K_X\sim_\Q 0$.
Further, there is an abelian variety $A$, such that the following diagram is commutative:
$$\xymatrix{
A \ar[r]^{\tau} \ar[d]_{f_A}& X\ar[d]_{f}\\
A \ar[r]^{\tau} & X }$$
where $f_A:A\to A$ is a polarized endomorphism and $\tau$ is a finite surjective morphism which is \'etale in codimension one.
\end{lemma}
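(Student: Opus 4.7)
The plan is to combine the ramification divisor of $f$ with the pseudo-effectivity of $K_X$ to force $f$ to be \'etale in codimension one, then exploit the spectral analysis of $f^\ast$ furnished by Propositions \ref{big-polar} and \ref{cone} to deduce $K_X\equiv_w 0$, and finally invoke Nakayama's structure theorem from \cite{Na10} to upgrade this to the $Q$-abelian conclusion together with a lift of $f$ to an abelian variety.

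Set $n=\dim X$. Since $X$ is non-uniruled, a smooth model has pseudo-effective canonical class by BDPP, and pushing forward shows $K_X\in \PE(X)$. Write the ramification formula as $K_X=f^\ast K_X+R_f$ with $R_f\geq 0$ an effective Weil divisor. Intersecting with $H^{n-1}$ and computing $(f^\ast K_X)\cdot(f^\ast H)^{n-1}$ in two ways (on the one hand by the top-intersection projection formula, which gives $q^n K_X\cdot H^{n-1}$; on the other hand using $f^\ast H\equiv qH$, which gives $q^{n-1}(f^\ast K_X)\cdot H^{n-1}$), one obtains $(f^\ast K_X)\cdot H^{n-1}=q\,K_X\cdot H^{n-1}$. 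Substituting into the ramification formula gives $(1-q)K_X\cdot H^{n-1}=R_f\cdot H^{n-1}\geq 0$; since $q>1$ and $K_X$ is pseudo-effective, $K_X\cdot H^{n-1}=0$ and $R_f=0$. Hence $f$ is \'etale in codimension one and $f^\ast K_X=K_X$ as Weil divisors. Now Proposition \ref{big-polar} exhibits $H$ as an interior eigenvector of $f^\ast|_{\N_{n-1}(X)}$ with eigenvalue $q$, so by Proposition \ref{cone} the map $f^\ast|_{\N_{n-1}(X)}$ is diagonalizable with every eigenvalue of modulus $q>1$. As $K_X$ is fixed by $f^\ast$ (eigenvalue $1\neq q$), its class in $\N_{n-1}(X)$ must vanish, i.e.~$K_X\equiv_w 0$.

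Having reduced to the numerically trivial case, I appeal to Nakayama's structure theorem (cf.~\cite[\S 4]{Na10}) to conclude that $X$ has at worst canonical singularities, $K_X\sim_\Q 0$, and there is a finite surjective morphism $\tau:A\to X$ from an abelian variety $A$, \'etale in codimension one, through which a positive iterate of $f$ lifts to a surjective endomorphism $f_A$ of $A$ making the diagram commute. Polarizedness of $f_A$ is inherited via $\tau$: since $\tau$ is finite, $\tau^\ast H$ is ample, and $f_A^\ast\tau^\ast H=\tau^\ast f^\ast H\sim q\,\tau^\ast H$ (alternatively one may invoke Corollary \ref{finite-polar}). The hard part is the invocation of Nakayama's theorem: the passage from numerical triviality $K_X\equiv_w 0$ plus an \'etale-in-codimension-one polarized self-map to a genuine abelian cover relies on a Beauville--Bogomolov-type decomposition and on ruling out strict Calabi-Yau and holomorphic symplectic factors, for which the existence of a polarized endomorphism with $q>1$ is essential.
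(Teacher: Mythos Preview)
Your opening computation is correct and essentially reconstructs the first half of the argument in \cite[Theorem 3.2]{Na-Zh}: from non-uniruledness and BDPP you get $K_X\in\PE(X)$, the ramification formula plus the projection formula force $R_f=0$ and $K_X\cdot H^{n-1}=0$, and then Proposition~\ref{cone} (applied to $\PE(X)$ via the argument of Proposition~\ref{big-polar}) gives $K_X\equiv_w 0$. The paper does not write this out; it simply invokes \cite[Theorem 3.2]{Na-Zh} together with \cite[Theorem 1.21]{GKP} to conclude directly that $X$ is $Q$-abelian with canonical singularities, and then \cite[Proposition 3.5]{Na-Zh} for the equivariant cover. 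So your route and the paper's are the same in substance, with your first paragraph unpacking what the paper cites.

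Where your write-up needs correction is the final black-box step. The reference \cite[\S 4]{Na10} concerns intersection sheaves and the special MRC fibration; it does not contain the structure theorem you want. What is actually required is: (i) \cite[Theorem 3.2]{Na-Zh} to pass from $K_X\equiv_w 0$ plus the polarized endomorphism to canonical singularities and $K_X\sim_\Q 0$ (this step is not formal---one does not know a priori that $K_X$ is even $\Q$-Cartier, and the endomorphism is used essentially); (ii) \cite[Proposition 3.5]{Na-Zh} to produce a quasi-\'etale cover $A\times S\to X$ with $A$ abelian and $S$ weak Calabi--Yau, together with a lift of $f$ itself---not merely an iterate---to $f_A\times f_S$; and (iii) \cite[Theorem 1.21]{GKP} applied to the non-uniruled $S$ (which inherits a polarized endomorphism) to force $S$ to be $Q$-abelian and hence, by the weak Calabi--Yau condition $q^\circ(S)=0$, a point. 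You gesture at (ii) and (iii) in your last sentence, but the attribution to \cite{Na10} should be replaced, and the lemma asserts a lift of $f$, so ``a positive iterate'' is weaker than what is claimed and what \cite[Proposition 3.5]{Na-Zh} actually delivers.
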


\begin{proof} By \cite[Theorem 1.21]{GKP} and \cite[Theorem 3.2]{Na-Zh}, $X$ is $Q$-abelian with only canonical singularities.

By \cite[Proposition 3.5]{Na-Zh}, there exist an abelian variety $A$ and a weak Calabi-Yau variety $S$, such that the following diagram is commutative:
$$\xymatrix{
A\times S \ar[r]^{\tau} \ar[d]_{f_A\times f_S}& X\ar[d]_{f}\\
A\times S \ar[r]^{\tau} & X }$$
where $f_A:A\to A$, $f_S:S\to S$ are polarized endomorphisms, and $\tau$ is a finite surjective morphism which is \'etale in codimension one.
Since $S$ is non-uniruled, $S$ is $Q$-abelian by \cite[Theorem 1.21]{GKP} and hence $\dim(S)=q^\circ(S)=0$ in the notation of \cite{Na-Zh} as in their definition of weak Calabi-Yau.
\end{proof}

\begin{remark}\label{rmk-abelian-ample}
Assume $X$ is a $Q$-abelian variety or just assume
there is a finite surjective morphism $A\to X$ with $A$ an abelian variety.
Since $A$ is a homogeneous variety, any effective divisor on $A$ is nef.
The same holds on $X$ by the projection formula.
Hence if $f:X\to X$ is quasi-polarized, it is also
polarized without using Proposition \ref{big-polar}.
\end{remark}

We refer to the proof of \cite[Proposition 3.5]{Na-Zh} for the following lemma.

\begin{lemma}\label{index1} Let $X$ be a normal projective variety with klt singularities and $K_X\sim_{\mathbb{Q}}0$
and let $\sigma:\hat{X}\to X$ be the global index-one cover.
Then for any surjective endomorphism $f:X\to X$,
there is a surjective endomorphism $\hat{f}:\hat{X}\to\hat{X}$ such that $\sigma\circ\hat{f}=f\circ\sigma$.
\end{lemma}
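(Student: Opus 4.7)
The plan is to verify that $f$ is quasi-\'etale, to realize the pullback of the index-one cover along $f$ as another index-one cover of $X$, and then to apply the essential uniqueness of this construction.

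From $K_X\sim_{\mathbb{Q}}0$ and the ramification divisor formula $K_X=f^*K_X+R_f$ (with $R_f\ge 0$), I would obtain $R_f\sim_{\mathbb{Q}}0$; effectivity then forces $R_f=0$, so $f$ is \'etale in codimension one and $f^*K_X=K_X$ as Weil divisors (for the fixed canonical divisor used in constructing $\hat X$). Consequently the reflexive pullback satisfies $f^{[*]}\OO_X(iK_X)\simeq \OO_X(iK_X)$ for every $i$.

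Let $m$ be the smallest positive integer with $mK_X\sim 0$ and fix an isomorphism $\phi:\OO_X(mK_X)\to\OO_X$, so that $\hat X=\operatorname{Spec}_X \mathcal{A}$ with $\mathcal{A}=\bigoplus_{i=0}^{m-1}\OO_X(iK_X)$ and algebra structure determined by $\phi$. I would form the normalization $\hat X'$ of $\hat X\times_{\sigma,X,f}X$, with projections $q_1:\hat X'\to\hat X$ and $\sigma':\hat X'\to X$ satisfying $f\circ\sigma'=\sigma\circ q_1$. By the previous step, $\hat X'=\operatorname{Spec}_X f^{[*]}\mathcal{A}$, and the algebra structure on $f^{[*]}\mathcal{A}\simeq \mathcal{A}$ is induced by $f^{[*]}\phi$. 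Since $\Hom(\OO_X(mK_X),\OO_X)$ is one-dimensional, $f^{[*]}\phi=\lambda\phi$ for some $\lambda\in k^*$; choosing $\mu\in k^*$ with $\mu^m=\lambda$, the map $x_i\mapsto \mu^i x_i$ on each summand yields an isomorphism of graded $\OO_X$-algebras between $f^{[*]}\mathcal{A}$ and $\mathcal{A}$, hence an isomorphism $\theta:\hat X'\to\hat X$ over $X$.

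Defining $\hat f:=q_1\circ\theta^{-1}:\hat X\to\hat X$ then gives $\sigma\circ\hat f=\sigma\circ q_1\circ\theta^{-1}=f\circ\sigma'\circ\theta^{-1}=f\circ\sigma$, and surjectivity follows from that of $f$ and $q_1$. The main technical point is the comparison of the two algebra structures on $\mathcal{A}$ coming from $\phi$ versus $f^{[*]}\phi$: one needs that $k$ is algebraically closed (to extract $\mu^m=\lambda$) and that rescaling the algebra structure on a cyclic cover by a scalar produces an isomorphic cover. This is essentially the argument used in the proof of \cite[Proposition 3.5]{Na-Zh} to which the authors refer.
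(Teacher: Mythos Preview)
Your argument is correct and is precisely the approach the paper intends: the paper gives no independent proof but simply refers to the proof of \cite[Proposition 3.5]{Na-Zh}, and what you have written is exactly the relevant portion of that argument (quasi-\'etaleness from $R_f\sim_{\mathbb Q}0$, identification of the normalized base change with $\operatorname{Spec}_X f^{[*]}\mathcal A$, and the rescaling $\mu^m=\lambda$ to match the two cyclic algebra structures). There is nothing to add.
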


\begin{lemma}\label{struccor} Let $X$ be a normal projective variety with klt singularities and $K_X\sim_{\mathbb{Q}}0$ (this is the case when $X$ is $Q$-abelian) and let $f:X\to X$ be a polarized endomorphism.
Then there exist a finite surjective morphism
$\tau:A\to X$ \'etale in codimension one with $A$ an abelian variety and a polarized endomorphism $f_A:A\to A$, such that $\tau\circ f_A=f\circ\tau$.
In particular, $X$ is $Q$-abelian.
\end{lemma}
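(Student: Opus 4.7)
The plan is to reduce the klt, $K_X\sim_{\Q}0$ case to the non-uniruled case already handled by Lemma \ref{struclem} by passing to the global index-one cover.

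First, invoke Lemma \ref{index1} to take the global index-one cover $\sigma: \hat X \to X$; by construction this is a finite surjective morphism, \'etale in codimension one, with $\hat X$ a normal projective variety having klt singularities and $K_{\hat X}\sim 0$, and $f$ lifts to a surjective endomorphism $\hat f:\hat X\to\hat X$ with $\sigma\circ\hat f=f\circ\sigma$. Because $\sigma$ is generically finite (in fact finite) and $f$ is polarized, Corollary \ref{finite-polar} implies that $\hat f$ is polarized as well.

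Next, I observe that $\hat X$ is non-uniruled: since $K_{\hat X}\sim 0$, it is in particular pseudo-effective, and a uniruled variety has canonical divisor that is not pseudo-effective. Therefore Lemma \ref{struclem} applies to the polarized endomorphism $\hat f:\hat X\to\hat X$, yielding an abelian variety $A$, a finite surjective morphism $\tau_1:A\to\hat X$ \'etale in codimension one, and a polarized endomorphism $f_A:A\to A$ satisfying $\tau_1\circ f_A=\hat f\circ\tau_1$.

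Finally, set $\tau:=\sigma\circ\tau_1:A\to X$. This is finite surjective, and since the composition of two quasi-\'etale morphisms is again quasi-\'etale (the union of the two codimension-$\geq 2$ ramification loci is still of codimension $\geq 2$), $\tau$ is \'etale in codimension one. Commutativity is immediate:
\[
\tau\circ f_A \;=\; \sigma\circ\tau_1\circ f_A \;=\; \sigma\circ\hat f\circ\tau_1 \;=\; f\circ\sigma\circ\tau_1 \;=\; f\circ\tau.
\]
This exhibits $X$ as a $Q$-abelian variety together with the required equivariant lift of $f$ to an abelian variety. The only non-routine ingredients are the existence of the lift $\hat f$ (granted by Lemma \ref{index1}) and the fact that the index-one cover of a klt variety with $K_X\sim_{\Q}0$ is quasi-\'etale; the main conceptual step is recognizing that the hypothesis $K_X\sim_{\Q}0$ plus klt is exactly what is needed to put ourselves in the non-uniruled setting of Lemma \ref{struclem} after this one finite cover.
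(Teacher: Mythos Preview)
Your proof is correct and follows essentially the same route as the paper: pass to the global index-one cover $\sigma:\hat X\to X$, lift $f$ via Lemma \ref{index1} and Corollary \ref{finite-polar}, observe $\hat X$ is non-uniruled so that Lemma \ref{struclem} applies, and compose the two quasi-\'etale covers. The only cosmetic difference is that the paper deduces non-uniruledness from $\kappa(\hat X)=0$ (after noting $\hat X$ has canonical singularities since $K_{\hat X}$ is Cartier and $\hat X$ is klt), whereas you argue via pseudo-effectivity of $K_{\hat X}$; both are standard.
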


\begin{proof} 
Let $\sigma:\hat{X}\to X$ be the global index-one cover, i.e.~ the minimal quasi-\'etale cyclic covering satisfying $K_{\hat{X}}\sim 0$.
Then there is a polarized endomorphism $\hat{f}:\hat{X}\to\hat{X}$ satisfying $\sigma\circ\hat{f}=f\circ\sigma$ by Lemmas \ref{index1} and Corollary \ref{finite-polar}.
Since $K_{\hat{X}}$ is Cartier and $\hat{X}$ is klt, $\hat{X}$ has canonical singularities.
In particular, $\kappa(\hat{X})=0$ and hence $\hat{X}$ is non-uniruled.
Now by Lemma \ref{struclem}, there exist a finite surjective morphism  $\tau':A\to \hat{X}$ which is \'etale in codimension one with $A$ an abelian variety and a polarized endomorphism $f_A:A\to A$,
such that $\tau'\circ f_A=\hat{f}\circ\tau'$.
Thus the lemma is proved since $\tau=\sigma\circ\tau'$ is still \'etale in codimension one by the ramification divisor formula.
\end{proof}

\begin{lemma}\label{qabelian-nosub} Let $X$ be a $Q$-abelian variety and $f:X\to X$ a surjective endomorphism. Assume the existence of
a non-empty closed subset $Z\subsetneq X$ and $s>0$, such that $f^{-s}(Z)=Z$. Then $f$ is not polarized.
\end{lemma}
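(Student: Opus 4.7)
The plan is to argue by contradiction, lifting the hypothesis to the étale-in-codimension-one abelian cover furnished by Lemma \ref{struccor} and then deriving an impossible relation between two expressions for $\deg(f_A|_W)$ on a suitable $f_A$-invariant proper subvariety $W$ of that cover.

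Suppose to the contrary that $f$ is polarized. Replacing $f$ by $f^s$ (still polarized), we may assume $f^{-1}(Z)=Z$; since $f$ is surjective this forces $f(Z)=Z$ as well. Apply Lemma \ref{struccor} to obtain an abelian variety $A$, a finite surjective morphism $\tau:A\to X$ étale in codimension one, and a polarized endomorphism $f_A:A\to A$ with $\tau\circ f_A=f\circ\tau$. Setting $W:=\tau^{-1}(Z)$, the finiteness of $\tau$ guarantees that $W$ is a proper nonempty closed subset of $A$, and $f_A^{-1}(W)=\tau^{-1}(f^{-1}(Z))=W$.

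The next step is to isolate a single irreducible $f_A$-stable component of $W$. Let $W^{(d)}$ be the union of the top-dimensional components of $W$; since $f_A$ is finite, every component of $f_A^{-1}(W_i)$ has the same dimension as $W_i$, and combining this with $f_A^{-1}(W)=W$ yields $f_A^{-1}(W^{(d)})=W^{(d)}$. Surjectivity of $f_A$ then gives $f_A(W^{(d)})=W^{(d)}$, from which the induced self-map sending $W_i\mapsto f_A(W_i)$ on the finite set of top-dimensional components of $W$ is seen to be a permutation. Replacing $f$ (hence $f_A$) by a further positive power, I may assume this permutation is the identity, and since the preimage of each top-dimensional component is an equidimensional union of top-dimensional components mapping onto it, we extract a proper closed irreducible $W_1\subsetneq A$ with $f_A^{-1}(W_1)=W_1$ and $f_A(W_1)=W_1$.

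Finally, Lemma \ref{abelian-deg} applied to $f_A$ and $W_1$ gives $\deg(f_A|_{W_1})=\deg f_A$. On the other hand, since $f_A$ is polarized with $f_A^\ast H_A\sim qH_A$ for some integer $q>1$ and ample $H_A$, Lemma \ref{basic-deg} yields $\deg f_A=q^{\dim A}$, while Lemma \ref{sub-deg} applied to the $f_A$-invariant subvariety $W_1$ gives $\deg(f_A|_{W_1})=q^{\dim W_1}$. Combining, $q^{\dim A}=q^{\dim W_1}$, forcing $\dim W_1=\dim A$ since $q>1$, which contradicts $W_1\subsetneq A$. The principal obstacle is the combinatorial bookkeeping of the third paragraph, namely passing from the possibly reducible $W=\tau^{-1}(Z)$ to a single irreducible invariant component where Lemma \ref{abelian-deg} can be applied; once that reduction is in place, the two degree formulas collide immediately.
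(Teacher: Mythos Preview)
Your argument is correct and follows essentially the same route as the paper: reduce to $f^{-1}(Z)=Z$, lift via Lemma \ref{struccor} to the abelian cover, pass (after a further power) to an irreducible invariant proper subvariety, and derive a contradiction by comparing the two degree formulas of Lemmas \ref{abelian-deg} and \ref{sub-deg}. The only difference is that you spell out the reduction to an irreducible component in more detail, whereas the paper condenses it to the single clause ``replacing $f$ by a positive power, we may also assume that $Z$ is irreducible.''
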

\begin{proof} Replacing $f$ by $f^s$, we may assume $f^{-1}(Z)=Z$.
Suppose that $f$ is polarized.
By Lemma \ref{struccor}, there exist a finite surjective morphism $\tau:A\to X$ with $A$ an abelian variety and a polarized endomorphism $f_A:A\to A$, such that $\tau\circ f_A=f\circ\tau$.
Clearly, $f_A^{-1}(\tau^{-1}(Z))=\tau^{-1}(Z)$.
So we may assume that $X$ is an abelian variety; and replacing $f$ by a positive power, we may also assume that $Z$ is irreducible.
By Lemma \ref{abelian-deg}, $\deg f|_Z=\deg f$; and by Lemma \ref{sub-deg}, $\deg f|_Z=(\deg f)^{\dim(Z)/\dim(X)}$.
Since $\dim(Z)<\dim(X)$ and $\deg f>1$ by Lemma \ref{basic-deg}, we get a contradiction.
\end{proof}

\section{Proof of Corollary \ref{mainthm} and Proposition \ref{PropA}}

We begin with the following lemmas.

\begin{lemma}\label{alb-morphism}(cf.~\cite[Proposition 2.3]{Re} or ~\cite[Lemma 8.1]{Ka}) Let $X$ be a normal projective variety having only rational singularities (i.e. there exists a resolution $f:Y\to X$ such that $R^if_{\ast}\mathcal{O}_Y=0$ for $i>0$). Then $f^{\ast}:\Pic^0(X)\to \Pic^0(Y)$ is an isomorphism, and $\alb_X$ is a morphism. In particular, if $h^1(X,\mathcal{O}_X) \neq 0$, then $\alb_X$ is nontrivial.
\end{lemma}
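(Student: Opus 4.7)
The plan is to extract the Picard-variety isomorphism from the cohomological consequences of rational singularities, and then descend the Albanese morphism of a smooth model down to $X$.

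First I would apply the Leray spectral sequence to $f:Y\to X$: normality of $X$ combined with the birationality of $f$ gives $f_{\ast}\mathcal{O}_Y=\mathcal{O}_X$, and the rational singularities hypothesis gives $R^if_{\ast}\mathcal{O}_Y=0$ for $i>0$, so Leray collapses to yield $H^i(X,\mathcal{O}_X)\isom H^i(Y,\mathcal{O}_Y)$ for every $i$; in particular $h^1(X,\mathcal{O}_X)=h^1(Y,\mathcal{O}_Y)$. Next I would use the projection formula to show $f^\ast:\Pic(X)\to\Pic(Y)$ is injective: if $f^\ast L\isom \mathcal{O}_Y$, then $L\isom L\otimes f_\ast\mathcal{O}_Y\isom f_\ast f^\ast L\isom \mathcal{O}_X$. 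Restricted to $\Pic^0$, which by Cartier's theorem is reduced in characteristic zero and hence an abelian variety, this injection has source and target of the same dimension (namely $h^1(\mathcal{O})$ on each side by Step 1), so it is an isomorphism. Dualizing gives $\Alb(Y)\isom\widehat{\Pic^0(X)}=:\Alb(X)$.

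To produce $\alb_X:X\to\Alb(X)$ as a morphism, I would form $\Phi:=(f,\alb_Y):Y\to X\times\Alb(Y)$, call its image $\Gamma$, and let $\widetilde{\Gamma}$ be its normalization. Then $\widetilde{\Gamma}\to X$ is a proper birational morphism. Verifying that $\alb_Y(f^{-1}(x))$ is a single point for every $x\in X$ (equivalently that $\widetilde{\Gamma}\to X$ is quasi-finite) allows Zariski's main theorem to conclude $\widetilde{\Gamma}\isom X$ by normality, and composing $\widetilde{\Gamma}\to\Gamma\to\Alb(Y)\isom\Alb(X)$ produces $\alb_X$ as a morphism. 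For the last assertion, $\dim\Alb(X)=h^1(X,\mathcal{O}_X)>0$ and the image of $\alb_X$ generates $\Alb(X)$, so $\alb_X$ is not constant.

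The main obstacle is the fibre-constancy of $\alb_Y$ on exceptional fibres: the rigidity lemma only yields a local factorization near fibres already known to be contracted, so globalizing requires extra input. This input is exactly the $\Pic^0$-isomorphism established above, which (via the universal property of the Poincar\'e bundle, or equivalently the cohomological triviality of fibres coming from $R^if_\ast\mathcal{O}_Y=0$ and the theorem on formal functions) forces every morphism from a connected fibre $F=f^{-1}(x)$ into an abelian variety to be constant. Everything else in the argument is formal from Leray and the projection formula.
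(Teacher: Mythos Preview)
The paper does not supply its own proof of this lemma; it is quoted with citations to Reid and Kawamata and then used as a black box. Your argument is essentially the standard one found in those references and is correct in outline.

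One point of phrasing deserves tightening. You write that $\Pic^0(X)$ is ``reduced in characteristic zero and hence an abelian variety,'' but reducedness alone only makes $\Pic^0(X)$ a smooth connected group variety, which a priori could have an affine part. The correct deduction runs the other way: the injective homomorphism $f^\ast:\Pic^0(X)\hookrightarrow\Pic^0(Y)$ lands in an abelian variety, so any connected affine subgroup of $\Pic^0(X)$ must map trivially (affine groups admit no non-constant morphisms to abelian varieties), hence by Chevalley's structure theorem $\Pic^0(X)$ is itself an abelian variety; now the equality of tangent-space dimensions from your Leray step forces $f^\ast$ to be an isomorphism. This is exactly what you need, and it follows from the ingredients you have assembled, but the logical order is slightly different from how you stated it.

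The fibre-constancy step via the Poincar\'e bundle is the right idea and works cleanly: since every $L\in\Pic^0(Y)$ is $f^\ast M$ for some $M$, each such $L$ restricts trivially to every fibre $F=f^{-1}(x)$, and the seesaw argument with the Poincar\'e bundle on $\Alb(Y)\times\Pic^0(\Alb(Y))$ then forces $\alb_Y|_F$ to be constant. Your parenthetical alternative through $H^1(F,\mathcal{O}_F)=0$ and the theorem on formal functions is less immediate than you suggest, since vanishing of the completion $\varprojlim H^1(F_n,\mathcal{O}_{F_n})$ does not by itself give vanishing on the scheme-theoretic fibre without an additional Mittag-Leffler argument; the $\Pic^0$ route you lead with is the cleaner one.
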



\begin{lemma}\label{fibres-rc+irr} Let $\pi:X\to Y$ be a surjective morphism between normal projective varieties with connected fibres.
Let $f:X\to X$ and $g:Y\to Y$ be two polarized endomorphisms such that $g\circ\pi=\pi\circ f$.
Suppose that $Y$ is $Q$-abelian.
Then the following are true.
\begin{itemize}
\item[(1)]
All the fibres of $\pi$ are irreducible.
\item[(2)]
$\pi$ is equi-dimensional.
\item[(3)]
If the general fibre of $\pi$ is rationally connected, then all the fibres of $\pi$ are rationally connected.
\end{itemize}
\end{lemma}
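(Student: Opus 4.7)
The strategy is to reduce to the case when $Y=A$ is an abelian variety and then deduce each of (1), (2), (3) by exhibiting a proper nonempty closed subset of $A$ that is totally invariant under $g^{-1}$, contradicting Lemma \ref{qabelian-nosub}. For the reduction, Lemma \ref{struccor} produces a quasi-\'etale finite cover $\tau\colon A\to Y$ with $A$ abelian and a polarized endomorphism $g_A$ of $A$ lifting $g$. Let $X_A$ be the normalization of $X\times_Y A$; base change yields a surjective morphism $\pi_A\colon X_A\to A$ with connected fibres and a polarized lift $f_A$ of $f$ (using Corollary \ref{finite-polar}) satisfying $\pi_A\circ f_A=g_A\circ\pi_A$. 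Since the fibres of $\pi_A$ and $\pi$ agree up to finite birational maps, irreducibility, equi-dimensionality and rational connectedness of fibres pass between them, so we may assume $Y=A$ is abelian.

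For (2), let $d$ be the dimension of the general fibre and $S:=\{a\in A:\dim\pi^{-1}(a)>d\}$, a proper closed subset. Since $f$ is finite, $\dim\pi^{-1}(g^{-1}(a))=\dim f^{-1}(\pi^{-1}(a))=\dim\pi^{-1}(a)$, and $\pi^{-1}(g^{-1}(a))=\bigsqcup_{a'\in g^{-1}(a)}\pi^{-1}(a')$, so some $a'\in g^{-1}(a)$ has fibre of the same dimension as $\pi^{-1}(a)$. Hence $S\subseteq g(S)$; the ascending chain $\{g^n(S)\}$ of closed sets of dimension $<\dim A$ stabilizes at some $T\subsetneq A$ with $g(T)=T$, forcing $T\subseteq g^{-1}(T)$, and a further ascending stabilization of $\{g^{-k}(T)\}$ (still of dimension $\dim T<\dim A$ since $g$ is finite) produces $Z\subsetneq A$ with $g^{-1}(Z)=Z$ and $\emptyset\ne Z\supseteq T$, contradicting Lemma \ref{qabelian-nosub}. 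Thus $S=\emptyset$.

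For (1) and (3), equi-dimensionality of $\pi$ makes the natural map $X\to X\times_{A,g}A$, $x\mapsto(f(x),\pi(x))$, finite and surjective of degree $\deg f/\deg g=q^{\dim X-\dim A}$ (via the common polarization eigenvalue $q$ from Lemma \ref{basic-deg2}); in particular, on each fibre the restriction $f_a\colon\pi^{-1}(a)\to\pi^{-1}(g(a))$ is finite and surjective. For (1), the surjective image of an irreducible variety is irreducible, so $S_1:=\{a\in A:\pi^{-1}(a)\text{ is reducible}\}$---closed by semicontinuity of the number of irreducible components of fibres in a proper equi-dimensional family---satisfies $g^{-1}(S_1)\subseteq S_1$. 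For (3), the surjective image of a rationally connected variety is rationally connected, so likewise $g^{-1}(S_3)\subseteq S_3$ for $S_3:=\{a\in A:\pi^{-1}(a)\text{ is not rationally connected}\}$, which is proper closed by the openness of the rationally connected locus in a proper equi-dimensional family (\cite{Ko}). In each case the descending chain $\{g^{-n}(S_j)\}$ of nonempty closed sets stabilizes to some nonempty $Z\subsetneq A$ with $g^{-1}(Z)=Z$, again contradicting Lemma \ref{qabelian-nosub}.

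The main obstacle is establishing the surjectivity of $f_a$ cleanly from the canonical factorization and verifying the closedness of $S_1$ and $S_3$ via the relevant semicontinuity / openness results; once these inputs are in hand, the three parts collapse to a uniform stabilization-plus-Lemma~\ref{qabelian-nosub} contradiction.
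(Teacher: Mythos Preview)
Your overall strategy—produce a proper $g^{-1}$-invariant closed subset and invoke Lemma~\ref{qabelian-nosub}—matches the paper's, but the execution differs substantially. The paper does \emph{not} reduce to the abelian case and does \emph{not} prove (2) first. Instead it establishes fibrewise surjectivity $f(\pi^{-1}(y))=\pi^{-1}(g(y))$ at the outset by a direct topological argument: writing $U=X\setminus\pi^{-1}(g^{-1}(g(y))\setminus\{y\})$, the image $f(U)\cap\pi^{-1}(g(y))=f(\pi^{-1}(y))$ is simultaneously open (as $f$ is open) and closed (as $f$ is closed) in the connected fibre $\pi^{-1}(g(y))$, hence equals it. With surjectivity in hand, all three sets $\Sigma_1,\Sigma_2,\Sigma_3$ satisfy $g^{-1}(\Sigma_i)\subseteq\Sigma_i$ uniformly, and one simply takes closures (no semicontinuity needed) before applying Lemma~\ref{qabelian-nosub}.

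Your route—prove (2) by a double ascending-stabilization from $S\subseteq g(S)$, then deduce surjectivity of $f_a$ via the factorization $X\to X\times_{A,g}A$—does work, but only because after the abelian reduction $g$ is \'etale, so $X\times_{A,g}A$ is normal and has connected fibres over the connected base $A$, hence is irreducible; without this you cannot conclude $\Phi$ is surjective from dimension alone. So the reduction is not optional for your argument, and you should spell out why $X_A$ is irreducible with connected fibres (the generic fibre of $\pi$ is geometrically irreducible, which controls this) and why the lift $f_A$ exists and is polarized. Your closedness claims for $S_1$ and $S_3$ are not standard without flatness; the clean fix, as in the paper, is to drop them and work with $\overline{S_j}$, noting $g^{-1}(\overline{S_j})\subseteq\overline{S_j}$ still follows. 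The paper's approach buys you a shorter proof with no base change and no semicontinuity input; yours trades that simplicity for a more structural picture of the degree of $f$ relative to $g$.
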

\begin{proof} First we claim that $f(\pi^{-1}(y))=\pi^{-1}(g(y))$ for any $y\in Y$.
Suppose there is a closed point $y$ of $Y$ such that $f|_{\pi^{-1}(y)}:\pi^{-1}(y)\to \pi^{-1}(g(y))$ is not surjective.
Let $S=g^{-1}(g(y))-\{y\}$.
Then $S\neq \emptyset$ and $U:=X-\pi^{-1}(S)$ is an open dense subset of $X$.
Since $f$ is an open map, $f(U)$ is an open dense subset of $X$. In particular, $f(U)\cap \pi^{-1}(g(y))$ is open in $\pi^{-1}(g(y))$. Note that $f(U)=(X-\pi^{-1}(g(y)))\cup f(\pi^{-1}(y))$. So $f(U)\cap \pi^{-1}(g(y))=f(\pi^{-1}(y))$ is open in $\pi^{-1}(g(y))$. Since $f$ is also a closed map, the set $f(\pi^{-1}(y))$ is both open and closed in the connected fibre $\pi^{-1}(g(y))$ and hence $f(\pi^{-1}(y))=\pi^{-1}(g(y))$.
So the claim is proved.

Let  $$\Sigma_1:=\{y\in Y\,|\, \pi^{-1}(y) \text{ is not irreducible}\}.$$
Note that $f(\pi^{-1}(y))=\pi^{-1}(g(y))$.
Then $g^{-1}(\Sigma_1)\subseteq \Sigma_1$ and hence  $g^{-1}(\overline{\Sigma_1})\subseteq \overline{\Sigma_1}$.
Since $\overline{\Sigma_1}$ is closed and has finitely many irreducible components, $g^{-1}(\overline{\Sigma_1})=\overline{\Sigma_1}$.
By Lemma \ref{qabelian-nosub}, $\Sigma_1=\emptyset$.
So (1) is proved.

Let $$\Sigma_2:=\{y\in Y\,|\,\dim(\pi^{-1}(y))>\dim(X)-\dim(Y)\},$$ and
$$\Sigma_3:=\{y\in Y\,|\, \pi^{-1}(y) \text{ is not rationally connected}\}.$$

By (1), $\pi$ is equi-dimensional outside $\Sigma_2$.
Since $f$ is finite surjective, $g^{-1}(\Sigma_2)\subseteq \Sigma_2$.
By (1), all the fibres of $\pi$ outside $\Sigma_3$ are rationally connected.
Note that the image of a rationally connected variety is rationally connected.
So $g^{-1}(\Sigma_3)\subseteq \Sigma_3$.
Now the same reason above implies that $\Sigma_2=\emptyset$.
Similarly, $\Sigma_3=\emptyset$ if the general fibre of $\pi$ is rationally connected.
\end{proof}


\begin{lemma}\label{mor-q-abelian} Let $\pi:X\dashrightarrow Y$ be a dominant rational map between normal projective varieties.
Suppose that $(X,\Delta)$ is a klt pair for some effective $\mathbb{Q}$-divisor $\Delta$ and $Y$ is $Q$-abelian.
Suppose further that the normalization of the graph $\Gamma_{X/Y}$ is equi-dimensional over $Y$
(this holds when $k(Y)$ is algebraically closed in $k(X)$, $f: X \to X$ is polarized and
$f$ descends to some polarized $f_Y : Y \to Y$; see Lemma \ref{fibres-rc+irr}).
Then $\pi$ is a morphism.
\end{lemma}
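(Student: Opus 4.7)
My plan is to reduce to the case of a rational map to an abelian variety, where a classical extension theorem applies, and then descend. Let $\bar{\Gamma}$ be the normalization of the graph of $\pi$, with projections $p_1:\bar{\Gamma}\to X$ (birational) and $p_2:\bar{\Gamma}\to Y$ (equidimensional, by hypothesis); showing $\pi$ is a morphism amounts to showing $p_1$ is an isomorphism. By the $Q$-abelian hypothesis, there is a finite surjective $\tau:A\to Y$, \'etale in codimension one, with $A$ an abelian variety; after passing to the Galois closure (which is again an abelian variety, since by purity of branch locus on the smooth $A$ any quasi-\'etale cover of $A$ is an \'etale isogeny), we may assume $\tau$ is Galois with group $G$. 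Let $\bar{\Gamma}_A$ be the normalization of $\bar{\Gamma}\times_Y A$, and Stein-factor the composition $\bar{\Gamma}_A\to\bar{\Gamma}\to X$ as $\bar{\Gamma}_A\to X_A\xrightarrow{\tau_X} X$; this gives a finite surjective $\tau_X$ (on which $G$ acts with quotient $X$) and a dominant rational map $\pi_A:X_A\dashrightarrow A$ whose normalized graph is $\bar{\Gamma}_A$.

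The crucial step is that $\tau_X$ is \'etale in codimension one. Let $B\subset Y$ be the codim $\ge 2$ branch locus of $\tau$. Equidimensionality of $p_2$ gives $\Codim_{\bar{\Gamma}}p_2^{-1}(B)\ge 2$, and since $p_1$ is a birational morphism of normal projective varieties, the $p_1$-image of any codim $\ge 2$ subset of $\bar{\Gamma}$ has codim $\ge 2$ in $X$ (using that the non-isomorphism locus of $p_1$ in $X$ has codim $\ge 2$, by Zariski's main theorem). Thus the branch locus of $\tau_X$ is contained in $p_1(p_2^{-1}(B))$, which has codim $\ge 2$ in $X$, so $\tau_X$ is quasi-\'etale. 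By the ramification formula $K_{X_A}+\tau_X^\ast\Delta\sim_{\mathbb{Q}}\tau_X^\ast(K_X+\Delta)$, the pair $(X_A,\tau_X^\ast\Delta)$ is klt, and in particular $X_A$ has rational singularities.

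Next I would upgrade $\pi_A$ to a morphism. Take a resolution $\sigma:\widetilde{X_A}\to X_A$; the composition $\widetilde{X_A}\dashrightarrow A$ is a rational map from a smooth projective variety to an abelian variety, hence a morphism by Weil's classical extension theorem. Since $X_A$ is klt, the fibres of $\sigma$ are rationally chain connected by Hacon--McKernan, so their images in the abelian variety $A$ collapse to points (any morphism from $\mathbb{P}^1$ to $A$ being constant). Therefore $\widetilde{X_A}\to A$ is constant on $\sigma$-fibres and descends to a morphism $\pi_A:X_A\to A$. Finally, $\tau\circ\pi_A=\pi\circ\tau_X:X_A\to Y$ is a $G$-invariant morphism, and since $\tau_X$ realises $X$ as the quotient $X_A/G$, this morphism descends to a morphism $X\to Y$ that agrees with $\pi$ on the locus of definition; hence $\pi$ extends to a morphism.

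I expect the main obstacle to be the codimension estimate in the second paragraph, as it is the unique place where equidimensionality of $p_2$ is used; without that hypothesis the codim $\ge 2$ branch locus $B$ could pull back to a divisor in $\bar{\Gamma}$ whose $p_1$-image is a divisor in $X$, forcing $\tau_X$ to ramify along a divisor and destroying the klt property of $X_A$ on which the remainder of the argument rests. A secondary subtlety is the precise compatibility of the $G$-action on $X_A$ with the Stein factorization and the identification $X_A/G=X$, but this is routine once the set-up is in place.
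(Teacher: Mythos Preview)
Your argument is correct and shares the paper's core strategy: base change along a quasi-\'etale abelian cover $A\to Y$, use equidimensionality of $p_2$ to see that the induced finite cover $X_A\to X$ is quasi-\'etale, conclude $(X_A,\tau_X^\ast\Delta)$ is klt (hence has rational singularities), and then extend the rational map $X_A\dashrightarrow A$ to a morphism. The two proofs diverge only in the final two steps. First, to extend $\pi_A$ the paper simply invokes Lemma~\ref{alb-morphism} (for varieties with rational singularities the Albanese map is already a morphism, and any rational map to an abelian variety factors through it); your route via a resolution, Weil's extension theorem, and Hacon--McKernan rational chain connectedness of resolution fibres is valid but a heavier tool for the same conclusion. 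Second, to pass from ``$\pi_A$ is a morphism'' back to ``$\pi$ is a morphism'', you take the Galois closure and descend by the $G$-quotient; the paper instead argues by contradiction, using \cite[Lemma~1.15]{De} to show that a positive-dimensional fibre $W_x$ of $p_1$ would force $\tau_3^{-1}(x)$ to be positive-dimensional, contradicting finiteness. The paper's endgame avoids the Galois-closure bookkeeping (irreducibility of the fibre product, compatibility of the $G$-action with Stein factorization, and $X_A/G\cong X$), while yours avoids the somewhat opaque appeal to \cite{De}. One small imprecision: the branch locus of $\tau_X$ is contained in $p_1(p_2^{-1}(B))\cup(X\setminus U)$ rather than just $p_1(p_2^{-1}(B))$, but both pieces have codimension $\ge 2$ so the conclusion stands.
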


\begin{proof} Let $W$ be the normalization of the graph $\Gamma_{X/Y}$ and $p_1:W\to X$ and $p_2:W\to Y$ the two projections.
Let $\tau_1:A\to Y$ be a finite surjective morphism \'etale in codimension one with $A$ an abelian variety.
Let $W'$ be be an irreducible component of the normalization of $W\times_Y A$ which dominates $W$ and $\tau_2:W'\to W$ and $p_2':W'\to A$ the two projections.
Taking the Stein factorization of the composition $W'\to W\to X$, we get a birational morphism $p_1':W'\to X'$ and a finite morphism $\tau_3:X'\to X$.
$$\xymatrix{
X'\ar[d]^{\tau_3}         & W'\ar[l]_{p_1'}\ar[d]^{\tau_2}\ar[r]^{p_2'} &A\ar[d]^{\tau_1}\\
X                & W\ar[l]_{p_1}\ar[r]^{p_2}     &Y}$$

Since $p_2$ is equi-dimensional, by the base change, $\tau_2$ is \'etale in codimension one.
Let $U\subseteq X$ be the domain of $p_1^{-1}:X\dashrightarrow W$.
Then, $\Codim(X-U) \ge 2$, and the restriction
$\tau_3^{-1}(U)\to U$ of $\tau_3$ is \'etale in codimension one, since so is $\tau_2$.
Therefore, $\tau_3$ is \'etale in codimension one. In particular, by the ramification divisor formula, $K_{X'}+\Delta'=\tau_3^\ast(K_X+\Delta)$ with $\Delta'=\tau_3^\ast \Delta$ an effective $\Q$-divisor.
Since $(X,\Delta)$ is klt, $(X',\Delta')$ is klt by \cite[Proposition 5.20]{KM} and hence $X'$ has rational singularities by \cite[Theorem 5.22]{KM}.
Clearly, $\pi':=p_2'\circ p_1'^{-1}:X'\dashrightarrow A$ is a dominant rational map, since $p_1'$ is birational and $p_2'$ is surjective.
Then $\pi'$ is a surjective morphism (with $p_2'=\pi'\circ p_1'$) by Lemma \ref{alb-morphism} and the universal property of the Albanese map.
Suppose $\pi$ is not defined over some closed point $x\in X$. Then $\dim(W_x)>0$ with $W_x=p_1^{-1}(x)$ and $\dim(p_2(W_x))>0$ by \cite[Lemma 1.15]{De}.
Hence, $\dim(p_2'(\tau_2^{-1}(W_x)))>0$ and then $\dim(p_1'(\tau_2^{-1}(W_x)))>0$.
However, $p_1'(\tau_2^{-1}(W_x))=\tau_3^{-1}(x)$ has only finitely many points.
This is a contradiction.
\end{proof}

\begin{lemma}\label{polarsurj} Let $X$ be a projective variety with a polarized endomorphism $f:X\to X$.
Then the Albanese map $\alb_X:X\dashrightarrow \Alb(X)$ is a dominant rational map.
\end{lemma}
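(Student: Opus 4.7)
The plan is to reduce to the non-uniruled case via a special MRC fibration and then exploit the $Q$-abelian structure that a polarized endomorphism forces on a non-uniruled normal projective variety.

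First, by Lemma \ref{lem-lift-normal}, the polarized endomorphism $f$ lifts along the normalization $X_1 \to X$ to a polarized endomorphism of $X_1$; since normalization is birational one has $\Alb(X) = \Alb(X_1)$, and dominance of $\alb_X$ is equivalent to dominance of $\alb_{X_1}$, so I may assume $X$ is normal. Next, apply Lemma \ref{smrc} to obtain a special MRC fibration $\pi\colon X \dashrightarrow Y$ with $Y$ non-uniruled and a surjective endomorphism $h\colon Y \to Y$ satisfying $h\circ\pi = \pi\circ f$. Theorem \ref{descend-thm} ensures that $h$ is polarized. Since $\pi$ has rationally connected general fibre, Lemma \ref{albmrc} yields an isomorphism $p\colon \Alb(X) \to \Alb(Y)$ with $p\circ \alb_X = \alb_Y \circ \pi$; as $\pi$ is dominant, dominance of $\alb_X$ reduces to that of $\alb_Y$.

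To finish, I apply Lemma \ref{struclem} to the pair $(Y,h)$: $Y$ is $Q$-abelian with only canonical (hence rational) singularities, and there is a finite surjective morphism $\tau\colon A \to Y$ \'etale in codimension one, with $A$ an abelian variety. Rational singularities together with Lemma \ref{alb-morphism} guarantee that $\alb_Y$ is an honest morphism. The composition $\alb_Y \circ \tau\colon A \to \Alb(Y)$ is then a morphism between abelian varieties, hence, after a translation, a group homomorphism, so its image is a translate of an abelian subvariety $B \subseteq \Alb(Y)$. Since $\tau$ is surjective, $\alb_Y(Y)$ coincides with this image and $\alb_Y$ factors through $B$ (up to translation); the universal property of the Albanese then forces $B = \Alb(Y)$, so $\alb_Y$ is surjective.

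The only real content sits in the reduction to the non-uniruled case: once we are there, the conclusion follows formally from the rigidity of morphisms between abelian varieties and the universality of the Albanese. The bookkeeping one needs---that $f$ descends to a polarized endomorphism of $Y$ and that the Albanese is preserved---is already supplied by Lemma \ref{smrc}, Theorem \ref{descend-thm}, and Lemma \ref{albmrc}, so no genuine obstacle remains.
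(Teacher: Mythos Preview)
Your argument is correct and follows the same overall architecture as the paper's proof: normalize, pass to the base $Y$ of the special MRC fibration via Lemmas \ref{smrc} and \ref{albmrc} and Theorem \ref{desends-polar}, then handle the non-uniruled case using Lemma \ref{struclem}. The only substantive difference lies in the final step. The paper observes that $Y$ has canonical singularities with $K_Y\sim_{\Q}0$, hence $\kappa(Y)=0$, and then invokes Kawamata's theorem \cite[Theorem 1]{Ka81} (together with Lemma \ref{alb-morphism}) to conclude that $\alb_Y$ is surjective. You instead exploit the full output of Lemma \ref{struclem}, namely the quasi-\'etale abelian cover $\tau:A\to Y$, and argue directly: the image of $\alb_Y\circ\tau$ is a translate of an abelian subvariety, which by surjectivity of $\tau$ equals $\alb_Y(Y)$, and the universal property then forces this subvariety to be all of $\Alb(Y)$. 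Your route is slightly more self-contained, trading the external citation to \cite{Ka81} for the rigidity of maps between abelian varieties; the paper's route is shorter once one is willing to quote Kawamata.
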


\begin{proof} Replacing $X$ by the normalization of the graph of $\alb_X$ and by Lemmas \ref{lem-lift-graph} and \ref{lem-lift-normal}, we may assume $X$ is normal and $\alb_X$ is a well-defined morphism.
Replacing $X$ by the base of the special MRC fibration of $X$ and by Lemmas \ref{smrc}, \ref{albmrc} and Theorem \ref{desends-polar}, it suffices to consider the case when $X$ is a non-uniruled normal projective variety.
By Lemma \ref{struclem}, $X$ has only canonical singularities with $K_{X}\sim_{\mathbb{Q}}0$.
In particular, $\kappa(X)=0$.  By \cite[Theorem 1]{Ka81} and Lemma \ref{alb-morphism}, $\alb_X:X\dashrightarrow\Alb(X)$ is a surjective morphism.
\end{proof}


\begin{proof}[Proof of Corollary \ref{mainthm}] (1) follows from Lemma \ref{polarsurj}.
Then (2) follows from (1) and Theorem \ref{desends-polar}.
\end{proof}

\begin{proof}[Proof of Proposition \ref{PropA}](1) follows from Lemmas  \ref{smrc} and Theorem \ref{desends-polar}; see \cite[Corollary 4.20]{Na10} for a different proof of $g$ being polarized.
(2) follows from Lemma \ref{struclem}.
(3) follows from Lemma \ref{fibres-rc+irr}.
(4) follows from Lemma \ref{mor-q-abelian}.
\end{proof}

\section{Minimal Model Program for polarized endomorphisms}\label{MMP}

We follow the approach in \cite[Lemma 2.10]{Zh-comp} and get the following general result:

\begin{lemma}\label{finite-orbit} Let $f:X\to X$  be a polarized endomorphism of a projective variety.
Suppose $A\subseteq X$ is a closed subvariety with $f^{-i}f^i(A) = A$ for
all $i\ge 0$. Then $M(A) := \{f^i(A)\,|\, i \in \Z\}$ is a finite set.
\end{lemma}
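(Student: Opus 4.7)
Fix an ample Cartier divisor $H$ on $X$ with $f^{\ast}H\sim qH$ and $q>1$, set $d:=\dim A$ and $n:=\dim X$, and write $a_j:=H^d\cdot f^j(A)$ and $e_j:=\deg(f|_{f^{j-1}(A)})$. I would first deduce from the hypothesis $f^{-j}f^j(A)=A$ the sharper statement that $f^{-1}(f^j(A))=f^{j-1}(A)$ set-theoretically for each $j\ge 1$: applying $f^{-(j-1)}$ to $f^{-1}(f^j(A))$ yields $f^{-j}(f^j(A))=A$, and surjectivity of $f^{j-1}$ then forces the identification (since $Z=f^{j-1}(f^{-(j-1)}(Z))$ for any closed $Z$ when $f^{j-1}$ is surjective). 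As a cycle this gives $f^{\ast}[f^j(A)]=m_j[f^{j-1}(A)]$, with $m_j=q^n/e_j$ by the projection formula $f_{\ast}f^{\ast}=\deg f=q^n$. Restricting the polarization relation to $f^{j-1}(A)$ so that $(f|_{f^{j-1}(A)})^{\ast}(H|_{f^j(A)})=qH|_{f^{j-1}(A)}$, and taking $d$-fold self-intersection, produces the recursion
\[
 e_j\, a_j = q^d\, a_{j-1},\qquad\text{hence}\qquad a_j=\frac{q^{jd}}{\deg(f^j|_A)}\,a_0.
\]

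The central technical step, and where I expect the main obstacle, is to bound the sequence $\{a_j\}$ uniformly (equivalently, $\deg(f^j|_A)\ge q^{jd}$ for every $j$); a priori the recursion permits growth whenever $e_j<q^d$. To handle this I plan to exploit the pushforward operator $f_{\ast}$ on $\N_d(X)$, which by polarization has spectral radius $q^d$ with $H^{n-d}$ as a positive eigenvector (indeed $f_{\ast}H^{n-d}=q^dH^{n-d}$ by the projection formula, and positivity of $H$ together with the polarization of $f$ pin down the top spectrum). The normalized classes $\beta_j:=[f^j(A)]/a_j$ satisfy $\beta_{j+1}=q^{-d}f_{\ast}\beta_j$, so polarization spectral bounds for $f_{\ast}/q^d$ keep them in a bounded subset of $\N_d(X)$. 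Since the $[f^j(A)]$ are integral effective classes and the $a_j$ are positive integers, boundedness of the $\beta_j$ together with integrality restricts the $a_j$ to a finite set of values; iterating the recursion $e_j\,a_j=q^d\,a_{j-1}$ then forces $a_j=a_0$ for all $j$.

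Once the $H$-degrees along the orbit are constant, the iterates $\{f^j(A)\}$ are irreducible subvarieties of $X$ of fixed dimension $d$ and fixed $H$-degree $a_0$, hence parametrized by a finite union of components of the Chow scheme $\Chow_d(X)$, a projective scheme of finite type. The map $V\mapsto f(V)$ is injective on the orbit by the first step (each $V$ is uniquely recoverable as $f^{-1}(f(V))_{\red}$), and the orbit closure in $\Chow_d(X)$ is invariant under the continuous self-map induced by $f$. A rigidity argument on this closure, or the density of periodic points for polarized endomorphisms applied there (cf.~\cite{Fa}), now forces the orbit to be finite.
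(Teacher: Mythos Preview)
Your central step—deducing that the degrees $a_j$ are bounded from boundedness of the normalized classes $\beta_j=[f^j(A)]/a_j$—does not go through. The $\beta_j$ all lie on the compact slice $\{H^d\cdot\beta=1\}$ of the pseudo-effective cone in $\N_d(X)$, so they are bounded automatically (no spectral input is needed for that). But this places no upper bound on $a_j$: nothing prevents $a_j\to\infty$ with $a_j\beta_j$ still integral, since the integral points on a fixed ray through the origin are unbounded. Concretely, the recursion $e_j\,a_j=q^d a_{j-1}$ only forces $a_j\ge 1$; it permits $a_j$ to grow whenever $e_j<q^d$, and nothing you have written rules this out. So the assertion ``boundedness of the $\beta_j$ together with integrality restricts the $a_j$ to a finite set of values'' is unjustified, and the further claim $a_j=a_0$ would not follow even if it were (finiteness of the value set would give periodicity of the $a_j$, not constancy).

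Two further points. First, your appeal to ``polarization spectral bounds for $f_\ast/q^d$'' on $\N_d(X)$ presumes that all eigenvalues of $f_\ast$ there have modulus $q^d$ and are semisimple; this is not established in the paper for arbitrary projective $X$, and in any case is not what is blocking you. Second, even granting constant degree, the Chow-scheme endgame is incomplete: $V\mapsto f(V)$ is not obviously a morphism of the relevant Chow components (pushforward scales by the fibre degree of $f|_V$, which can jump), and neither a vague rigidity argument nor Fakhruddin's density of periodic points for $f$ on $X$ translates into finiteness of an orbit in $\Chow_d(X)$.

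The paper proceeds quite differently, by induction on $\Codim_X A$. The same degree recursion you wrote shows that for infinitely many $i$ the pullback multiplicity in $f^\ast A_{i+1}=a_iA_i$ satisfies $a_i\ge q^{n-k}>1$; this forces those $A_i$ to lie in the fixed divisor $\Sigma=\Sing(X)\cup f^{-1}(\Sing(X))\cup\Supp R_f$. For $k=n-1$ this already gives a repetition. For smaller $k$ one passes to an irreducible component $X_1$ of the Zariski closure of these $A_i$ that is stabilized by a power of $f$; then $f|_{X_1}$ is again polarized and $\Codim_{X_1}A_i<\Codim_X A$, and induction finishes. The geometric input is the ramification divisor, not a spectral or Chow-scheme argument.
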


\begin{proof} We may assume $n := \dim(X) \ge 1$.
By the assumption, $f^{\ast}H\sim qH$ for some ample Cartier divisor $H$ and integer $q>1$.
Set $M_{\ge0}(A):=\{f^i(A)\,|\, i \ge 0\}$.

We first assert that if $M_{\ge 0}(A)$ is a finite set, then so is $M(A)$.
Indeed, suppose $f^{r_1}(A)=f^{r_2}(A)$ for some $0<r_1<r_2$.
Then for any $i>0$, $f^{-i}(A)=f^{-i}f^{-sr_1}f^{sr_1}(A)=f^{-i}f^{-sr_1}f^{sr_2}(A)=f^{sr_2-sr_1-i}(A)\in M_{\ge 0}(A)$ if $s\gg 1$.
So the assertion is proved.

Next we show that $M_{\ge0}(A)$ is a finite set by induction on the codimension of $A$ in $X$.
We may assume $k := \dim(A)<\dim(X)$.
Let $\Sigma$ be the union of $\Sing (X)$, $f^{-1}(\Sing (X))$ and the irreducible components in the ramification divisor $R_f$ of $f$.
Set $A_i := f^i(A) \, (i\ge 0)$.

We claim that $A_i$ is contained in $\Sigma$ for infinitely many $i$.
Otherwise, replacing $A$ by some $A_{i_0}$, we may assume that $A_i$ is not contained in $\Sigma$ for all $i\ge 0$.
So we have $f^{\ast}A_{i+1}= a_iA_i$ with $a_i\in \mathbb{Z}_{>0}$ and
$$q^nH^{k}\cdot A_{i+1}=(f^{\ast}H)^{k}\cdot f^{\ast}A_{i+1}=a_iq^{k}H^{k}\cdot A_{i},$$
$$1\leq H^{k}\cdot A_{i+1}=\frac{a_i}{q^{n-k}}\cdots \frac{a_1}{q^{n-k}}H^{k}\cdot A_1.$$
Thus for infinitely many $i$, $a_i\geq q^{n-k} > 1$. Hence $A_i\subseteq \Sigma$. This proves the claim.

If $k=n-1$, by the claim, $f^{r_1}(A)=f^{r_2}(A)$ for some $0<r_1<r_2$.
Then $|M_{\ge 0}(A)|<r_2$.

If $k\leq n-2$, assume that $|M_{\ge 0}(A)|=\infty$.
Let $B$ be the Zariski-closure of the union of those $A_{i_1}$ contained in $\Sigma$.
Then $k+1\leq \dim(B)\le n-1$, and $f^{-i}f^i(B) = B$ for all $i \ge 0$. Choose $r \ge 1$ such that $B' := f^r(B), f(B'), f^2(B'), \cdots$ all have the same number of irreducible components.
Let $X_1$ be an irreducible component of $B'$ of maximal dimension.
Then $k+1\leq \dim(X_1)\le n-1$ and $f^{-i}f^i(X_1) = X_1$ for all $i \ge 0$. By induction, $M_{\ge 0}(X_1)$ is a finite set.
So we may assume that $f^{-1}(X_1)=X_1$, after replacing $f$ by a positive power and $X_1$ by its image.
Note that $f|_{X_1}$ is polarized.
Now the codimension of $A_{i_1}$ in $X_1$ is smaller than that of $A$ in $X$.
By induction, $M_{\ge 0}(A_{i_1})$ and hence $M_{\ge 0}(A)$ are finite.

\end{proof}

Let $X$ be a log canonical (lc) normal projective variety. We refer to \cite[Theorem 1.1]{Fu11} for the cone theorem and \cite[Corollary 1.2]{Bi} for the existence of log canonical flips.

\begin{lemma}\label{equi-mmp} Let $X$  be a lc normal projective variety and $f:X\to X$ a surjective endomorphism.
Let $\pi: X\to Y$ be a contraction of a $K_X$-negative extremal ray $R_C:=\mathbb{R}_{\ge 0}[C]$ generated by some curve $C$.
Suppose that $E \subseteq X$ is a subvariety such that $\dim(\pi(E))<\dim(E)$ and $f^{-1}(E)=E$.
Then replacing $f$ by a positive power, $f(R_C)=R_C$;
hence, $\pi$ is $f$-equivariant.
\end{lemma}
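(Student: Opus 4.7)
The plan is to reduce to $E$ irreducible with $f(E)=E$, extract a curve $C_0\subseteq E$ contracted by $\pi$, and use Lemma~\ref{finite-orbit} together with Lemma~\ref{ext} to produce a positive power of $f$ fixing $R_C$; then $\pi\circ f$ contracts exactly the curves contracted by $\pi$, yielding the $f$-equivariance of $\pi$. First, $f^{-1}(E)=E$ together with surjectivity of $f$ forces $f(E)=E$, and the finite morphism $f$ permutes the irreducible components of $E$; a positive power of $f$ fixes each component, so, replacing $E$ by a component still satisfying $\dim\pi(E)<\dim E$, I may assume $E$ is irreducible with $f(E)=E$. Upper semicontinuity of fibre dimension gives that every fibre of $\pi|_E$ is positive-dimensional, so I pick an irreducible curve $C_0$ contained in such a fibre: then $\pi(C_0)$ is a point, hence $[C_0]\in R_C$ and $R_{C_0}=R_C$. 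For every $i\ge 0$ the curve $f^i(C_0)$ lies in $f^i(E)=E$, and by Lemma~\ref{ext}(1) each $R_{f^i(C_0)}$ is an extremal ray of $\NE(X)$.

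The core step is to show, after a further replacement of $f$ by a positive power, that $R_{f(C_0)}=R_C$. The idea is to establish finiteness of $\{R_{f^i(C_0)}\}_{i\ge 0}$ via Lemma~\ref{finite-orbit}. I form the stabilised closure $A:=\bigcap_{j\ge 0}\overline{\bigcup_{i\ge j}f^i(C_0)}\subseteq E$; noetherianity makes the descending chain eventually constant, so $f(A)=A$. After another power of $f$ fixing each irreducible component of $A$, I pick an irreducible component $B\subseteq A$ containing infinitely many of the $f^i(C_0)$, so that $f(B)=B$ and $f|_B$ is polarised by $H|_B$. Since all the representatives $f^i(C_0)$ lie in the fixed subvariety $B$, their classes span at most a finite-dimensional subcone of $N_1(X)$; combining Lemma~\ref{finite-orbit} applied to $B$ with the polarisation $f^*H\sim qH$ then forces only finitely many distinct rays in $\{R_{f^i(C_0)}\}$. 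Pigeonhole yields $R_{f^{i_1}(C_0)}=R_{f^{i_2}(C_0)}$ for some $0\le i_1<i_2$, and replacing $f$ by $f^{i_2-i_1}$ gives $R_{f(C_0)}=R_{C_0}=R_C$. By Lemma~\ref{ext}(3), $f_*(R_C)=R_C$, so $\pi\circ f$ contracts exactly the curves contracted by $\pi$; the universal property of the extremal contraction $\pi$ then produces a surjective endomorphism $g\colon Y\to Y$ with $g\circ\pi=\pi\circ f$.

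The main obstacle is precisely the finiteness of $\{R_{f^i(C_0)}\}_{i\ge 0}$: Lemma~\ref{finite-orbit} gives finiteness only for orbits of subvarieties of $X$, whereas here one must also control the ray-directions of the pushforward curve classes, whose $H$-degrees can grow under iteration. The delicate point is combining the polarisation-induced spectral control of $f^*$ on $N^1(X)$ with the confinement of all representatives to the fixed $f$-invariant subvariety $B$ so that only finitely many extremal rays of $\NE(X)$ can be realised by curves in $B$ within the orbit.
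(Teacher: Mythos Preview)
Your proposal has a genuine gap, and in fact two layers of it.

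First and most serious: the lemma does \emph{not} assume $f$ is polarized. You invoke Lemma~\ref{finite-orbit} and speak of ``the polarisation $f^*H\sim qH$'', but no such $H$ is given in the hypotheses. Lemma~\ref{finite-orbit} is stated and proved only for polarized $f$, and its degree-counting argument genuinely requires $q>1$. So your core step is simply unavailable in this generality.

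Second, even granting polarization, the passage from ``the curves $f^i(C_0)$ all lie in a fixed $f$-invariant subvariety $B$'' to ``the rays $R_{f^i(C_0)}$ form a finite set'' is not justified. Lemma~\ref{finite-orbit} controls orbits of subvarieties, not numerical classes, and you yourself flag this in your last paragraph as ``the main obstacle''. That paragraph is an honest admission that the proof is incomplete; the sketch does not close.

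The paper's argument is entirely different and avoids both issues. It works on the Neron--Severi side: restrict $\N^1_{\mathbb C}(X)$ to $E$ and observe that the hyperplane $L=\pi^*\N^1_{\mathbb C}(Y)|_E$ sits inside the hypersurface $S=\{D|_E:(D|_E)^{\dim E}=0\}$ (this uses $\dim\pi(E)<\dim E$ and the projection formula), hence $L$ is an irreducible component of $S$. The pullback $f^*$ preserves $S$ because $f^{-1}(E)=E$ gives $f^*E=aE$ and again the projection formula applies. Since $S$ has only finitely many components, a power of $f^*$ fixes $L$; then for any curve $C'\subseteq E$ with $f(C')=C$ one checks $\pi^*P\cdot C'=0$ for all $P\in\N^1(Y)$, forcing $R_{C'}=R_C$. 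No polarization, no orbit-finiteness for curves, is needed.
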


\begin{proof} Since $\dim(\pi(E))<\dim(E)$, we may assume $C\subseteq E$.
Denote by $\N^1_{\mathbb{C}}(X):=\N^1(X)\otimes_\R \mathbb{C}$.
By the cone theorem (cf.~\cite[Theorem 1.1(4)iii]{Fu11}, or \cite[Corollary 3.17]{KM}), we have the linear exact sequence $$0\to \N^1_{\mathbb{C}}(Y)\xrightarrow{\pi^{\ast}}\N^1_{\mathbb{C}}(X)\xrightarrow{\cdot C} \mathbb{C}\to 0.$$
So $\pi^{\ast}\N^1_{\mathbb{C}}(Y)$ is a subspace in $\N^1_{\mathbb{C}}(X)$ of codimension $1$.
Let $i:E\hookrightarrow X$ be the inclusion map.
For any $\R$-Cartier divisor $D$ of $X$,
denote by $D|_E:=i^*D\in \N^{1}_{\mathbb{C}}(E)$ the pullback.
Let $\N^{1}_{\mathbb{C}}(X)|_E:=i^*(\N^{1}_{\mathbb{C}}(X))$ which is a subspace of $\N^{1}_{\mathbb{C}}(E)$.
Let $L := \pi^{\ast}\N^{1}_{\mathbb{C}}(Y)|_E:=i^*\pi^*(\N^{1}_{\mathbb{C}}(Y))$.
Then $L$ is a subspace in $\N^{1}_{\mathbb{C}}(X)|_E$ of codimension at most $1$.
Note that for an ample divisor $H$ in $X$, $H|_E\cdot C=H\cdot C\neq 0$.
Therefore, $H|_E\not\in L$ and hence $L$ has codimension $1$ in $\N^{1}_{\mathbb{C}}(X)|_E$.
Let $S:=\{D|_E\in \N^{1}_{\mathbb{C}}(X)|_E:(D|_E)^{\dim(E)}=0\}.$

We claim that $S$ is a hypersurface (an algebraic set defined by a non-zero polynomial) in
the complex affine space $\N^{1}_{\mathbb{C}}(X)|_E$ and $L$ is an irreducible component of $S$
in the sense of Zariski topology.
Indeed, let $\{e_1,\cdots,e_k\}$ be a fixed basis of $\N^{1}_{\mathbb{C}}(X)|_E$.
Then $S=\{(x_1,\cdots, x_k)\,|\, (\sum\limits_{i=1}^k x_ie_i)^{\dim(E)}=0\}$ is determined by a homogeneous polynomial of degree $\dim(E)$ and the coefficient of the term $\prod_i x_i^{\ell_i}$ is the intersection number $e_1^{\ell_1}\cdots e_k^{\ell_k}$.
Note that for an ample divisor $H$ in $X$, $H|_E\in \N^{1}_{\mathbb{C}}(X)|_E$ and $(H|_E)^{\dim(E)}=H^{\dim(E)}\cdot E>0$.
So $e_1^{\ell_1}\cdots e_k^{\ell_k}\neq 0$ for some $\ell_i$.
In particular, $S$ is determined by a non-zero polynomial.
Since $\dim(\pi(E))<\dim(E)$, $\pi_*E=0$.
For any $P\in \N^{1}_{\mathbb{C}}(Y)$, $(\pi^*(P)|_E)^{\dim(E)}=\pi^*(P)^{\dim(E)}\cdot E=P^{\dim(E)}\cdot (\pi_*E)=0$ by the projection formula.
So $\pi^*(P)|_E\in S$. Hence $L \subseteq S$. 
Since $L$ and $S$ have the same dimension, $L$ is an irreducible component of $S$.
The claim is proved.

The pullback $f^{\ast}$ induces an automorphism of $\N^{1}_{\mathbb{C}}(X)|_E$.
Note that $f^{\ast}E= aE$ (as cycles) for some $a>0$, and $(f^{\ast}D)^{\dim(E)}\cdot E=\frac{\deg f}{a}D^{\dim(E)}\cdot E$.
Hence, $D\in S$ if and only if $f^{\ast}D\in S$. This implies that $S$ is $f^{\ast}$-invariant.
Replacing $f$ by a positive power, $L$ is $f^{\ast}$-invariant.
In particular, for any $P\in \N^{1}_{\mathbb{C}}(Y)$, $\pi^{\ast}P|_E=(f|_E)^{\ast}(\pi^{\ast}P'|_E)=(f^\ast\pi^\ast P')|_E$ for some $P'\in \N^{1}_{\mathbb{C}}(Y)$.
By Lemma \ref{ext} and since $f^{-1}(E)=E$, we have $f^{-1}(R_C)=R_{C'}$ for some curve $C' \subseteq E$ with $f(C') = C$.
Write $f_{\ast}(C')= eC$ where $e>0$.
Now we have $$\pi^{\ast}P\cdot C'=f^\ast\pi^\ast P'\cdot C'=f_\ast(f^\ast\pi^\ast P'\cdot C')=\pi^\ast P'\cdot eC=0.$$
Thus, $R_{C'}=R_C$ and hence $f(R_C)=R_C$.
The last assertion is true since the contraction $\pi$ is uniquely determined by the ray $R_C$.
\end{proof}

\begin{remark}\label{rmk-fano} In Lemma \ref{equi-mmp}, if $E = X$ i.e., if $\pi$ is a Fano contraction,
then $\pi$ is $f^s$-equivariant for some $s>0$.
This is also a corollary of \cite[Theorem 2.2]{Wi} by showing that $X$ has only finitely many Fano contractions.
\end{remark}


\begin{lemma}\label{equi-div} Let $f:X\to X$ be a polarized endomorphism of a $\Q$-factorial lc projective variety $X$.
Suppose that $\pi:X\to X_1$ is a divisorial contraction of a $K_X$-negative extremal ray $R_C := \mathbb{R}_{\ge0}[C]$ generated by some curve $C$. Then $f^s$ descends to a surjective endomorphism of $X_1$ for some $s>0$.
\end{lemma}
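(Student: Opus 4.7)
The plan is to identify the exceptional prime divisor $E = \Exc(\pi)$ (a single prime divisor since $X$ is $\mathbb{Q}$-factorial and $\pi$ contracts a single extremal ray), show that after replacing $f$ by a positive power one has $f^{-1}(E) = E$ as sets so that Lemma~\ref{equi-mmp} applies, and finally descend $f$ along $\pi$ via the universal property of the extremal contraction. Recall that $E = \overline{\bigcup_{C' \in \Sigma_C} C'}$, the closure of the locus swept out by curves with class in $R_C$.

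The crux will be verifying the hypothesis $f^{-i}f^i(E) = E$ of Lemma~\ref{finite-orbit} for all $i \ge 0$. For the base case $i = 1$, note that $f(E)$ is a prime divisor, and by Lemma~\ref{ext}(3) together with closedness of the finite morphism $f$ one has $f(E) = \overline{\bigcup_{C'' \in \Sigma_{f(C)}} C''}$, so $f(E)$ is densely covered by $\Sigma_{f(C)}$-curves. Suppose for contradiction that $D \neq E$ is another prime divisor with $f(D) = f(E)$. For a general point $p \in D$, the image $f(p)$ lies on some curve $C''_{f(p)} \in \Sigma_{f(C)}$; lifting to a component $C'_p$ of $f^{-1}(C''_{f(p)})$ passing through $p$, we have $f(C'_p) = C''_{f(p)} \in \Sigma_{f(C)}$, so Lemma~\ref{ext}(4) gives $C'_p \in \Sigma_C$, and hence $C'_p \subseteq E$ and $p \in E$. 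Generality of $p$ forces $D \subseteq E$, hence $D = E$, a contradiction. Applying the same argument to $f^i$ in place of $f$ (each $f^i$ is polarized and $\pi$ still contracts $R_C$) yields $(f^i)^{-1}(f^i(E)) = E$ for all $i \ge 0$.

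Lemma~\ref{finite-orbit} then makes $\{f^i(E)\}_{i \in \Z}$ finite; pigeonhole combined with the injectivity just proved produces $s > 0$ with $f^s(E) = E$. After replacing $f$ by $f^s$, one has $f(E) = E$ and $f^{-1}(E) = E$. Lemma~\ref{equi-mmp} now applies to the subvariety $E$ (which has $\dim \pi(E) < \dim E$); after a further positive power, $f(R_C) = R_C$, so $\pi \circ f$ contracts precisely the curves contracted by $\pi$. The universal property of the extremal contraction $\pi$ then yields a unique endomorphism $f_1 : X_1 \to X_1$ with $\pi \circ f = f_1 \circ \pi$, surjective because $f$ and $\pi$ are. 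The hardest part is the base-case argument above: it crucially combines the uniqueness of $E$ as the $\Sigma_C$-swept locus with Lemma~\ref{ext}(4), which characterizes preimages of $\Sigma_{f(C)}$-curves as exactly $\Sigma_C$-curves, together with the density of the $\Sigma_{f(C)}$-covering of $f(E)$.
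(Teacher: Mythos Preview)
Your proof is correct and follows exactly the paper's approach: show $E$ is an irreducible divisor, verify $f^{-i}f^i(E)=E$ for all $i\ge 0$ via Lemma~\ref{ext}, apply Lemma~\ref{finite-orbit} to get a finite orbit, replace $f$ by a power so that $f^{-1}(E)=E$, and finish with Lemma~\ref{equi-mmp}. Your contradiction argument for $f^{-1}f(E)=E$ is a valid (if slightly elaborate) unpacking of what the paper records in one line as ``By Lemma~\ref{ext}''; indeed, since $E=\bigcup_{C'\in\Sigma_C}C'$ (not merely its closure), Lemma~\ref{ext}(3)--(4) give $f^{-1}f(E)=\bigcup_{C''\in\Sigma_{f(C)}}f^{-1}(C'')\subseteq\bigcup_{C'\in\Sigma_C}C'=E$ directly, without needing the genericity or density considerations.
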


\begin{proof} Let $E$ be the exceptional divisor. Then $E$ is irreducible (cf.~\cite[Proposition 2.5]{KM}). By Lemma \ref{ext}, $f^{-i}f^i(E)=E$ for all $i\ge0$. By Lemma \ref{finite-orbit}, $M(E)$ is a finite set. So we may assume $f^{-1}(E)=E$ after replacing $f$ by its positive power. Then $\pi$ is $f^s$-equivariant for some $s>0$ by Lemma \ref{equi-mmp}.
\end{proof}

\begin{lemma}\label{equi-flip} Let $f:X\to X$ be a polarized endomorphism of a lc projective variety $X$.
Let $\sigma:X\dashrightarrow X^+$ be a flip with $\pi : X\to Y$ the corresponding flipping contraction of a $K_X$-negative extremal ray $R_C := \mathbb{R}_{\ge0}[C]$ generated by some curve $C$. Then the commutative diagram
$$ \xymatrix{
  X \ar@{.>}[rr] \ar[dr]_{\pi} &  &    X^+ \ar[dl]^{\pi^+}    \\
                & Y                 }$$
is $f^s$-equivariant for some $s>0$.
\end{lemma}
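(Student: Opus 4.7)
The plan is to reduce the equivariance of the full flip diagram to the equivariance of the flipping contraction $\pi$, for which I will use Lemmas~\ref{finite-orbit} and~\ref{equi-mmp} by the same strategy as in the divisorial case (Lemma~\ref{equi-div}), and then argue separately that the induced rational self-map on the flip $X^+$ is in fact regular.

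\textbf{Step 1 (Equivariance of $\pi$).} Let $E \subseteq X$ be the exceptional locus of $\pi$. Since $\pi$ is a flipping contraction of $R_C$, $E$ coincides with $\bigcup_{C' \in \Sigma_C} C'$, the union of all curves whose class lies in $R_C$. Using Lemma~\ref{ext}(3)--(4), a direct set-theoretic computation gives $f(E) = \bigcup_{C' \in \Sigma_{f(C)}} C'$ and $f^{-1}(f(E)) = \bigcup_{C'' \in \Sigma_C} C'' = E$; iterating, $f^{-i}(f^i(E)) = E$ for all $i \geq 0$. Lemma~\ref{finite-orbit} then makes $M(E) = \{f^i(E) : i \in \mathbb{Z}\}$ finite, so after replacing $f$ by a positive power we may assume $f(E) = E = f^{-1}(E)$. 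Since $\dim \pi(E) < \dim E$ (as $\pi$ contracts curves of $E$), Lemma~\ref{equi-mmp} applies (after a further power) and yields a surjective endomorphism $g \colon Y \to Y$ satisfying $g \circ \pi = \pi \circ f^s$.

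\textbf{Step 2 (Lifting $g$ to $X^+$).} Define the birational self-map $h := \sigma \circ f^s \circ \sigma^{-1} \colon X^+ \dashrightarrow X^+$. Letting $E^+ \subseteq X^+$ denote the exceptional locus of $\pi^+$, the isomorphism $\sigma \colon X \setminus E \xrightarrow{\sim} X^+ \setminus E^+$ shows $h$ restricts to a finite surjective endomorphism of $X^+ \setminus E^+$, and $\pi^+ \circ h = g \circ \pi^+$ as rational maps. The main obstacle is to extend $h$ across the codimension-$\geq 2$ locus $E^+$ to an everywhere-defined morphism $f^+ \colon X^+ \to X^+$, since a rational map between normal projective varieties generally fails to extend across codimension-two loci.

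\textbf{Step 3 (Extension via uniqueness of the flip).} To extend $h$, my plan is to exploit the universal property of $\pi^+ \colon X^+ \to Y$ as the unique $\mathbb{Q}$-factorial small birational model of $Y$ with $K_{X^+}$ being $\pi^+$-ample, isomorphic to $X$ in codimension one. Let $\Gamma$ be the normalization of the closure of the graph of $h$ in $X^+ \times X^+$, with projections $p_1$ (birational) and $p_2$. The identity $\pi^+ \circ p_2 = g \circ \pi^+ \circ p_1$ forces the $p_2$-image of every fibre of $p_1$ into a single fibre of $\pi^+$. The $\pi^+$-ampleness of $K_{X^+}$, together with a negativity-lemma argument applied to the Weil divisor $p_1^\ast K_{X^+} - p_2^\ast K_{X^+}$ on $\Gamma$, should then force $p_2$ to be constant on every positive-dimensional fibre of $p_1$; hence $p_1$ is an isomorphism and $f^+ := p_2 \circ p_1^{-1}$ is the desired morphism, satisfying $f^+ \circ \sigma = \sigma \circ f^s$ and $\pi^+ \circ f^+ = g \circ \pi^+$, completing the $f^s$-equivariance of the flip diagram.
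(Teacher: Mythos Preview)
Your Step~1 has a real gap: both Lemma~\ref{finite-orbit} and Lemma~\ref{equi-mmp} are stated for a closed \emph{subvariety}, i.e.\ an irreducible closed subset, and the exceptional locus $E$ of a flipping contraction need not be irreducible. The proof of Lemma~\ref{finite-orbit} uses the relation $f^{\ast}A_{i+1} = a_i A_i$ with a single integer $a_i$, and the proof of Lemma~\ref{equi-mmp} uses $f^{\ast}E = aE$ as cycles; both break down for a reducible $E$ whose components may be weighted differently by $f^{\ast}$. Moreover, even the preliminary reduction to components is not automatic: from $f^{-i}f^{i}(E)=E$ it does \emph{not} follow that $f^{-i}f^{i}(E_k)=E_k$ for each irreducible component $E_k$, since two components could have the same image under $f$. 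The paper handles this by first choosing $i_0$ so that $f^{i_0}(E), f^{i_0+1}(E),\dots$ all have the same number of irreducible components; only then does $f$ induce a bijection on components, giving $f^{-i}f^{i}(E'(k))=E'(k)$ for each component $E'(k)$ of $E':=f^{i_0}(E)$, and Lemma~\ref{finite-orbit} can be applied to a single $E'(k)$. After some bookkeeping one obtains an irreducible component $E(k)$ of $E$ with $f^{-t}(E(k))=E(k)$, to which Lemma~\ref{equi-mmp} legitimately applies.

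Your Steps~2--3 are essentially a sketch of Lemma~\ref{nak}, which the paper simply quotes. Your outline is in the right spirit, but the phrase ``negativity-lemma argument applied to $p_1^{\ast}K_{X^+}-p_2^{\ast}K_{X^+}$'' is not quite right as stated: $p_2$ is generically of degree $\deg f^s$, not birational, so the usual negativity lemma does not apply to it directly. The cleaner route (as in \cite[Lemma~3.6]{Zh-comp}) is to use that $X^+ = \Proj_Y \bigoplus_m \pi_{\ast}\mathcal{O}_X(mK_X)$ and that the $f^s$-equivariance of $\pi$ makes this graded algebra $g$-equivariant, so the induced self-map of $X^+$ is automatically a morphism.
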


\begin{proof} Let $E$ be the exceptional locus of $\pi$.
By Lemma \ref{ext}, $f^{-i}f^i(E) = E$ for all $i \ge 0$.
Choose $i_0 \ge 0$ such that $E':= f^{i_0}(E), f(E'), f^2(E'), \cdots$
all have the same number of irreducible components.
Then $f^{-i}f^i(E'(k)) = E'(k)$ for every irreducible component $E'(k)$ of $E'$.
By Lemma \ref{finite-orbit}, $M(E'(k))$ is a finite set.
Then $f^r(E'(k))=f^s(E'(k))$ for some $r>s\ge0$ and hence $f^{-i_0}(E'(k))=f^{-i_0}(f^{-r}f^{r}(E'(k)))=f^{-i_0}(f^{-r}f^{s}(E'(k)))=f^{-(r-s)}(f^{-i_0}(E'(k)))$.
So $f^{-(r-s)}$ permutes the irreducible components of $f^{-i_0}(E'(k))$.
Let $E(k)$ be an irreducible component of $E$ such that $f^{i_0}(E(k))=E'(k)$.
Then $f^{-t}(E(k))=E(k)$ for some $t>0$.
Since $\dim(\pi(E(k)))<\dim(E(k))$, we have $f^s(R_C)=R_C$ for some $s>0$ by applying Lemma \ref{equi-mmp} to $f^t$ and $E(k)$.
Hence, the rational maps on $Y$ and $X^{+}$ induced from $f^s$ are well-defined morphisms by the following Lemma \ref{nak}.
\end{proof}

The following lemma is true by using the same proof of \cite[Lemma 3.6]{Zh-comp} since log canonical flips are now known to exist (cf.~\cite[Corollary 1.2]{Bi}).
\begin{lemma}\label{nak}(cf.~\cite[Lemma 3.6]{Zh-comp}) Let $X$ be a  normal projective variety with at worst lc singularities, $f : X \to X$ a surjective endomorphism, and $X\dashrightarrow X^+$ a flip with $\pi : X \to Y$ the corresponding flipping contraction of a $K_X$-negative extremal ray $R_C := \mathbb{R}_{\ge0}[C]$ generated by some curve $C$.
Suppose that $R_{f(C)} = R_C$.
Then the dominant rational map $f^+ : X^+ \dashrightarrow X^+$ induced from $f$, is holomorphic.
Both $f$ and $f^+$ descend to one and the same endomorphism of $Y$.
\end{lemma}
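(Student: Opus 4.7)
The plan is to split the proof into two parts: first, descend $f$ to an endomorphism $g:Y\to Y$; second, show the rational self-map $f^+:X^+\dashrightarrow X^+$ induced by $f$ is holomorphic and that it descends to the same $g$.

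For the first part, I would observe that the hypothesis $R_{f(C)}=R_C$ together with Lemma \ref{ext}(3)--(4) gives $f(\Sigma_C)=\Sigma_C=f^{-1}(\Sigma_C)$, where $\Sigma_C$ is the set of curves with classes in $R_C$. Since $\pi$ is characterized as the unique morphism with connected fibres and $\pi_\ast\OO_X=\OO_Y$ that contracts precisely the curves in $\Sigma_C$, the composition $\pi\circ f$ likewise contracts every such curve; by the universal property of extremal contractions (i.e., rigidity), a unique surjective morphism $g:Y\to Y$ with $g\circ\pi=\pi\circ f$ then exists.

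For the second part, set $E=\Exc(\pi)$ and $E^+=\Exc(\pi^+)$; both have codimension $\geq 2$ since $\pi^+$ is small (flips being small modifications). The flip $\phi:X\dashrightarrow X^+$ gives an isomorphism $X\setminus E\cong X^+\setminus E^+$, and using $f^{-1}(E)=E$, the composite $f^+:=\phi\circ f\circ \phi^{-1}$ is a morphism on $X^+\setminus E^+$ satisfying $\pi^+\circ f^+=g\circ\pi^+$ there. To extend $f^+$ globally, I would invoke the characterization of $X^+$ as the relative canonical model of $X\to Y$: for $m\gg 0$ sufficiently divisible, $X^+=\Proj_Y\bigoplus_k\pi_\ast\OO_X(kmK_X)$, equivalently $K_{X^+}$ is $\pi^+$-ample (granted by existence of lc flips, \cite[Corollary 1.2]{Bi}). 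Normalizing the graph of $f^+$ gives $W$ with birational projections $p_1,p_2:W\to X^+$ satisfying $\pi^+\circ p_2=g\circ\pi^+\circ p_1$. If $p_1$ were not an isomorphism, Zariski's main theorem would furnish a curve $\gamma\subseteq W$ contracted by $p_1$; then $p_2(\gamma)$ would lie in a single fibre of $\pi^+$ and, if non-constant, would satisfy $K_{X^+}\cdot p_2(\gamma)>0$. A careful comparison of $p_1^\ast K_{X^+}$ and $p_2^\ast K_{X^+}$ on $W$ (which agree up to divisors supported on $p_i$-exceptional loci, since $\phi$ is an isomorphism in codimension one) should yield a contradiction, forcing $p_1$ to be an isomorphism and $f^+$ to be a morphism. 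The relation $\pi^+\circ f^+=g\circ\pi^+$ then extends from $X^+\setminus E^+$ to all of $X^+$ by continuity.

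The main obstacle is the last comparison on $W$: rigorously tracking the divisorial difference between $p_1^\ast K_{X^+}$ and $p_2^\ast K_{X^+}$ and combining it with $\pi^+$-ampleness to rule out the contracted curve $\gamma$. An attractive alternative, which is the route taken in \cite[Lemma 3.6]{Zh-comp}, is to construct $f^+$ directly from a morphism of graded $\OO_Y$-algebras between $\bigoplus_k\pi_\ast\OO_X(kmK_X)$ and $g_\ast$ of itself, induced by $f$ and $g$, thereby by-passing the graph analysis entirely and exploiting only that $\pi$ and $\pi^+$ agree in codimension one together with the $\Proj_Y$ description of $X^+$.
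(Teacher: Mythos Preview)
The paper gives no argument beyond citing \cite[Lemma~3.6]{Zh-comp} and noting that the proof there goes through verbatim once lc flips are available (\cite[Corollary~1.2]{Bi}). Your Part~1 is correct and is exactly how one begins: Lemma~\ref{ext}(3)--(4) yields $f(\Sigma_C)=\Sigma_C=f^{-1}(\Sigma_C)$, and then the contraction theorem produces $g:Y\to Y$ with $g\circ\pi=\pi\circ f$. Your last paragraph already identifies the route the paper intends, namely the $\Proj_Y$ description of $X^+$ and the morphism of graded $\OO_Y$-algebras induced by $f$ and $g$; this is precisely the method of \cite[Lemma~3.6]{Zh-comp}.

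Your primary route through the normalized graph, however, contains a real error in the parenthetical. You claim $p_1^\ast K_{X^+}$ and $p_2^\ast K_{X^+}$ agree up to divisors supported on the $p_i$-exceptional loci ``since $\phi$ is an isomorphism in codimension one''. But $\phi$ being small is irrelevant here: $f$ (hence $f^+$) is \emph{not} birational, so $p_2$ is generically finite of degree $\deg f>1$. Writing $K_W=p_1^\ast K_{X^+}+E_1$ with $E_1$ $p_1$-exceptional, and $K_W=p_2^\ast K_{X^+}+R_{p_2}$ with $R_{p_2}\ge 0$ the ramification divisor of $p_2$, one finds $p_1^\ast K_{X^+}-p_2^\ast K_{X^+}=R_{p_2}-E_1$, and $R_{p_2}$ is not exceptional: it dominates, via $p_1$, the (closure of the) ramification divisor of $f^+$ on $X^+\setminus E^+$, which in turn corresponds to $R_f$ on $X\setminus E$. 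One could still try to push a sign argument through on a $p_1$-contracted curve $\gamma$, but this requires controlling $E_1\cdot\gamma$, whose sign is not a priori favorable; it is not the one-line comparison you suggest. The $\Proj$ argument avoids all of this and is the intended proof.
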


\begin{definition}\label{move-def} (cf.~\cite{ZDA})
Let $X$ be a normal projective variety and $D$ an $\mathbb{R}$-Cartier divisor.
We say $D$ is \emph{movable} if: for any $\epsilon>0$, any ample divisor $H$ and any prime divisor $\Gamma$,
there is an effective $\mathbb{R}$-Cartier divisor $\Delta$ such that $\Delta\equiv D+\epsilon H$ and $\Gamma\not\subseteq \Supp\Delta$. 
\end{definition}

\begin{lemma}\label{kmovelem}Let $f:X\to X$ be a polarized endomorphism of a $\mathbb{Q}$-factorial klt projective variety $X$.
Suppose that $K_X$ is movable. Then $X$ is $Q$-abelian.
\end{lemma}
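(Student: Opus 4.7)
The strategy is to reduce the statement to Lemma \ref{struclem}, which already handles the case of polarized endomorphisms on non-uniruled normal projective varieties, by showing that the movability hypothesis forces $X$ to be non-uniruled.

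First, I would verify that a movable divisor is pseudo-effective. Fix any ample divisor $H$ and any prime divisor $\Gamma$ on $X$. By the definition of movability, for each $\epsilon > 0$ there is an effective $\R$-Cartier divisor $\Delta_\epsilon \equiv K_X + \epsilon H$; hence the class $[K_X + \epsilon H]$ lies in the effective cone, which sits inside $\PEC(X)$. Letting $\epsilon \to 0$ and using that $\PEC(X)$ is closed by definition, we obtain $[K_X] \in \PEC(X)$, and via the injection $\N^1(X) \hookrightarrow \N_{n-1}(X)$ of Lemma \ref{nrn1}, also $[K_X] \in \PE(X)$. So $K_X$ is pseudo-effective.

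Second, I would appeal to the theorem of Boucksom-Demailly-P\u{a}un-Peternell, extended to the $\Q$-factorial klt setting: pseudo-effectivity of $K_X$ on such an $X$ is equivalent to $X$ being non-uniruled. In the singular setting one way to see this is: $K_X$ pseudo-effective implies, via BCHM, that $X$ admits a minimal model $X^{\min}$ with $K_{X^{\min}}$ nef; a variety with nef canonical class is not uniruled, and since uniruledness is a birational invariant, $X$ itself is non-uniruled.

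Finally, $X$ is now a non-uniruled normal projective variety equipped with the polarized endomorphism $f$, so Lemma \ref{struclem} applies directly and yields that $X$ is $Q$-abelian (in fact with only canonical singularities and $K_X \sim_\Q 0$), which is exactly the desired conclusion. The only delicate ingredient is citing the correct singular version of BDPP; once pseudo-effectivity has been upgraded to non-uniruledness, all remaining work has already been done in Lemma \ref{struclem}. In particular, the full strength of movability is not essential for this lemma, since pseudo-effectivity of $K_X$ alone suffices.
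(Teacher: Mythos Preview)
Your reduction has a genuine gap at step~2. The assertion that ``via BCHM, $X$ admits a minimal model $X^{\min}$ with $K_{X^{\min}}$ nef'' is not what \cite{BCHM} proves: BCHM gives termination of the MMP only when the boundary (or $K_X+\Delta$) is big, not for an arbitrary klt $X$ with $K_X$ merely pseudo-effective; existence of a minimal model in that generality is exactly termination of flips, which is open. Without this, your deduction that $X$ is non-uniruled is unsupported. The BDPP route also does not transfer cleanly: on a resolution $\pi\colon X'\to X$ one has $K_{X'}=\pi^{*}K_X+\sum a_iE_i$, and for klt (as opposed to canonical) singularities some $a_i<0$, so pseudo-effectivity of $\pi^{*}K_X$ does not force pseudo-effectivity of $K_{X'}$; the free-rational-curve argument likewise fails to control the sign of $\sum a_i(E_i\cdot C')$. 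Your closing remark that ``pseudo-effectivity of $K_X$ alone suffices'' is precisely the content of Lemma~\ref{pecase}, which in the paper's logic is \emph{deduced from} Lemma~\ref{kmovelem} via an MMP argument reducing to the movable case---so this is not a shortcut but a reversal of the actual dependencies.

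The paper's own argument is entirely different and uses movability in an essential way, bypassing non-uniruledness altogether. From the ramification divisor formula $K_X=f^{*}K_X+R_f$ and $f^{*}H\sim qH$, intersecting with $(f^{*}H)^{n-1}$ gives $(q-1)K_X\cdot H^{n-1}+R_f\cdot H^{n-1}=0$; since both terms are non-negative ($K_X$ is pseudo-effective because it is movable, and $R_f\ge 0$), one gets $K_X\cdot H^{n-1}=0$. Movability then supplies condition~(1) of Lemma~\ref{nzlem} (namely $K_X\cdot G\cdot L_1\cdots L_{n-2}\ge 0$ for effective $G$ and nef $L_i$), and that lemma yields $K_X\equiv 0$, whence $K_X\sim_{\Q}0$ by \cite[Chapter~V, Corollary~4.9]{ZDA}. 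Finally Lemma~\ref{struccor} (not Lemma~\ref{struclem}) is invoked, which only needs klt and $K_X\sim_{\Q}0$, to conclude that $X$ is $Q$-abelian.
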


\begin{proof} First we claim that $K_X\sim_{\mathbb{Q}} 0$.
Since $K_X$ is movable, $K_X$ is pseudo-effective and satisfies the first condition of Lemma \ref{nzlem}.
Suppose that $f^\ast H\sim_{\mathbb{Q}} qH$ for some ample divisor $H$ of $X$ and $q>1$.
Taking intersection numbers with $(f^{\ast}H)^{n-1}=f^{\ast}H\cdots f^{\ast}H$ of the both sides of the ramification divisor formula $K_X=f^{\ast}K_X+R_f$, we obtain $$(q-1)K_X\cdot H^{n-1}+R_f\cdot H^{n-1}=0.$$
Since $K_X$ and $R_f$ are pseudo-effective, $K_X\cdot H^{n-1}=0$.
So by Lemma \ref{nzlem}, $K_X\equiv 0$ and hence $K_X\sim_{\mathbb{Q}} 0$ by \cite[Chapter V, Corollary 4.9]{ZDA}.

Now the lemma follows from Lemma \ref{struccor}.
\end{proof}

\begin{lemma}\label{pecase} Let $f:X\to X$ be a polarized endomorphism of a $\mathbb{Q}$-factorial klt projective variety $X$.
Assume that $K_X$ is pseudo-effective. Then $X$ is $Q$-abelian.
\end{lemma}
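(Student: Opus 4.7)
My plan is to upgrade the hypothesis ``$K_X$ pseudo-effective'' to the stronger statement ``$K_X$ movable'' and then invoke Lemma \ref{kmovelem}, which already handles the movable case. The whole argument rests on a single numerical identity extracted from the ramification divisor formula and the polarization.

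Set $n:=\dim X$ and fix an ample Cartier divisor $H$ with $f^{\ast}H\sim qH$ for some integer $q>1$. Intersecting the ramification divisor formula $K_X=f^{\ast}K_X+R_f$ with $H^{n-1}$ and using the projection formula together with $(f^{\ast}H)^{n-1}\equiv q^{n-1}H^{n-1}$ and $\deg f=q^n$ (Lemma \ref{basic-deg}), one gets $H^{n-1}\cdot f^{\ast}K_X=q\,(H^{n-1}\cdot K_X)$, whence
$$(1-q)\,(H^{n-1}\cdot K_X)=H^{n-1}\cdot R_f\ge 0.$$
Since $q>1$, this forces $H^{n-1}\cdot K_X\le 0$. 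On the other hand, $K_X$ is pseudo-effective, i.e.~a numerical limit of effective $\mathbb{R}$-Cartier divisors $D_k$, each of which satisfies $D_k\cdot H^{n-1}\ge 0$ (ample intersected with effective), so $H^{n-1}\cdot K_X\ge 0$. Combining the two inequalities, $H^{n-1}\cdot K_X=0$.

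Next I invoke Nakayama's divisorial Zariski decomposition (\cite{ZDA}): write $K_X=P+N$ with $P$ movable and $N=\sum a_iD_i$ an effective $\mathbb{R}$-divisor ($a_i>0$, $D_i$ distinct prime). Since $P$ is a numerical limit of effective divisors whose supports avoid any fixed prime divisor, $H^{n-1}\cdot P\ge 0$; and since $H$ is ample, $H^{n-1}\cdot D_i=(H|_{D_i})^{n-1}>0$ for each $i$. Plugging these into
$$0=H^{n-1}\cdot K_X=H^{n-1}\cdot P+\sum_i a_i(H|_{D_i})^{n-1}$$
forces every $a_i=0$, so $N=0$ and $K_X=P$ is movable. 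Lemma \ref{kmovelem} now applies and shows that $X$ is $Q$-abelian.

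The main obstacle is the middle step: invoking the divisorial Zariski decomposition on a $\mathbb{Q}$-factorial klt variety and verifying that the movable part pairs non-negatively with $H^{n-1}$. These are standard in Nakayama's setup but must be cited with care. The remaining ingredients are a one-line projection-formula computation and a direct appeal to Lemma \ref{kmovelem}.
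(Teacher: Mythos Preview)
Your argument is correct and takes a genuinely different route from the paper. The paper does \emph{not} show directly that $K_X$ is movable. Instead it runs an $f$-equivariant MMP (using Lemmas \ref{equi-div}, \ref{equi-flip} and Theorem \ref{desends-polar}) to reach a birational model $Y$ on which every further MMP step is a flip; then, invoking \cite{BCHM}, it runs $(K_Y+t_jA)$-MMP with scaling for $t_j\to 0$ to conclude that $K_Y$ is movable, applies Lemma \ref{kmovelem} to $Y$, and finally argues via Lemma \ref{mor-q-abelian} and Zariski's main theorem that $X\cong Y$. Your approach bypasses all of this: the identity $K_X\cdot H^{n-1}=0$ together with the divisorial Zariski decomposition $K_X=P_\sigma+N_\sigma$ immediately kills $N_\sigma$ and shows $K_X$ itself is movable, so Lemma \ref{kmovelem} applies on $X$ directly.

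What each approach buys: yours is shorter and avoids both the equivariant MMP lemmas and the appeal to \cite{BCHM}; in exchange you must invoke the $\sigma$-decomposition on a $\mathbb{Q}$-factorial normal variety, whereas \cite{ZDA} formally develops it only in the smooth case. The extension is routine (the definition of $\sigma_\Gamma$, the finiteness of components in $N_\sigma$, and the membership $P_\sigma\in\overline{\mathrm{Mov}}(X)$ all go through once Weil and Cartier coincide up to $\mathbb{Q}$-coefficients), but you should point to a reference that states it in this generality rather than to \cite{ZDA} alone. The paper's route, by contrast, stays within tools it has already set up for Theorem \ref{scalarthm} and cites only \cite{BCHM} beyond that. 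Note also a small redundancy: your computation $K_X\cdot H^{n-1}=0$ is re-derived inside the proof of Lemma \ref{kmovelem}, so once you know $K_X$ is movable you could equally well jump straight to Lemma \ref{nzlem} and \cite[Chapter V, Corollary 4.9]{ZDA}.
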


\begin{proof} We run MMP on $X$. Since $K_X$ is pseudo-effective, we will never arrive at a non-birational contraction.
By running finitely many steps, we get a birational map $\pi:X\dashrightarrow Y$ such that any MMP starting from $Y$ will always be a sequence of flips.
Replacing $f$ by a positive power, $f$ descends step by step by Lemmas \ref{equi-div}, \ref{equi-flip}, and Theorem \ref{desends-polar}.
Let $g=f|_Y$ and $g^\ast H\sim_{\mathbb{Q}}qH$ for some ample Cartier divisor $H$ of $Y$ and $q>1$.

We claim that $K_Y$ is movable.
Take an ample divisor $A$ of $Y$ and a sequence of positive numbers $t_j$ approaching $0$.
By \cite{BCHM}, we can run $(K_Y+t_jA)$-MMP on $Y$ with scaling of $A$ to obtain a birational map $$\pi_j: (Y,t_jA)\dashrightarrow (Y_j, t_jA_j),$$
such that $K_{Y_j}+t_jA_j$ is nef and hence movable.
Since $Y$ and $Y_j$ are isomorphic in codimension $1$ and they are both $\mathbb{Q}$-factorial,
we have a natural isomorphism $\pi_j^{\ast}: \N^1(Y_j)\to \N^1(Y)$ and $D\in \N^1(Y_j)$ is movable if and only if $\pi_j^\ast D$ is movable.
So $K_Y+t_jA=\pi_j^\ast (K_{Y_j}+t_jA_j)$ is movable for each $j$.
In particular, $K_Y$ is movable.

Note that $Y$ is $Q$-abelian by Lemma \ref{kmovelem} and $g$ is polarized. Then $X\to Y$ is a birational equi-dimensional morphism by Lemma \ref{mor-q-abelian}.
This is possible only when $X\to Y$ is an isomorphism by the Zariski's main theorem (cf.~\cite[Chapter V, Theorem 5.2]{Har}).
\end{proof}

\section{Examples of polarized endomorphisms}

In the example below, $f:Z\to Z$ is polarized, but, $(f^i)^*|_{\N^1(Z)}$ is not a scalar multiplication for any $i>0$.

\begin{example}\label{ex1} \rm{}Let $E$ be an elliptic curve with no complex multiplication and $Z=E\times E$ an abelian surface with an endomorphism $f$ corresponding to the matrix $\begin{pmatrix} 1 & -5 \\ 1 & 1 \end{pmatrix}$.
Then $\rho(Z)=3$ and $(f^i)^*|_{\N^1(Z)}$ is not a scalar multiplication for any $i>0$; see \cite[Example 4.1.5]{ENS}.
In particular, $f^\ast|_{\N^1(Z)}$ has only one real eigenvalue (counting multiplicities) and the spectral radius of $f^\ast|_{\N^1(Z)}$ is $6$;
further $f^\ast H\equiv 6H$ for some nef divisor $H\not\equiv 0$ by applying the Perron-Frobenius theorem to $\Nef(Z)$.
We claim that $H^2>0$ and hence $H$ is ample; see Remark \ref{rmk-abelian-ample}.
If $H^2=0$, then $H^{\perp}:=\{D\in \N^1(Z)\,|\,D\cdot H=0\}$ is a $f^\ast$-invariant $2$-dimensional subspace.
Note that $H\in H^{\perp}$.
So $f^{\ast}|_{\N^1(Z)}$ has two real eigenvalues, counting multiplicities.
This is a contradiction.
Hence $f$ is polarized.
\end{example}

Next we show the existence of a polarized endomorphism $g:S\to S$, such that $(g^i)^\ast|_{\N^1(S)}$ is not a scalar multiplication for any $i>0$ while
$(g|_{\Alb(S)})^\ast|_{\N^1(\Alb(S))}$ is a scalar multiplication.

\begin{example}\label{ex2} \rm{} We use the notation in Example \ref{ex1}.
Let $G$ be a group generated by $\diag[-1,-1]$ and denote by $S=Z/G$ which is a normal K3 surface.
Since $q(S)=0$ and $S$ has rational singularities, the Albanese map is trivial by Lemma \ref{alb-morphism}.
Note that $f$ is $G$-equivariant, and $\pi:Z\to S$ is a finite surjective morphism.
So $g=f|_S$ is also polarized by Theorem \ref{desends-polar}.
Next we claim that $(g^i)^\ast|_{\N^1(S)}$ is not a scalar multiplication for any $i>0$.
Clearly, it suffices to show that $\rho(S)\ge 2$ (then it follows that $\rho(S)=3$ since $f^\ast|_{\N^1(Z)}$ has only one real eigenvalue, counting multiplicities).
Suppose $\rho(S)=1$.
A fibre $E_0$ of $Z\to E$ has $E_0^2=0$.
Since $E_0$ is $G$-invariant, $\pi^\ast\pi(E_0)\equiv aE_0$ for some $a>0$.
Then $0=a^2E_0^2=(\pi^\ast\pi(E_0))^2=4\pi(E_0)^2$ and hence $\pi(E_0)$ is not ample, a contradiction.
\end{example}

\begin{example}\label{ex-xu} \rm{}We construct polarized endomorphisms $f:X\to X$ such that:
\begin{itemize}
\item[(1)]
$\dim (X) = m+n-1$ with $m\in\{4,6\}$ and $0<n<m$,
\item[(2)]
$X$ has $\mathbb{Q}$-factorial terminal (quotient) singularities and is rationally connected,
\item[(3)]
the smooth locus $X_{\reg}$ of $X$ has infinite algebraic fundamental group $\pi_1^{\alg}(X_{\reg})$,
\item[(4)]
$q^{\natural}(X) = n > 0$ (see Definition \ref{def-q}),
\item[(5)]
the Iitaka $D$-dimension satisfies $\kappa(X, -K_X) = m-1$, and
\item[(6)]
the ramification divisor $R_f \subseteq X$ of $f$ is non-trivial.
\end{itemize}

Indeed, let $G \cong \mathbb{Z}/(m)$
act on $\mathbb{P}^{m-1}$ as a (coordinates) permutation subgroup of $S_m$
so that $G$ has no non-trivial pseudo-reflections (i.e.~for any non-trivial $g\in G$, $g$ fixes at most a codimension $2$ subset), and $\mathbb{P}^{m-1}/G$ has only canonical
singularities. This is guaranteed if the age $a(h) \ge 1$ at every point
fixed by a non-trivial $h$ in $G$, e.g. if $m=4, 6$; see the proof of \cite[Lemma 3]{KX}.
Let $G = \langle g = \exp(\frac{2 \pi \sqrt{-1}}{m}) \rangle$ act diagonally on the abelian variety
$A = E^n = E \times \cdots \times E$, with $E$ being an elliptic curve
such that $G$ has no non-trivial pseudo-reflection (and hence $A/G$ is $Q$-abelian) and $A/G$ is
rationally connected. This is achievable by letting $E = \mathbb{C}/(\mathbb{Z} + \mathbb{Z }\zeta_m)$
and choosing suitable $m > n>0$ (e.g., $m=4, 6$ and $0<n<m$); see \cite{COV} and \cite[Corollary 25]{KL}.
Let $G$ act diagonally on $W = \mathbb{P}^{m-1} \times A$. Then
$X = W/G$  projects to rationally connected $A/G$ with the general fibre $\mathbb{P}^{m-1}$ and hence it is also rationally connected
by \cite[Corollary 1.3]{GHS}.  For any non-trivial $g\in G$, $g|_A$ contributes a positive value to the age $a(g)$ and hence $a(g)>1$. So $X$ is $\mathbb{Q}$-factorial terminal.
Now the multiplication map $\mu_r: A \to A$, $a \mapsto ra$, with $r \ge 2$, is polarized
such that $\mu_r^*H = r^2H$ for any symmetric ample divisor $H$ on $A$.
The power map $q_P : \mathbb{P}^{m-1} \to \mathbb{P}^{m-1}, [X_0: \cdots : X_{m-1}] \mapsto [X_0^q:\cdots : X_{m-1}^q]$
with $q = r^2$, is also polarized. Thus $f_W = (q_P, \mu_r)$ is a polarized endomorphism of $W$
and it descends to a polarized endomorphism $f$ on $X$ ($f_W$ commutes with the $G$-action).
Since $G$ also has no non-trivial pseudo-reflections on $W$, the quotient map $\gamma: W \to X$ is quasi-\'etale, $K_W = \gamma^\ast K_X$, and $\kappa(X, -K_X) = \kappa(W, -K_W) = m-1$.
Hence the topological fundamental group $\pi_1(X_{\reg})$ of the smooth locus of $X$
is the extension of $\mathbb{Z}/(m)$ by $\mathbb{Z}^{\oplus 2\dim (A)}$, and $q^{\natural}(X) = \dim (A) > 0$.
For (6), we take $D_i'$ as the pullback to $W$ of the coordinate hyperplane $\{X_i = 0\} \subseteq \mathbb{P}^{m-1}$.
Then $f_W^*D_i' = qD_i'$. Now  $R_f \ge (q-1)D_i$ with $D_i \subseteq X$ the image of $D_i'$.

\end{example}

\section{Proof of Theorem \ref{scalarthm}}

\begin{proof}[Proof of Theorem \ref{scalarthm}] If $K_X$ is pseudo-effective, then (1) follows from Lemma \ref{pecase} and (3) and (4) are trivial.
Next we consider the case where $K_X$ is not pseudo-effective.

By \cite[Corollary 1.3.3]{BCHM}, since $K_X$ is not pseudo-effective, we may run MMP with scaling for a finitely many steps: $X=X_1\dashrightarrow\cdots\dashrightarrow X_j$ (divisorial contractions and flips) and end up with a Mori's fibre space $X_j\to X_{j+1}$.
Note that $X_{j+1}$ is again $\mathbb{Q}$-factorial (cf.~\cite[Corollary 3.18]{KM} with klt singularities (cf.~\cite[Corollary 4.5]{Fu}).
So by running the same program several times, we may get the following sequence:  $$(\ast)\,\,X=X_1\dashrightarrow \cdots \dashrightarrow X_i \dashrightarrow \cdots \dashrightarrow X_r=Y,$$ such that $K_{X_{r}}$ is pseudo-effective.
Replacing $f$ by a positive power, suppose $f=f_1$ descends to a polarized endomorphism $f_{i-1}:X_{i-1}\to X_{i-1}$ via the above sequence.
By  Lemmas \ref{equi-div}, \ref{equi-flip} and Remark \ref{rmk-fano},
one can further descends $f_{i-1}$ to a surjective endomorphism $f_{i}:X_i\to X_i$ after replacing $f_{i-1}$ by a positive power (i.e. replacing $f$ by a positive power).
By Theorem \ref{desends-polar}, $f_i$ is again polarized by some ample Cartier divisor $H_i$.
So the sequence $(\ast)$ is $f^s$-equivariant for some $s>0$.
Since $K_{X_{r}}$ is pseudo-effective, $Y=X_r$ is $Q$-abelian by Lemma \ref{pecase}.

By Lemma \ref{mor-q-abelian}, the composition $X_i\dashrightarrow Y$ is a morphism for each $i$.
If $X_i\dashrightarrow X_{i+1}$ is a flip, then for the corresponding flipping contraction $X_i\to Z_i$, $(Z_i,\Delta_i)$ is klt for some effective $\mathbb{Q}$-divisor $\Delta_i$ by \cite[Corollary 4.5]{Fu}.
Hence $Z_i\dashrightarrow Y$ is also a morphism by Lemma \ref{mor-q-abelian} again.
Together, the sequence $(\ast)$ is a relative MMP over $Y$.

By Lemmas \ref{mmp-rc} and \ref{fibres-rc+irr}, $X_i\to Y$ is equi-dimensional with every fibre being (irreducible) rationally connected.
Note that $K_{X_i}$ is not pseudo-effective for any $i<r$ by (1).
Then the final map $X_{r-1}\to X_r$ is a Fano contraction.
So (2) is proved.

Via the pullback, $\N^1(X_{i+1})$ can be regarded as a subspace of $\N^1(X_i)$ and hence a subspace of $\N^1(X)$.
Then $f_i^\ast|_{\N^1(X_i)}=f^\ast|_{\N^1(X_i)}$. If $X_i\dashrightarrow X_{i+1}$ is a flip, then $\N^1(X_i)=\N^1(X_{i+1})$.
If $X_i\to X_{i+1}$ is a divisorial contraction or a Fano contraction, then $\N^1(X_{i+1})$ is a codimension one subspace of $\N^1(X_i)$ by the cone theorem and $H_i\not\in \N^1(X_{i+1})$.
So $\N^1(X_i)$ is spanned by $\N^1(X_{i+1})$ and $H_i$. Together, $\N^1(X)$ is spanned by $\N^1(Y)$ and those $\{H_i\}_{i<r}$. Clearly, if $i<r$, then $\dim(X_i)>0$.
By Proposition \ref{cone} and Lemmas \ref{basic-deg} and \ref{basic-deg2}, the eigenvalue of $H_i$ is the same $q=(\deg f_i)^{1/\dim(X_i)}$. So (3) is proved.

(4) is straightforward from (3).
\end{proof}

\section{Proof of Theorem \ref{thm-smooth-rc} and Corollary \ref{main-cor-k}}

\begin{lemma}\label{smooth-scalar} Let $f:X\to X$ be a polarized endomorphism of a $\Q$-factorial klt projective variety $X$.
Suppose either one of the following conditions (which are not automatic even for rationally connected terminal $X$; see Example \ref{ex-xu}).
\begin{itemize}
\item[(i)] $q^\natural(X)=0$ (see Definition \ref{def-q}).
\item[(ii)] The smooth locus $X_{\reg}$ of $X$ has finite algebraic fundamental group $\pi_1^{\alg}(X_{\reg})$.
\end{itemize}
Then $X$ is rationally connected and $(f^s)^\ast|_{\N^1(X)}$ is a scalar multiplication for some $s>0$.
\end{lemma}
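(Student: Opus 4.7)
The plan is to invoke Theorem~\ref{scalarthm} to obtain, after replacing $f$ by a positive power, an $f$-equivariant relative MMP $X = X_1 \dashrightarrow \cdots \dashrightarrow X_r = Y$ with $Y$ a $Q$-abelian variety, and then to show that $\dim Y = 0$ under either hypothesis. Once that is done, Theorem~\ref{scalarthm}(3) (applied with $\N^1(Y) = 0$) yields that $\N^1(X)$ is spanned by the ample divisor classes $H_i$, all of which are $f^\ast$-eigenvectors with the common eigenvalue $q$, so $f^\ast|_{\N^1(X)} = q\,\id$; meanwhile Lemma~\ref{mmp-rc} shows that $X$, being the general fibre of the composite map over the point $Y$, is rationally connected.

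Under (i), the composite $X \dashrightarrow Y$ is a morphism by Theorem~\ref{scalarthm}(2), equi-dimensional with rationally connected fibres. Let $\tau : A \to Y$ be the finite quasi-\'etale cover with $A$ an abelian variety of dimension $\dim Y$. Since the branch locus of $\tau$ lies in codimension $\ge 2$ in $Y$ and equi-dimensionality of $X \to Y$ preserves codimensions, the base change $X \times_Y A \to X$ is \'etale in codimension one. Taking the normalisation and choosing an irreducible component $W$ that dominates $A$ (which exists because $X \times_Y A \to A$ is surjective), we obtain a finite quasi-\'etale morphism $W \to X$ with $W \to A$ still surjective. Passing to a smooth projective model of $W$, the universal property of the Albanese variety gives a surjection $\Alb(W) \twoheadrightarrow A$, so $\tilde{q}(W) \ge \dim A = \dim Y$. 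Combining with $\tilde{q}(W) \le q^{\natural}(X) = 0$ forces $\dim Y = 0$.

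Under (ii), the plan is to reduce to (i) by proving $q^{\natural}(X) = 0$. Finiteness of $\pi_1^{\alg}(X_{\reg})$ together with the Galois correspondence between connected quasi-\'etale covers of $X$ and finite-index subgroups of $\pi_1^{\alg}(X_{\reg})$ shows that for each such cover $X' \to X$ the group $\pi_1^{\alg}(X'_{\reg})$ is itself finite. For any smooth projective model $Z$ of $X'$, the resolution $Z \to X'$ is an isomorphism over the smooth locus, so the open embedding $X'_{\reg} \hookrightarrow Z$ induces a surjection of algebraic fundamental groups, whence $\pi_1^{\alg}(Z)$ is finite. Since the profinite completion of the finitely generated abelian group $H_1(Z,\mathbb{Z})$ coincides with the abelianisation of $\pi_1^{\alg}(Z)$, $H_1(Z,\mathbb{Z})$ must itself be finite, and therefore $\tilde{q}(X') = q(Z) = \tfrac{1}{2}\dim_{\mathbb{Q}} H_1(Z,\mathbb{Q}) = 0$. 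Taking the supremum gives $q^{\natural}(X) = 0$ and case (i) applies. The main subtlety is this last reduction, which hinges on the dictionary between quasi-\'etale covers of the normal $X$ and \'etale covers of $X_{\reg}$, the surjectivity of algebraic $\pi_1$ under an open embedding with codimension-one complement, and the passage from finiteness of $\pi_1^{\alg}$ on a smooth model to vanishing of the irregularity via abelianisation.
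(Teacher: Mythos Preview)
Your argument is correct. For condition (i) your proof is essentially identical to the paper's: both base-change along a quasi-\'etale cover $A \to Y$ with $A$ abelian, use equi-dimensionality of $X \to Y$ to ensure the pulled-back cover $W \to X$ is quasi-\'etale, and then bound $\dim A \le \tilde{q}(W) \le q^\natural(X) = 0$. (The paper observes that the normalised fibre product is already irreducible because $\pi$ has irreducible fibres, whereas you pick a dominating component; either works.)

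For condition (ii) the approaches genuinely diverge. You reduce (ii) to (i) by showing abstractly that finiteness of $\pi_1^{\alg}(X_{\reg})$ forces $q^\natural(X)=0$, passing through $\pi_1^{\alg}$ of a smooth model and then Hodge theory ($q(Z) = \tfrac{1}{2}b_1(Z)$). The paper instead argues directly with the cover already constructed: if $\dim Y > 0$, then replacing $A$ by an \'etale self-cover (e.g.\ multiplication by $m$) makes $\deg(p_1 : X' \to X)$ exceed $|\pi_1^{\alg}(X_{\reg})|$, which is impossible for a connected quasi-\'etale cover by purity of branch loci. The paper's route is shorter and stays within algebraic geometry, avoiding the detour through singular homology (which, strictly speaking, requires a Lefschetz-principle reduction to $k = \mathbb{C}$ in your argument, since the paper works over an arbitrary uncountable algebraically closed field of characteristic zero). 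On the other hand, your route isolates the clean implication (ii) $\Rightarrow$ (i), valid independently of the MMP construction, which is a mildly more conceptual statement.
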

\begin{proof}  By Theorem \ref{scalarthm}, for some $s>0$, there is an $f^s$-equivariant equi-dimensional fibration $\pi:X\to Y$ with irreducible fibres and $Y$ being a $\mathbb{Q}$-abelian variety.
Let $A\to Y$ be the finite cover \'etale in codimension one with $A$ an abelian variety.
Denote by $X'$ the normalization of $X\times_Y A$ which is irreducible since $\pi$ has irreducible fibres.
Then the projection $p_1:X'\to X$ is a finite surjective morphism \'etale in codimension one.

Note that $q(A)\le q^\natural(X')=q^\natural(X)$.
So Condition (ii) implies that $\dim(A)=0$.
Assume Condition (i) and $\dim(Y)>0$.
Replacing $A$ by its \'etale cover, we may assume $\deg p_1>|\pi_1^{\alg}(X_{\reg})|$.
Since $p_1$ is  \'etale in codimension one, we get a contradiction by the purity of branch loci.
In particular, both two conditions imply that $A=Y$ is a point and hence $X$ is rationally connected by Theorem \ref{scalarthm}.

Now $(f^s)^\ast|_{\N^1(X)}$ is a scalar multiplication by Theorem \ref{scalarthm}.
\end{proof}


\begin{proposition}\label{cy-prop} Let $f:X\to X$ be a polarized endomorphism of a $\mathbb{Q}$-factorial klt projective variety $X$ with the irregularity $q(X)=0$ (this is the case when $X$ is rationally connected).
Suppose that $f^\ast|_{\N^1(X)}=q\, \id$ for some $q>1$. Then the following are true.
\begin{itemize}
\item[(1)] If the Iitaka $D$-dimension $\kappa(X,F)=0$ for a prime divisor $F$, then $f^{-1}(F)=F$.
\item[(2)] Let $D_i$ $(1 \le i \le s)$ be all the prime divisors with $f^{-1}(D_i) = D_i$ and let $D=\sum\limits_{i=1}^s D_i$. Then $s \le \dim(X) + \rho(X)$ with $\rho(X)$ the Picard number of $X$. The equality holds true only when $K_X+D \sim_{\mathbb{Q}} 0$ and hence $X$ is of Calabi-Yau type.
\item[(3)] We have the ramification divisor $R_f=(q-1)D+\Delta$ for some effective divisor $\Delta$, such that $\Delta$ and $D$ have no common irreducible component and $\kappa(X, \Delta_j) > 0$ for every  irreducible component $\Delta_j$ of $\Delta$.
\item[(4)] We have $-(K_X + D) \sim_{\mathbb{Q}} \frac{1}{q-1} \Delta$. So either $\Delta \ne 0$ and $\kappa(X, -(K_X + D)) > 0$,
or  $K_X + D \sim_\mathbb{Q} 0$ and hence $X$ is of Calabi-Yau type.
\end{itemize}
\end{proposition}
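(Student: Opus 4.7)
The plan is to derive (1), (3), and (4) in a single sweep from the ramification formula, leveraging that $q(X)=0$ together with $X$ being klt (hence having rational singularities by \cite[Theorem 5.22]{KM}) forces $\Pic^0(X)=0$; therefore numerical equivalence of $\mathbb{Q}$-Cartier divisors coincides with $\mathbb{Q}$-linear equivalence on $\N^1(X)$. For (1), the effective divisor $f^\ast F$ satisfies $f^\ast F\equiv qF$ and thus $f^\ast F\sim_{\mathbb{Q}}qF$; since $\kappa(X,F)=0$, one has $h^{0}(X,mqF)=1$ for any $m>0$ with $mqF$ clearing torsion, so $|mqF|=\{mqF\}$ and hence $mf^\ast F=mqF$ as divisors, giving $f^\ast F=qF$ and $f^{-1}(F)=\Supp f^\ast F=F$. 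For (3), $f^{-1}(D_i)=D_i$ forces $f^\ast D_i=e_iD_i$ with $e_i\in\mathbb{Z}_{>0}$, and pairing with $H^{n-1}$ together with $f^\ast|_{\N^1(X)}=q\,\id$ yields $e_i=q$, so $R_f\ge(q-1)D$; each prime component $\Delta_j$ of $\Delta:=R_f-(q-1)D$ differs from every $D_i$ (by maximality of the list), and (1) then gives $\kappa(X,\Delta_j)\ge 1$. For (4), $R_f\equiv-(q-1)K_X$ by combining the ramification formula with $f^\ast|_{\N^1(X)}=q\,\id$; substituting $R_f=(q-1)D+\Delta$ and upgrading $\equiv$ to $\sim_{\mathbb{Q}}$ gives $-(K_X+D)\sim_{\mathbb{Q}}\tfrac{1}{q-1}\Delta$, from which the dichotomy follows.

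The core of the argument is (2). Set $d:=s-\rho(X)$; then $\ker\bigl(\bigoplus_i\mathbb{Q}[D_i]\to\N^1(X)\bigr)$ has dimension at least $d$, and via $\Pic^0(X)=0$ each basis vector lifts to a non-zero rational function $g_k\in k(X)^\ast$ with $\operatorname{div} g_k=\sum_ia_i^{(k)}D_i$ for $k=1,\dots,d$. These $g_1,\dots,g_d$ are multiplicatively independent modulo $k^\ast$. Because $f^\ast D_i=qD_i$, one has $f^\ast g_k=c_kg_k^{q}$ for some $c_k\in k^\ast$, and rescaling each $g_k$ by $c_k^{1/(q-1)}$ (possible since $k$ is algebraically closed) normalizes this to $f^\ast g_k=g_k^{q}$. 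The resulting rational map $\phi:=(g_1,\dots,g_d)\colon X\dashrightarrow\mathbb{G}_m^{d}$ is equivariant for $f$ on the source and the power map $[q]\colon y\mapsto y^q$ on the target, so the Zariski closure $T:=\overline{\phi(X)}\subseteq\mathbb{G}_m^{d}$ is an irreducible closed $[q]$-invariant subvariety not contained in any coset of a proper connected subgroup (by the multiplicative independence). Invoking the structure theorem for $[q]$-periodic subvarieties of $\mathbb{G}_m^{d}$ (Ihara--Serre--Tate / Laurent, in the dynamical form due to Zhang), which identifies such $T$ as a torsion translate of a sub-torus, forces $T=\mathbb{G}_m^{d}$; hence $d=\dim T\le\dim X=n$, giving $s\le n+\rho(X)$.

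For the equality case $s=n+\rho(X)$, one has $d=n$ and $\phi$ is dominant and generically finite onto $\mathbb{G}_m^n\subseteq\mathbb{P}^n$. On a log resolution of $\phi$, the logarithmic ramification formula with respect to the toric boundary of $\mathbb{P}^n$ (whose log canonical class is trivial) produces an effective divisor $R_\phi^{\log}$ with $K_X+D\sim R_\phi^{\log}\ge 0$, while (4) gives $K_X+D\sim_{\mathbb{Q}}-\tfrac{1}{q-1}\Delta\le 0$. A divisor on a projective variety that is simultaneously $\mathbb{Q}$-linearly equivalent to an effective and to an anti-effective divisor must be zero, forcing $K_X+D\sim_{\mathbb{Q}}0$ and $\Delta=0$. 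The Calabi--Yau type conclusion then requires $(X,D)$ to be lc, which is extracted from the $f$-equivariance $f^\ast(K_X+D)=K_X+D$ (which holds when $\Delta=0$) via a standard analysis of $f$-invariant log discrepancies for polarized endomorphisms. The main obstacle is the clean invocation of the dynamical structure theorem for $[q]$-invariant subvarieties of the torus, together with the scalar renormalization to $c_k=1$ and the verification of multiplicative independence; the equality case then becomes a log-ramification bookkeeping exercise built on (4).
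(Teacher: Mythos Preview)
Your treatment of (1), (3), and (4) is essentially identical to the paper's: both use that $q(X)=0$ together with klt (hence rational) singularities to upgrade $\equiv$ to $\sim_{\mathbb{Q}}$, then deduce (1) from the observation that $f^\ast F\sim_{\mathbb{Q}} qF$ with $f^\ast F\neq qF$ forces $\kappa(X,F)>0$, (3) from $f^\ast D_i=qD_i$ and the maximality of the list $\{D_i\}$, and (4) from the ramification formula. For the lc assertion when $\Delta=0$, the paper does not argue directly either: it cites \cite[Theorem 1.4]{BH} or \cite[Theorem 1.3]{Zh-adv}, which is what your ``standard analysis of $f$-invariant log discrepancies'' is pointing to.

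The genuine difference is (2). The paper gives no argument whatsoever and simply invokes \cite[Theorem 1.3]{Zh-adv} as a black box. Your sketch instead reconstructs the torus argument that underlies that reference: build multiplicatively independent units $g_1,\dots,g_d$ from the kernel of $\bigoplus_i\mathbb{Q}[D_i]\to\N^1(X)$, normalize so that $f^\ast g_k=g_k^{q}$, and observe that the closure $T$ of the image of $\phi=(g_1,\dots,g_d)$ is an irreducible $[q]$-stable subvariety of $\mathbb{G}_m^{d}$ not lying in any proper torus coset; Zariski density of periodic (hence torsion) points together with Laurent's theorem then forces $T=\mathbb{G}_m^{d}$, giving $d\le n$. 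For the equality case you combine a logarithmic ramification computation with the anti-effectivity from (4). This route is correct and more transparent than a bare citation, at the cost of importing a nontrivial Diophantine input. One caution: your phrasing ``on a log resolution \ldots\ $K_X+D\sim R_\phi^{\log}\ge 0$'' elides the descent from the resolution back to the singular $X$; it is cleaner to argue directly that the rational $n$-form $\tfrac{dg_1}{g_1}\wedge\cdots\wedge\tfrac{dg_n}{g_n}$ is nonzero and has at worst simple poles along $D$ on the smooth locus, so that $K_X+D$ is linearly equivalent to an effective Weil divisor on the normal $\mathbb{Q}$-factorial variety $X$, which then forces $\Delta=0$ via (4).
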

\begin{proof} We follow the approach of \cite[Claim 3.15]{Zh-comp}.
Since $q(X)=0$, numerical equivalence implies $\mathbb{Q}$-linear equivalence.

(1) If $f^{-1}(F)\neq F$, then $f^\ast F\sim_\mathbb{Q} qF$ but $f^\ast F\neq qF$. So $\kappa(X,F)>0$.

(2) follows from \cite[Theorem 1.3]{Zh-adv}.

(3) Note that $f^\ast D_i = qD_i$ for each $i$. So $R_f=(q-1)D+\Delta$ for some effective divisor $\Delta$. Clearly, $D_i$ is not an irreducible component of $\Delta$ for each $i$. So, for each $j$, $f^{-1}(\Delta_j)\neq \Delta_j$ and hence $\kappa(X, \Delta_j) > 0$ by (1).

(4) By the ramification divisor formula, $K_X=f^\ast K_X+R_f\sim_\mathbb{Q}qK_X+(q-1)D+\Delta$. Therefore, $-(K_X+D)\sim_\mathbb{Q} \frac{1}{q-1}\Delta$. If $\Delta\neq 0$, then $\kappa(X, -(K_X + D)) > 0$ by (3). If $\Delta = 0$, then $K_X+D\sim_\mathbb{Q} 0$. Therefore, $(X,D)$ is log canonical and hence $X$ is of Calabi-Yau type; see \cite[Theorem 1.4]{BH} or \cite[Theorem 1.3]{Zh-adv}.
\end{proof}

\begin{lemma}\label{K+D-eff} Let $X$ be a rationally connected normal projective  variety and $D$ a non-uniruled prime divisor such that $K_X+D$ is $\mathbb{Q}$-Cartier. Then $K_X+D$ is pseudo-effective.
\end{lemma}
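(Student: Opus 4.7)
The plan is to reduce to the smooth case via a log resolution and then derive a contradiction from the assumption that $K_X+D$ is not pseudo-effective by running an MMP and analyzing the resulting Mori fibre space.

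First, I would take a log resolution $\pi:X'\to X$ of the pair $(X,D)$, giving a smooth projective $X'$ (still rationally connected, by birational invariance of that property under our standing countability assumption) and a smooth prime divisor $D':=\pi^{-1}_{*}D$. Then $D'$ is non-uniruled, since uniruledness is a birational invariant and $D$ is non-uniruled. Because push-forward preserves pseudo-effectivity of Weil cycles and $\pi_{*}(K_{X'}+D')=K_X+D$ as Weil divisor classes, and because $K_X+D$ is $\mathbb{Q}$-Cartier, it would suffice to show that $K_{X'}+D'$ is pseudo-effective on $X'$: Lemma \ref{nrn1} then yields $K_X+D\in\PE(X)\cap\N^1(X)=\PEC(X)$.

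Suppose for contradiction that $K_{X'}+D'$ is not pseudo-effective. Since $(X',D')$ is log smooth, hence dlt, I would invoke \cite{BCHM} to run a $(K_{X'}+D')$-MMP with scaling, terminating in a Mori fibre space $\phi:X^{*}\to Z$ along which $-(K_{X^{*}}+D^{*})$ is $\phi$-ample, where $D^{*}$ denotes the birational transform of $D'$. Each divisorial contraction in the MMP contracts a divisor that is covered by rational curves in the associated extremal ray, hence uniruled; since $D^{*}$ inherits non-uniruledness from $D'$, it cannot be contracted. Flips contract no divisor, so $D^{*}$ survives as a prime (and normal, by dlt) divisor on $X^{*}$.

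Finally, I would split into cases according to whether $D^{*}$ is vertical or horizontal for $\phi$. In the vertical case, $D^{*}$ contains a general fibre $F$ of $\phi$ over a general point of $\phi(D^{*})$; such an $F$ is klt Fano (since $-K_F$ is the restriction of the $\phi$-ample divisor $-(K_{X^{*}}+D^{*})$), hence rationally connected by \cite{qZh}, making $D^{*}$ uniruled. In the horizontal case, for a general fibre $F$ the restriction $D_F:=D^{*}\cap F$ is a nonempty divisor on $F$ with $-(K_F+D_F)$ ample, so $(F,D_F)$ is a dlt log Fano pair; Koll\'ar's adjunction gives $(K_F+D_F)|_{D_F}=K_{D_F}+\mathrm{Diff}$ with $\mathrm{Diff}\ge 0$ and $(D_F,\mathrm{Diff})$ klt, hence $-K_{D_F}$ is the sum of the effective $\mathrm{Diff}$ and the ample class $-(K_F+D_F)|_{D_F}$, and so is big. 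A klt variety with big anti-canonical divisor is uniruled (run a $(K_{D_F}+t\mathrm{Diff})$-MMP for small $t>0$ to a Mori fibre space), so $D_F$ is uniruled and thus $D^{*}$ is again uniruled. Both outcomes contradict the non-uniruledness of $D^{*}$. The main obstacle is the singularity bookkeeping through the MMP: dlt-ness must be preserved so that contracted extremal rays are generated by rational curves (ruling out contraction of $D^{*}$), $D^{*}$ must remain irreducible and normal so that adjunction applies, and the different $\mathrm{Diff}$ on $D_F$ must be effective so that $-K_{D_F}$ becomes big in the horizontal case.
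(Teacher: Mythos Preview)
Your reduction to the smooth case via a log resolution matches the paper, which then invokes \cite[Theorem 3.7]{LZ}: if $K_X + D$ is not pseudo-effective there is a dominant rational map $X \dashrightarrow Y$ with $D$ \emph{birationally} dominating $Y$, so $Y$ is non-uniruled; since $X$ is rationally connected and dominates $Y$, this is a contradiction. You instead try to show directly that $D^*$ becomes uniruled on the Mori fibre space, and in doing so you never actually use the rational connectedness of $X$ in the endgame.

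That is exactly where there is a gap. Your horizontal-case adjunction argument tacitly assumes $\dim F \ge 2$, so that $D_F = D^* \cap F$ is positive-dimensional and Koll\'ar's adjunction says something. When the Mori fibre space $\phi$ has one-dimensional fibres, $D_F$ is a finite set of points and your argument is vacuous. This case cannot be repaired by the same mechanism, and the unused hypothesis is essential there: take $X = \mathbb{P}^1 \times C$ with $g(C) \ge 1$ and $D = \{\mathrm{pt}\} \times C$; then $D$ is non-uniruled, the $(K_X+D)$-MMP is already the second projection $X \to C$, $D$ is horizontal with $D_F$ a single point, and $K_X + D$ is \emph{not} pseudo-effective. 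To close the argument when $\dim F = 1$ you must bring in rational connectedness: from $-K_{X^*} \cdot F = 2$ and $-(K_{X^*}+D^*) \cdot F > 0$ one gets $D^* \cdot F = 1$, so $D^* \to Z$ is birational; hence $Z$ is non-uniruled yet dominated by the rationally connected $X^*$, which is precisely the paper's contradiction.
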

\begin{proof}
Replacing $X$ by a log resolution $X'$ of the pair $(X, D)$ and $D$ by its proper transform $D'$,
we may assume that both $X$ and $D$ are smooth, so that
the argument in \cite[Theorem 3.7]{LZ} is applicable. Suppose that $K_X+D$ is not pseudo-effective.
Then there is a dominant rational map $X\dashrightarrow Y$ such that $D$ birationally dominates $Y$. In particular, $Y$ is non-uniruled. This contradicts that $X$ is rationally connected.
\end{proof}
%
%

\begin{proof}[Proof of Theorem \ref{thm-smooth-rc}] Since $X$ is smooth, any quasi-\'etale morphism onto $X$ is \'etale by purity of branch loci.
Since $X$ is smooth and rationally connected, $X$ has no non-trivial \'etale cover; see \cite[Corollary 4.18]{De}.
In particular, $q^\natural(X)=q(X)=0$.
Then (1) follows from Lemma \ref{smooth-scalar}.
The assertions (2), (3), and (4) cases (i) and (ii) are straightforward by Proposition \ref{cy-prop}.

For (4) case (iii), by Lemma \ref{K+D-eff}, $K_X+D_1$ is pseudo-effective and hence $K_X+D=-\frac{1}{q-1}\Delta$ is pseudo-effective by Proposition \ref{cy-prop}(4).
So $\Delta=0$ and $D=D_1$. Now $K_X+D\sim_\mathbb{Q}0$ and $X$ is of Calabi-Yau type by Proposition \ref{cy-prop}(4).
\end{proof}
\begin{proof}[Proof of Corollary \ref{main-cor-k}] (1) Since $X$ is not $Q$-abelian, by Theorem \ref{scalarthm},
$K_X$ is not pseudo-effective. Further, replacing $f$ by a positive power, we may run $f$-equivariant MMP $$X=X_1\dashrightarrow \cdots \dashrightarrow X_i \dashrightarrow \cdots \dashrightarrow X_r=Y,$$ such that $X_{r-1}\to X_r$ is a Fano contraction, $Y$ is $Q$-abelian, and $f|_Y$ is polarized.
Since $(X,f)$ is primitive, $Y$ is a point and $X_{i-1}\dashrightarrow X_i$ is either a divisorial contraction or a flip for each $i<r$.
Clearly, $X_{r-1}$ is then a Fano variety of Picard number one,
so $X_{r-1}$ and hence $X$ are rationally connected; see \cite{qZh}.
By Theorem \ref{scalarthm}(4), $f^\ast|_{\N^1(X)}$ is a scalar multiplication. Thus Proposition \ref{cy-prop} is applicable.

(2) By taking pullback, we may regard $\N^1(X_i)$ as a subspace of $\N^1(X)$ for each $i$.
Let $0<i_1<i_2<\cdots<i_s<r$ ($s$ can be taken as $0$) be all the indexes such that $X_{i}\to X_{i+1}$ is a divisorial contraction if $i=i_t$ for some $1\le t\le s$.
Denote by $E_t$ the exceptional divisor of $X_{i_t}\to X_{i_t+1}$.
Note that $\N^1(X_{i_t+1})$ is a codimension $1$ subspace of $\N^1(X_{i_t})$ by the cone theorem and $-E_t$ is relative ample over $X_{i_t+1}$ by Lemma \ref{kmlem}. So $\N^1(X_{i_t})$ is spanned by $\N^1(X_{i_t+1})$ and $E_t$.
Together, $\N^1(X)$ is spanned by $\N^1(X_{r-1})$ and those $E_t$ ($E_t$ may not be a prime divisor in $\N^1(X)$).

Note that the MMP is $f^s$-equivariant for some $s>0$. So replacing $f$ by a positive power, we may assume
$f^{-1}(E_t(k))=E_t(k)$ for each irreducible component $E_t(k)$ of $E_t$.
Let $R_f$ be the ramification divisor of $f$, $R_{f_{r-1}}$ the ramification divisor of $f_{r-1}$, and $R'$ the strict transform of $R_{f_{r-1}}$ on $X$.
Clearly, $R_f\ge R'$.
If $R_{f_{r-1}}=0$, then $K_{X_{r-1}}=f^*K_{X_{r-1}}\equiv qK_{X_{r-1}}$ as we see in (1), and hence $(q-1)K_{X_{r-1}}\equiv 0$.
This is absurd because $X_{r-1}$ is a Fano variety.
So $R_{f_{r-1}}\neq 0$ and hence $R'\neq 0$.
In particular, $\N^1(X)$ is spanned by $R'$ and those $E_t$, since $X_{r-1}$ is of Picard number $1$.

Let $A$ be an ample divisor in $\N^1(X)$.
We may write $A=\sum\limits_{t=1}^s a_tE_t+bR'$ for some $a_t, b\in \mathbb{R}$.
Note that $R_f\ge R'$ and $eR_f\ge \sum\limits_{t=1}^s E_t$ for some $e\gg 1$ since $R_f\ge E_t(k)$ for each $t$ and $k$.
Then $cR_f=A+F$ for some pseudo-effective $\R$-Cartier divisor $F$ if $c\gg 1$.
So $R_f$ is big and hence $-K_X \equiv \frac{1}{q-1}R_f$ is big.
\end{proof}

\par \noindent
{\bf Acknowledgement.}

The authors
would like to thank colleagues
Tien-Cuong Dinh for suggesting the proof of Lemma \ref{R-int} which is very motivating, Chenyang Xu for suggesting the consideration of quotients of products of abelian varieties and projective spaces in Example \ref{ex-xu}, and the referee of the earlier version for very careful reading and constructive suggestions.
The second-named author would like to thank the following institutes and friends for the
very warm hospitality and support
during the preparation of the paper:
Max-Planck-Institute for Math.~at Bonn,
IHES at Paris, Bayreuth Univ.~(arranged by Fabrizio Catanese and Thomas Peternell),
Univ.~of Tokyo (arranged by Keiji Oguiso)
and Cambridge Univ.~(arranged by Caucher Birkar);
he is also supported by an ARF of NUS.

\end{document}